\theoremstyle{plain}
\newtheorem{theo}{Th\'{e}or\`{e}me}[section]
\newtheorem{cor}[theo]{Corollaire}
\newtheorem{prop}[theo]{Proposition}
\newtheorem{lem}[theo]{Lemme}
\theoremstyle{definition}
\newtheorem{defi}[theo]{D\'{e}finition}
\newtheorem{exemple}[theo]{Exemple}
\newtheorem{remarque}[theo]{Remarque}
\newtheorem{notation}[theo]{Notation}
\newcommand{\prive}{-} 
\newcommand{\bs}{\backslash}
\newcommand{\tors}{_{\mathrm{tors}}} 
\newcommand{\simfleche}{\overset{\simeq}{\longrightarrow}} 
\newcommand{\N}{\mathbf{N}} 
\newcommand{\Z}{\mathbf{Z}} 
\newcommand{\Q}{\mathbf{Q}} 
\newcommand{\R}{\mathbf{R}} 
\newcommand{\C}{\mathbf{C}} 
\newcommand{\F}{\mathbf{F}} 
\newcommand{\Fp}{\F_{p}} 
\newcommand{\Fq}{\F_{q}} 
\newcommand{\Kinf}{K_{\infty}} 
\newcommand{\Oinf}{O_{\infty}} 
\newcommand{\mm}{\mathfrak{m}} 
\newcommand{\nn}{\mathfrak{n}} 
\newcommand{\pp}{\mathfrak{p}} 
\newcommand{\qq}{\mathfrak{q}} 
\newcommand{\PP}{\mathbf{P}} 
\newcommand{\T}{\mathcal{T}} 
\newcommand{\ptes}{\mathbf{ptes}} 
\newcommand{\TT}{\mathbf{T}} 
\newcommand{\ensmatricesmerel}{\mathcal{S}} 
\newcommand{\matrice}[4]{\mathchoice{\begin{pmatrix} #1 & #2 \\ #3 & #4 \end{pmatrix}}{\bigl( \begin{smallmatrix}  #1&#2\\ #3&#4 \end{smallmatrix} \bigr)}{\bigl( \begin{smallmatrix}  #1&#2\\ #3&#4 \end{smallmatrix} \bigr)}{\bigl( \begin{smallmatrix}  #1&#2\\ #3&#4 \end{smallmatrix} \bigr)}} 
\newcommand{\SL}{\mathrm{SL}} 
\newcommand{\GL}{\mathrm{GL}} 
\newcommand{\PGL}{\mathrm{PGL}} 
\newcommand{\ZZ}{\mathrm{Z}} 
\newcommand{\G}{\mathrm{G}} 
\newcommand{\K}{\mathcal{K}} 
\newcommand{\I}{\mathcal{I}} 
\newcommand{\A}{\mathbf{A}} 
\newcommand{\OO}{\mathbf{O}} 
\newcommand{\Hom}{\mathrm{Hom}} 
\newcommand{\End}{\mathrm{End}} 
\newcommand{\smod}{\mathbf{M}} 
\newcommand{\faut}{\mathbf{H}} 
\newcommand{\elemenroul}{\mathbf{e}} 
\newcommand{\denomenroul}{d_{\elemenroul}} 
\newcommand{\ensnewforms}{\mathcal{F}} 
\newcommand{\ensorbites}{\mathcal{E}} 
\newcommand{\ordreeisenstein}{n_{E}} 
\newcommand{\classeA}[2]{\left\{ #1 : #2 \right\}} 
\newcommand{\baseexplicite}{\mathcal{B}} 
\newcommand{\D}{C} 
\title{Une base explicite de symboles modulaires sur\\ les corps de fonctions}
\author{C\'{e}cile Armana\thanks{Universit\'{e} de Franche--Comt\'{e}, Laboratoire de Math\'{e}matiques de Besan\c{c}on, CNRS UMR 6623, Facult\'{e} des Sciences et Techniques, 16 route de Gray, 25030 Besan\c{c}on, France -- \texttt{cecile.armana@univ-fcomte.fr}}}
\date{2 novembre 2011}
\begin{document}

\maketitle

\begin{abstract}
Modular symbols for the subgroup $\Gamma_0(\nn)$ of $\GL_{2}(\Fq[T])$ have been defined by Teitelbaum. They have a presentation given by a finite number of generators and relations, in a formalism similar to Manin's for classical modular symbols. We completely solve the relations and get an explicit basis of generators when $\nn$ is a prime ideal of odd degree. As an application, we give a non-vanishing statement for $L$-functions of certain automorphic cusp forms for $\Fq(T)$. The main statement also provides a key-step for a result towards the uniform boundedness conjecture for Drinfeld modules of rank $2$.
\end{abstract}

\section{Introduction}

Soient $A = \Fq[T]$ l'anneau de polyn\^{o}mes sur un corps fini $\Fq$ \`{a} $q$ \'{e}l\'{e}ments et $K = \Fq(T)$ son corps des fractions. Dans \cite{teitelbaum-modularsymbols}, J.~Teitelbaum a introduit les symboles modulaires de poids $2$ pour un sous-groupe de congruence de $\GL_{2}(A)$. Parmi leurs applications, mentionnons des formules pour les valeurs sp\'{e}ciales de fonctions $L$ de certaines formes automorphes pour $K$ \cite{tan-modularelements,tan-rockmore,teitelbaum-modularsymbols} et une conjecture de z\'{e}ro exceptionnel pour les courbes elliptiques sur $K$ (\'{e}nonc\'{e}e dans \cite{teitelbaum-modularsymbols}, \'{e}tablie ind\'{e}pendamment par Hauer--Longhi \cite{hauer-longhi-exczero} et P\'{a}l \cite{pal-exczero}).

Soit $\Gamma_0(\nn)$ le sous-groupe de $\GL_2(A)$ form\'{e} des matrices triangulaires sup\'{e}rieures modulo un id\'{e}al non nul $\nn$ de $A$. On note $\smod_\nn$ le groupe ab\'{e}lien des symboles modulaires pour $\Gamma_0(\nn)$ \`{a} coefficients dans $\Z$ (sa d\'{e}finition sera rappel\'{e}e plus loin). Teitelbaum a donn\'{e} une pr\'{e}sentation de $\smod_\nn$ par un nombre fini de g\'{e}n\'{e}rateurs et leurs relations, que nous rappelons. Consid\'{e}rons la droite projective $\PP^{1}(A/\nn)$ sur l'anneau fini $A/\nn$~; on note ses \'{e}l\'{e}ments $(u:v)$. D'apr\`{e}s Teitelbaum, le groupe ab\'{e}lien $\smod_\nn$ est alors isomorphe au quotient du groupe ab\'{e}lien libre $\Z[\PP^{1}(A/\nn)]$ par les relations
\begin{align}\label{eq-relationsMT}
\begin{split}
  (u:v)+(-v:u) &= 0 \\
  (u:v)+(v:-u-v)+(-u-v:u) &=0\\
  (u:v)-(\delta_1 u:\delta_2 v) &= 0
\end{split}
 \end{align}
pour tout $\delta_1,\delta_2 \in \Fq^{\times}$ et $(u:v) \in \PP^{1}(A/\nn)$. Notons $\xi$ l'isomorphisme du quotient vers $\smod_\nn$. Les g\'{e}n\'{e}rateurs $\xi(u:v)$ de $\smod_\nn$ sont appel\'{e}s \emph{symboles de Manin--Teitelbaum}. Cette pr\'{e}sentation finie est en tout point similaire \`{a} celle donn\'{e}e par Manin \cite{manin-symbolesmodulaires} pour les symboles modulaires classiques associ\'{e}s \`{a} $\SL_2(\Z)$.

Les symboles modulaires, classiques ou sur $K$, correspondent essentiellement au premier groupe d'homologie relative aux pointes d'une courbe modulaire (ou ici du graphe combinatoire $\Gamma_0(\nn) \bs \T$, o\`{u} $\T$ est l'arbre de Bruhat--Tits de $\PGL_2(\Fq((1/T)) )$, qui est apparent\'{e} \`{a} la courbe modulaire de Drinfeld $X_0(\nn)$). Or une particularit\'{e} des relations~\eqref{eq-relationsMT} est d'avoir une forme ind\'{e}pendante de $\nn$. Ainsi les pr\'{e}sentations \`{a} la Manin d\'{e}crivent ces groupes d'homologie sans la connaissance pr\'{e}alable d'un domaine fondamental pour le sous-groupe de congruence. Ces pr\'{e}sentations se pr\^{e}tent particuli\`{e}rement bien \`{a} l'impl\'{e}mentation des symboles modulaires sur machine. Elles constituent le socle d'algorithmes de calcul des formes modulaires ou automorphes qui leurs correspondent (voir Cremona \cite{cremona-algoellcurves2nded} et Stein \cite{stein-modformcomp} pour un aper\c{c}u dans le cas des symboles et formes modulaires 
classiques).

\subsection{Base explicite de symboles de Manin--Teitelbaum}
Dor\'{e}navant, prenons $\nn = \pp$ premier. Dans ce travail, nous r\'{e}solvons compl\`{e}tement la pr\'{e}sentation de $\smod_\pp$ dans un cas assez g\'{e}n\'{e}ral ($\pp$ de degr\'{e} impair) et explicitons une base de $\smod_\pp$ extraite des g\'{e}n\'{e}rateurs.
\begin{theo}\phantomsection\label{theo-basesm-intro}
Soit $\pp$ un id\'{e}al de $A$ de degr\'{e} impair $d$. Les symboles de Manin--Teitelbaum $\xi(1:0)$ et $\xi(u:v)$, o\`{u} $u$ et $v$ parcourent les polyn\^{o}mes unitaires de $A$, premiers entre eux et tels que $\deg v < \deg u  < d/2$, forment une base de $\smod_\pp$ sur $\Z$.
\end{theo}
C'est un cas particulier du th\'{e}or\`{e}me~\ref{theo-baseexplicite}. Il est compl\'{e}t\'{e}  par le th\'{e}or\`{e}me~\ref{theo-resolexplicite}, qui exprime n'importe quel symbole de Manin--Teitelbaum dans cette base. Ces deux \'{e}nonc\'{e}s peuvent donc se substituer \`{a} la pr\'{e}sentation de Teitelbaum. De plus, en retirant $\xi(1:0)$ de la liste, on obtient une base du sous-espace parabolique $\smod_\pp^0$. Dans le cas o\`{u} $d$ est pair, mentionnons que la famille du th\'{e}or\`{e}me~\ref{theo-basesm-intro} est libre mais ne poss\`{e}de pas suffisamment d'\'{e}l\'{e}ments pour \^{e}tre une base.

Pour les symboles modulaires de poids $2$ pour $\Gamma_0(n) \subset \SL_2(\Z)$, Manin a donn\'{e} une pr\'{e}sentation tr\`{e}s similaire comme quotient sans torsion du $\Z$-module libre $\Z[\PP^{1}(\Z / n \Z)]$ par des relations \`{a} deux et trois termes (\cite[th.~2.7]{manin-symbolesmodulaires}). Cependant on ne sait la r\'{e}soudre qu'au cas par cas, c'est-\`{a}-dire en fixant une valeur num\'{e}rique pour $n$. Les th\'{e}or\`{e}mes~\ref{theo-basesm-intro} et \ref{theo-resolexplicite} n'ont donc pas d'\'{e}quivalents pour les symboles modulaires classiques et t\'{e}moignent d'une situation nettement plus favorable sur les corps de fonctions.

La base donn\'{e}e par le th\'{e}or\`{e}me~\ref{theo-basesm-intro} ne d\'{e}pend que de $d = \deg \pp$. De fait, sa d\'{e}monstration passe par un <<~mod\`{e}le~>> de $\smod_\pp$ dans lequel nous sommes capables de r\'{e}soudre les relations (il s'agit de remplacer $\PP^{1}(A/\pp)$ par une troncature de la droite projective $\PP^{1}(A)$). On met aussi en \'{e}vidence une d\'{e}composition naturelle de $\smod_\pp$ en somme directe de sous-espaces explicites qui ne d\'{e}pendent que de $d$ (section~\ref{soussection-decomp})~; l\`{a} encore, on ne conna\^{i}t pas de r\'{e}sultat analogue pour les symboles modulaires classiques. Cette construction et la preuve du th\'{e}or\`{e}me~\ref{theo-basesm-intro} sont pr\'{e}sent\'{e}es dans la section~\ref{section-baseexplicite}. On utilisera de fa\c{c}on essentielle l'in\'{e}galit\'{e} $\deg (u+v) \leq \max (\deg u, \deg v)$ pour $u,v \in A$, qui est de nature non-archim\'{e}dienne. Ces arguments ne semblent donc pas s'adapter de fa\c{c}on na\"{i}ve \`{a} la pr\'{e}
sentation de Manin des symboles modulaires classiques (cf. remarque~\ref{rem-nonarch}).

La restriction dans le th\'{e}or\`{e}me~\ref{theo-basesm-intro} aux niveaux premiers $\pp$ de degr\'{e} impair est de nature technique. Le m\'{e}canisme de la preuve est suffisamment g\'{e}n\'{e}ral pour fournir des \'{e}nonc\'{e}s de m\^{e}me nature si $\deg \pp$ est pair, $\pp$ non premier ou encore pour d'autres sous-groupes de congruences de $\GL_2(A)$, au prix de certaines complications.

Passons aux premi\`{e}res applications des th\'{e}or\`{e}mes~\ref{theo-basesm-intro} et \ref{theo-resolexplicite}. D'abord ils devraient simplifier le calcul des symboles modulaires pour $\Fq(T)$ sur machine car ils dispensent de l'\'{e}tape pr\'{e}liminaire de r\'{e}solution de la pr\'{e}sentation. D'un point de vue th\'{e}orique, la base explicite donne aussi la structure de $\smod_\pp^0$ comme module pour l'alg\`{e}bre de Hecke lorsque $\deg \pp= 3$~: il est isomorphe \`{a} l'id\'{e}al d'Eisenstein (proposition~\ref{prop-basethetapenrouldegre3}).

\subsection{Ind\'{e}pendance lin\'{e}aire d'op\'{e}rateurs de Hecke dans \texorpdfstring{$\smod_\pp$}{Mp} et non annulation de fonctions \texorpdfstring{$L$}{L}}

Soit $\faut_\pp(\C)$ l'espace vectoriel des cocha\^{i}nes harmoniques paraboliques pour $\Gamma_0(\pp)$ \`{a} valeurs dans $\C$. D'apr\`{e}s Drinfeld et le th\'{e}or\`{e}me d'approximation forte (\cite[proposition~10.3]{drinfeld-ellipticmodules}, \cite[section 4]{gekeler-reversat}), il correspond \`{a} un certain espace de formes automorphes pour $K$, que nous d\'{e}crivons maintenant. Notons $\A$ l'anneau des ad\`{e}les de $K$, $\OO$ son anneau des entiers, $\OO = \OO_{f} \times \Oinf$ avec $\Oinf = \Fq[[1/T]]$. Soit $\K_{0}(\pp)_{f}$ le sous-groupe ouvert compact des matrices de $\GL_2(\OO_f)$ qui sont triangulaires sup\'{e}rieures modulo $D_\pp$, o\`{u} $D_{\pp}$ est le diviseur positif de $K$ associ\'{e} \`{a} $\pp$. Soient $\I$ le sous-groupe d'Iwahori de $\GL_2(\Oinf)$ et $\ZZ(\Kinf)$ le centre de $\GL_2(\Kinf)$ avec $\Kinf = \Fq((1/T))$. Alors $\faut_\pp(\C)$ s'identifie \`{a} l'espace des fonctions
\[
 \GL_2(K) \bs \GL_2(\A) /( \K_{0}(\pp)_{f} \times \I \cdot \ZZ(\Kinf)) \to \C
\]
qui sont paraboliques et sp\'{e}ciales en $\infty$, au sens de Drinfeld. Pour $F$ dans $\faut_\pp(\C)$, la fonction $L(F,s)$ est un polyn\^{o}me en $q^{-s}$ ($s\in \C$). Elle satisfait une \'{e}quation fonctionnelle dont le centre de sym\'{e}trie est $s=1$. Ces formes automorphes donnent lieu \`{a} un th\'{e}or\`{e}me de modularit\'{e} pour les courbes elliptiques sur $K$, cons\'{e}quence des travaux de Grothendieck, Deligne, Jacquet--Langlands et Drinfeld, pour lequel on renvoie \`{a} \cite{gekeler-reversat}.

Teitelbaum a mis en \'{e}vidence un accouplement entre $\faut_\pp(\C)$ et le sous-espace parabolique $\smod_\pp^{0}(\C) = \smod_\pp^0 \otimes_{\Z} \C$. Il est compatible aux op\'{e}rateurs de Hecke et parfait sur $\C$. La forme lin\'{e}aire $F \mapsto (q-1) L(F,1)$ sur $\faut_\pp(\C)$ d\'{e}finit alors un symbole modulaire $\elemenroul$ dans $\smod_\pp^{0}(\C)$. Par analogie avec \cite{mazur-eisenstein}, on l'appelle \emph{\'{e}l\'{e}ment d'enroulement} (cf. section~\ref{sousection-elemenroul}, notamment pour un exemple).

Dans la section~\ref{section-actionhecke}, on exprime l'action des op\'{e}rateurs de Hecke $T_\mm$, o\`{u} $\mm$ est un id\'{e}al de $A$, uniquement en termes de symboles de Manin--Teitelbaum (th\'{e}or\`{e}me~\ref{th-actionheckesymbolesmanin}). Cette formule, conjointement \`{a} la famille libre de symboles modulaires du th\'{e}or\`{e}me~\ref{theo-baseexplicite}, donne un \'{e}nonc\'{e} d'ind\'{e}pendance lin\'{e}aire d'op\'{e}rateurs de Hecke en l'\'{e}l\'{e}ment d'enroulement.
\begin{theo}\phantomsection\label{theo-indlineintro}
Si $\pp$ est de degr\'{e} $\geq 3$ et $r$ est la partie enti\`{e}re de $(\deg(\pp)-3)/2$, alors la famille $\{ T_{\mm} \elemenroul \}_{\deg \mm \leq r }$ est libre sur $\Z$.
\end{theo}
C'est un cas particulier du th\'{e}or\`{e}me~\ref{th-indlinopHeckeene}. Cet \'{e}nonc\'{e} est \`{a} rapprocher de Merel \cite[prop.~3]{merel-torsion}, Parent \cite[prop.~1.9]{parent-torsion} et VanderKam \cite[th.~0.1] {vanderkam-linearindependence} pour les symboles modulaires classiques. De telles estimations ont jou\'{e} un r\^{o}le central dans la borne uniforme pour la torsion des courbes elliptiques sur les corps de nombres (\cite{merel-torsion}, \cite{parent-torsion} pour une version effective).

De m\^{e}me, le th\'{e}or\`{e}me~\ref{th-indlinopHeckeene} est l'argument-cl\'{e} d'un r\'{e}sultat vers une borne uniforme pour la torsion des modules de Drinfeld de rang~$2$, conjectur\'{e}e par Poonen. En suivant l'approche de Mazur et Merel, le th\'{e}or\`{e}me~\ref{th-indlinopHeckeene}\ref{th-indlinopHeckeene-modp} permet d'\'{e}tablir une propri\'{e}t\'{e} d'immersion formelle puis une borne uniforme sous certaines conditions (essentiellement une dualit\'{e} entre formes modulaires de Drinfeld et alg\`{e}bre de Hecke). Ce r\'{e}sultat est paru s\'{e}par\'{e}ment dans \cite{armana-torsionpreprint}. 

Pr\'{e}cisons les diff\'{e}rences de m\'{e}thode avec l'ind\'{e}pendance lin\'{e}aire d'op\'{e}rateurs de Hecke classiques prouv\'{e}e dans \cite[prop.~1.9]{parent-torsion}. L'argument combinatoire de Parent utilisait un graphe encodant les relations de Manin. Sa transposition \`{a} $\Fq(T)$ semble poser quelque difficult\'{e} par la pr\'{e}sence des relations $(u:v)-(\delta_1 u : \delta_2 v)$ (de fait, la preuve s'adapte sans encombre pour $q=2$ et, moyennant un raffinement, \`{a} $q \in \{ 3, 5 \}$ mais nous n'avons pu l'\'{e}tendre au-del\`{a}). Nos th\'{e}or\`{e}mes~\ref{theo-basesm-intro} et \ref{theo-indlineintro} reposent eux aussi sur la pr\'{e}sentation de Manin mais le m\'{e}canisme de la preuve est compl\`{e}tement diff\'{e}rent, peut-\^{e}tre plus simple. Parent avait aussi recours, pour conclure, \`{a} un r\'{e}sultat de th\'{e}orie analytique des nombres tandis qu'ici nos arguments restent de nature purement alg\'{e}brique.

Une autre cons\'{e}quence du th\'{e}or\`{e}me~\ref{theo-indlineintro} que nous souhaitons mettre en \'{e}vidence concerne la non-annulation de fonctions $L$ des formes automorphes paraboliques de Drinfeld. Puisque $\pp$ est premier, l'espace $\faut_\pp(\C)$ poss\`{e}de une base $\ensnewforms_\pp$ de formes primitives pour l'alg\`{e}bre de Hecke. On minore le nombre de celles dont la fonction $L$ ne s'annule pas en $s=1$.
\begin{theo}\phantomsection\label{theo-minorationfaut}
Si $\pp$ est de degr\'{e} $\geq 3$ et $r$ est la partie enti\`{e}re de $(\deg(\pp)-3)/2$, on a 
\[
 \# \{ F \in \ensnewforms_{\pp} \mid L(F,1) \neq 0 \} \geq \frac{q^{r+1}-1}{q-1} \geq \frac{(q^2-1)^{1/2}}{q^2}  (\# \ensnewforms_{\pp})^{1/2}.
\]
\end{theo}
Sa preuve sera donn\'{e}e fin de section~\ref{section-indlin}. L'exposant $1/2$ pour la dimension de l'espace est meilleur que ceux de Parent ($1/6$) et VanderKam ($1/2 + \varepsilon$ pour tout $\varepsilon >0$) pour les formes modulaires classiques. Pour ces derni\`{e}res, la th\'{e}orie analytique des nombres fournit m\^{e}me des estimations lin\'{e}aires en la dimension de l'espace, comme dans Kowalski--Michel \cite{kowalski-michel-analyticrankJ0} et Iwaniec--Sarnak \cite{iwaniec-sarnak-nonvanishing}. Pour les formes automorphes de $\faut_\pp(\C)$, on peut s'attendre \`{a} une borne lin\'{e}aire, dont on ne dispose pas actuellement \`{a} notre connaissance.

\section{Notations}

Soient $q$ une puissance d'un nombre premier $p$ et $\Fq$ (resp. $\Fp$) un corps fini \`{a} $q$ (resp. $p$) \'{e}l\'{e}ments. On munit l'anneau $A=\Fq[T]$ en l'ind\'{e}termin\'{e}e $T$ du degr\'{e} usuel $\deg$ avec la convention $\deg 0 = -\infty$. Le \emph{degr\'{e}} d'un id\'{e}al non nul de $A$ est celui de l'un de ses g\'{e}n\'{e}rateurs. On appellera \emph{premiers de $A$} les id\'{e}aux premiers non nuls de $A$.

Soient $K=\Fq(T)$ le corps des fractions de $A$ et $\infty$ sa place non-archim\'{e}dienne donn\'{e}e par $\pi = 1/T$. Par la suite, la notation $\infty$ d\'{e}signera aussi un bout ou une pointe de l'arbre de Bruhat--Tits, ou encore le point \`{a} l'infini dans $\PP^{1}$, mais d'apr\`{e}s le contexte il n'y aura pas de confusion possible. Soit $\Kinf = \Fq((\pi))$ le compl\'{e}t\'{e} de $K$ en $\infty$.

Le sch\'{e}ma en groupes $\GL(2)$ est not\'{e} $\G$ et son centre $\ZZ$. Dans les sections~\ref{section-cochaines}, \ref{section-sm} (rappels) et \ref{section-actionhecke} (action de Hecke sur les symboles de Manin--Teitelbaum), on travaillera avec le sous-groupe de congruence $\Gamma_0(\nn) \subset \G(A)$ pour un id\'{e}al propre $\nn$ de $A$. Ailleurs on supposera en outre $\nn = \pp$ id\'{e}al \emph{premier}. On notera parfois $\Gamma$ ce sous-groupe de congruence.

Pour $P,Q$ dans $A$, $(P)$ est l'id\'{e}al engendr\'{e} par $P$ et $P \mid Q$ signifie $P$ divise $Q$. Les lettres gothiques d\'{e}signeront des id\'{e}aux de $A$.

\section{Cocha\^{i}nes harmoniques paraboliques}\label{section-cochaines}

\subsection{L'arbre de Bruhat--Tits}\label{soussection-arbreBT}

Soit $\Oinf=\Fq[[\pi]]$ l'anneau des entiers $\pi$-adiques. Le sous-groupe d'Iwahori $\I$ de $\G(\Oinf)$ est form\'{e} des matrices qui sont triangulaires sup\'{e}rieures modulo $\pi$. L'arbre de Bruhat--Tits $\T$ de $\PGL_2(\Kinf)$ est le graphe $(q+1)$-r\'{e}gulier dont l'ensemble des sommets est $X(\T) = \G(\Kinf) / \G(\Oinf) \cdot \ZZ(\Kinf)$, celui des ar\^{e}tes orient\'{e}es est $Y(\T) = \G(\Kinf) / \I \cdot \ZZ(\Kinf)$ et la surjection canonique $Y(\T) \to X(\T)$ associe \`{a} chaque ar\^{e}te son origine (\cite{serre-arbres,gekeler-reversat}).

Les bouts de $\T$ sont en bijection avec $\PP^{1}(\Kinf)$. Cette bijection est toutefois non canonique~: on prendra la convention de \cite[1.6]{gekeler-reversat}. Le bout $\infty = (1:0) \in \PP^{1}(K)$ est alors repr\'{e}sent\'{e} par la demi-droite donn\'{e}e par les images de $ \{ \matrice{\pi^k}{0}{0}{1} \}_{k \leq 0}$ dans $Y(\T)$~; de m\^{e}me, $0 = (0:1)$ est repr\'{e}sent\'{e} par $\{ \matrice{\pi^k}{0}{0}{1} \}_{k \geq 0}$.

Le groupe $\G(\Kinf)$ op\`{e}re par multiplication \`{a} gauche sur $\T$. Tout sous-groupe de congruence $\Gamma$ de $\G(A)$ op\`{e}re sur $\T$ en pr\'{e}servant la structure simpliciale. On dispose alors du graphe quotient $\Gamma \bs \T$, dont l'ensemble des sommets est $\Gamma \bs X(\T)$ et celui des ar\^{e}tes orient\'{e}es est $\Gamma \bs Y(\T)$. D'apr\`{e}s Serre \cite[II, th\'{e}or\`{e}me~9]{serre-arbres}, ce graphe $\Gamma \bs \T$ est la r\'{e}union, disjointe sur les ar\^{e}tes, d'un graphe fini et d'un ensemble fini de demi-droites disjointes et index\'{e}es par les \'{e}l\'{e}ments de $\Gamma \bs \PP^{1}(K)$. On appelle ces demi-droites les \emph{pointes} de $\Gamma \bs \T$ et on note $\ptes$ leur ensemble. Pour la d\'{e}termination explicite de tels <<~domaines fondamentaux~>> de $\Gamma \bs \T$, on pourra consulter Gekeler \cite{gekeler-automorpheformen} et Gekeler--Nonnengardt \cite{gekeler-nonnengardt}.

\subsection{Les cocha\^{i}nes harmoniques paraboliques}

Soit $R$ un anneau commutatif. Les cocha\^{i}nes harmoniques paraboliques pour $\Gamma = \Gamma_0(\nn)$ \`{a} valeurs dans $R$ sont certaines fonctions sur les ar\^{e}tes de $\T$ se factorisant en applications $\Gamma \bs Y(\T) \to R$ \`{a} support fini (voir \cite[section 3]{gekeler-reversat} pour leur d\'{e}finition). Leur $R$-module sera not\'{e} $\faut_\nn(R)$, ou encore $\faut(R)$~; lorsque $R=\Z$, on le note $\faut_\nn$ ou encore $\faut$.

Soit $g$ le nombre de cycles ind\'{e}pendants du graphe $\Gamma \bs \T$, qui est aussi le genre de la courbe modulaire de Drinfeld $X_{\Gamma}$ associ\'{e}e \`{a} $\Gamma$ (\cite[th\'{e}or\`{e}me~2]{drinfeld-ellipticmodules}, \cite[section 4]{gekeler-reversat}). Si $R$ est sans torsion sur $\Z$, on a un isomorphisme canonique $\faut \otimes_{\Z} R  \simeq  \faut(R)$ entre $R$-modules libres de rang $g$. Les formules suivantes de Gekeler donnent la valeur de $g$ si $\nn$ est premier de degr\'{e} $d$~:
\begin{equation}\label{eq-genre}
g= \begin{cases} 
\frac{q^{d}-q^2}{q^2 -1} & \text{si $d$ est pair ;}\\
\frac{q^{d}-q}{q^2-1} & \text{si $d$ est impair} \end{cases}
\end{equation}
(\cite[th.~3.4.18]{gekeler-these})~; en particulier, $g$ est non nul d\`{e}s que $d \geq 3$. 

Toute cocha\^{i}ne $F$ de $\faut(\C)$ a un d\'{e}veloppement de Fourier de coefficients $c_F(m)$ pour $m$ parcourant les id\'{e}aux positifs de $K$ (voir Weil \cite{weil-dirichletseries} ou Tan \cite{tan-modularelements} pour un point de vue ad\'{e}lique, Gekeler \cite{gekeler-these,gekeler-improper} pour un point de vue en la place $\infty$). La fonction $L(F,s)$ de la variable complexe $s$ est d\'{e}finie comme la s\'{e}rie de Dirichlet associ\'{e}e $L(F,s) = \sum_{m} c_F(m) q^{(1-s) \deg m} $ (cf. \cite[(1.10)]{tan-modularelements}~; dans \cite{tan-rockmore} p.~109, $L_f(\chi)$ avec $\chi = \chi_s = (m \mapsto q^{-s \deg m })$). Notons $M(F,s) = \sum_{k \in \Z} F \left( \matrice{\pi^k}{0}{0}{1} \right) q^{-ks}$ la transform\'{e}e de Mellin de $F$ ($s \in \C$~ et la somme est en fait finie).

\begin{prop}\phantomsection\label{prop-lmellin}
Pour tout $s \in \C$ de partie r\'{e}elle $>1$, on a
\[
 L(F,s) = \frac{q^{2(s-1)}}{q-1} M(F,s-1) = \frac{1}{q-1} \sum_{k \in \Z} F \left( \matrice{\pi^k}{0}{0}{1} \right) q^{(2-k)(s-1)}.
\]
La fonction $L(F,s)$ est un polyn\^{o}me non nul en $q^{-s}$ de degr\'{e} $\leq \deg(\nn) -3$. Elle poss\`{e}de un prolongement holomorphe \`{a} $\C$ ainsi qu'une \'{e}quation fonctionnelle pour $L(F,s)$ dont le centre de sym\'{e}trie est $s=1$.
\end{prop}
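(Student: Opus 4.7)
The plan is to derive the Mellin identity from the Fourier expansion of $F$ at the place $\infty$, and then to read off the polynomial degree bound, holomorphy and functional equation as direct consequences of the shape of $L(F,s)$.

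\emph{Step 1 (Mellin identity).} By the Fourier theory at $\infty$ of harmonic cocycles developed by Weil and exploited by Gekeler and Tan, for every integer $d \geq 0$ one has
\[
\sum_{\substack{m \text{ id\'{e}al positif} \\ \deg m = d}} c_F(m) \;=\; \frac{1}{q-1}\, F\!\left( \matrice{\pi^{d+2}}{0}{0}{1} \right),
\]
the factor $1/(q-1)$ coming from the normalisation of the additive character of $\Kinf$ and from the $\ZZ$-invariance of $F$. Substituting into $L(F,s)=\sum_m c_F(m)q^{(1-s)\deg m}$, grouping by $\deg m$ and reindexing via $k=d+2$ yields the first equality. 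The second equality is a cosmetic rewriting, since $F(\matrice{\pi^k}{0}{0}{1})=0$ for $k\leq 1$ by the $\I$-invariance and harmonicity of $F$, so the sum extends harmlessly to all $k \in \Z$.

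\emph{Step 2 (Polynomiality, degree bound, non-vanishing).} The parabolicity of $F$ at the pointe $\infty$ of $\Gamma_0(\nn)\bs \T$, combined with the explicit description of the representative demi-droite recalled in section~\ref{soussection-arbreBT}, forces $F(\matrice{\pi^k}{0}{0}{1})=0$ as soon as $k$ exceeds a bound linear in $\deg \nn$. A careful count (in the spirit of Gekeler's computations in \cite{gekeler-improper}) shows that the nonzero values occur only for $2 \leq k \leq \deg\nn - 1$, whence $L(F,s)$ is a polynomial in $q^{-s}$ of degree at most $\deg\nn - 3$. Non-vanishing is equivalent to $F \neq 0$, since the values $F(\matrice{\pi^k}{0}{0}{1})$ together with harmonicity determine $F$ uniquely.

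\emph{Step 3 (Holomorphy and functional equation).} Holomorphy on $\C$ is now automatic, $L(F,s)$ being a polynomial in $q^{-s}$. For the functional equation, the plan is to invoke the Atkin--Lehner involution $w_\nn$, which acts on $\faut_\nn(\C)$ and interchanges the pointes $0$ and $\infty$ of $\Gamma_0(\nn)\bs\T$; translating this symmetry of diagonal values through the identity of step~1 produces a functional equation exchanging $s$ and $2-s$, of centre $s=1$. Alternatively, one may deduce the same functional equation from Weil's adelic converse theorem applied to the automorphic representation attached to $F$.

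\emph{Main obstacle.} The delicate point is step~1, namely pinning down the exact normalisation constant relating Fourier coefficients $c_F(m)$ to the values of $F$ on the diagonal matrices $\matrice{\pi^k}{0}{0}{1}$: the adelic conventions of Weil and Tan and the ``$\infty$-adic'' conventions of Gekeler differ by explicit powers of $q$ and by the measure chosen on $\Kinf$, and these must all be tracked so that the constant matches the factor $q^{2(s-1)}/(q-1)$ of the statement. The integer bound $\deg\nn - 3$ in step~2 likewise demands precise control of the pointe at $\infty$ in $\Gamma_0(\nn)\bs\T$, finer than the dimension count \eqref{eq-genre}.
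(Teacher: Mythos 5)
The paper gives no proof of this proposition: it is recalled from the literature, with explicit pointers to Tan (\cite{tan-modularelements}, Prop.~2, Eq.~(3.4) and the corollary p.~305) and to \cite[(3.4)--(3.6)]{gekeler-improper}. Your outline follows the strategy of those references, but two points do not hold up. First, your Step~1 identity
\[
\sum_{\deg m = d} c_F(m) \;=\; \frac{1}{q-1}\, F\!\left(\matrice{\pi^{d+2}}{0}{0}{1}\right)
\]
is not an input from which the Mellin formula follows: it \emph{is} the Mellin formula, read off coefficient by coefficient in $q^{-s}$. You yourself flag the determination of this constant as the ``main obstacle'', which amounts to leaving the core of the statement unproved. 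To establish it one must actually invert the Fourier expansion of $F$ along the edges $\matrice{\pi^{k}}{y}{0}{1}$: the value at $y=0$ collects the coefficients $c_F(m)$ with $\deg m\leq k-2$ with explicit $q$-powers, and the exact diagonal value comes from a telescoping in $k$ --- this is precisely the computation done in the cited references, and nothing in your sketch replaces it.

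Second, your justification of the non-vanishing is incorrect. The values $F\bigl(\matrice{\pi^k}{0}{0}{1}\bigr)$ are the values of $F$ along a single geodesic of $\T$, and harmonicity does not determine a cochain from its restriction to one geodesic: at each vertex of the geodesic one only learns the sum of the remaining $q-1$ outgoing values. Indeed, $L(F,s)\equiv 0$ does not force $F=0$ for a general nonzero $F$: for $\deg\nn=3$ the polynomial $L(F,s)$ reduces to the constant $c_F(A)$, which vanishes for the difference of two distinct primitive forms; by the Mellin identity such an $F$ vanishes on the whole geodesic from $0$ to $\infty$ without being zero. The non-vanishing must be read for $F$ primitive (the only situation where the paper uses it, cf.\ proposition~\ref{prop-basethetapenrouldegre3}), where it follows from the normalisation $c_F(A)=1$, this being the constant term of $L(F,s)$ as a polynomial in $q^{-s}$. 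Your Step~3 (holomorphy, and the functional equation via $w_\nn$ exchanging the two cusps, centre $s=1$) is fine.
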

Pour cet \'{e}nonc\'{e} on renvoie \`{a} \cite{tan-modularelements} Prop.~2, Eq.~(3.4) et le corollaire p.~305~ (le niveau de $F$ y est not\'{e} $N = \nn \cdot \infty$) ; voir aussi \cite[(3.4)--(3.6)]{gekeler-improper} avec la convention $s=0$ pour centre de sym\'{e}trie. Ainsi, la valeur sp\'{e}ciale $L(F,1)$ est donn\'{e}e \`{a} un facteur pr\`{e}s par la somme, finie, des valeurs de $F$ le long de l'unique g\'{e}od\'{e}sique de l'arbre $\T$ qui relie les bouts $0$ et $\infty$~:
\begin{equation}\label{eq-LF1}
L(F,1) = \frac{1}{q-1} \sum_{k \in \Z} F \left( \matrice{\pi^k}{0}{0}{1} \right)
\end{equation}
(voir aussi \cite[prop.~2]{tan-modularelements}, \cite[3.6]{gekeler-improper}, \cite[(4)]{teitelbaum-modularsymbols}).

\subsection{Les op\'{e}rateurs de Hecke}
L'espace $\faut(\C)$ est muni d'un produit de Petersson $( \cdot , \cdot)_{\mu}$ provenant de celui sur les formes automorphes. On peut voir les \'{e}l\'{e}ments de $\faut(\C)$ comme des fonctions \`{a} support fini sur les ar\^{e}tes du graphe $\Gamma \bs \T$. Le produit de Petersson correspond alors \`{a} la norme $L^2$ sur l'ensemble discret des ar\^{e}tes, en prenant pour volume d'une ar\^{e}te $\tilde{e}$ la quantit\'{e} $\frac{1}{2} [\Gamma_e : \Gamma \cap \ZZ(K)]^{-1}$ ($e$ est une ar\^{e}te de $\T$ au-dessus de $\tilde{e}$ et $\Gamma_e$ son stabilisateur sous $\Gamma$). C'est un produit scalaire hermitien sur $\faut(\C)$ et \`{a} valeurs enti\`{e}res sur $\faut$.

Soit $\mm$ un id\'{e}al premier \`{a} $\nn$. L'op\'{e}rateur de Hecke $T_{\mm}$ est un endomorphisme de $\faut(\C)$ qui provient d'une correspondance \`{a} coefficients entiers sur $Y(\Gamma \bs \T)$ (section 4.9 de \cite{gekeler-reversat}) et stabilise la structure enti\`{e}re $\faut$. On peut le d\'{e}finir par la formule suivante~:
\begin{equation}\label{eq-defTm}
 (T_{\mathfrak{m}} F)(e) = \sum_{\substack{a,b,d \in A \\ (ad)=\mm,\ (a)+\nn=A \\ \deg b < \deg d,\ a \text{ et }d \text{ unitaires}}} F\left( \matrice{a}{b}{0}{d} e \right) \qquad(F \in \faut(\C), e \in Y(\T)).
\end{equation}
Ces op\'{e}rateurs commutent et sont hermitiens pour $(\cdot ,\cdot)_{\mu}$. L'alg\`{e}bre $\TT$ dite \emph{de Hecke} est la sous-alg\`{e}bre commutative de $\End(\faut(\C))$ engendr\'{e}e sur $\Z$ par les $T_{\mm}$, pour $\mm$ premier \`{a} $\nn$. Pour $\mm$ non premier \`{a} $\nn$, la formule \eqref{eq-defTm} d\'{e}finit encore un op\'{e}rateur, not\'{e} $T_{\mm}$, qui commute aux autres mais n'est plus n\'{e}cessairement hermitien.

Soit $w_{\nn}$ l'involution de $\faut(\C)$ d\'{e}finie par
\[
 (w_{\nn}F)(e) = F\left(\matrice{0}{-1}{n}{0}e\right) \qquad (F \in \faut(\C),e \in Y(\T))
\]
o\`{u} $n$ est le g\'{e}n\'{e}rateur unitaire de $\nn$. Elle est hermitienne pour $(\cdot,\cdot)_{\mu}$ et commute \`{a} $T_{\mm}$ pour $\mm$ premier \`{a} $\nn$.

Par la suite, on travaillera essentiellement avec $\Gamma = \Gamma_0(\pp)$ pour $\pp$ premier. Dans ce cas les endomorphismes $-w_{\pp}$ et $T_{\pp}$ co\"{i}ncident. De plus, l'espace $\faut(\C)$ se d\'{e}compose en somme directe orthogonale pour $(\cdot,\cdot)_{\mu}$ de sous-espaces propres de dimension $1$ pour tous les op\'{e}rateurs de Hecke. En particulier, il existe une base orthonorm\'{e}e de $\faut(\C)$ constitu\'{e}e de formes primitives, c'est-\`{a}-dire propres pour tous les $T_\mm$ et normalis\'{e}es (\emph{i.e.} le coefficient de Fourier associ\'{e} \`{a} l'id\'{e}al $A$ est \'{e}gal \`{a} $1$).

\section{Symboles modulaires pour \texorpdfstring{$\Fq(T)$}{FqT}}\label{section-sm}

Except\'{e} pour le paragraphe~\ref{soussection-smcochaines}, cette section est constitu\'{e}e de rappels de \cite{teitelbaum-modularsymbols}, auquel on renvoie pour plus de d\'{e}tails.

\subsection{Les symboles modulaires}

Soit $M$ le groupe ab\'{e}lien des diviseurs de degr\'{e} nul \`{a} support dans $\PP^{1}(K)$. Le sous-groupe de congruence $\Gamma=\Gamma_0(\nn)$, qui op\`{e}re \`{a} gauche par homographies sur $\PP^{1}(K)$, munit $M$ d'une structure de $\Gamma$-module. Soit $R$ un anneau commutatif. On munit le $R$-module $M \otimes_{\Z} R$ de l'action induite de $\Gamma$. Le $R$-module des \emph{symboles modulaires pour $\Gamma$ \`{a} valeurs dans $R$} est le groupe ab\'{e}lien $\smod_{\nn}(R) = H_{0}(\Gamma, M \otimes_{\Z} R )$ avec sa structure canonique de $R$-module. Il est engendr\'{e} par les classes $\left[ r,s \right]$ des diviseurs $(s)-(r)$ pour $r, s$ dans $\PP^{1}(K)$. Pour simplifier, on note aussi cet espace $\smod(R)$ et, lorsque $R = \Z$, simplement $\smod_\nn$ ou $\smod$.

Consid\'{e}rons le groupe $B$ des diviseurs de degr\'{e} nul \`{a} support dans $\Gamma \bs \PP^{1}(K)$ et soit $B(R) = B \otimes_{\Z} R$. L'application $\left[r,s\right] \mapsto (\Gamma s) - (\Gamma r)$ donne par lin\'{e}arit\'{e} une application de bord $\smod_\nn(R) \to B(R)$ surjective. Le sous-groupe des \emph{symboles modulaires paraboliques} est son noyau, not\'{e} $\smod_\nn^{0}(R)$. Pour simplifier, on le note aussi $\smod^0(R)$ et, lorsque $R = \Z$, simplement $\smod^0_\nn$ ou $\smod^0$. On a les isomorphismes canoniques $\smod \otimes_{\Z} R \simeq \smod(R)$ et, si $R$ est sans torsion sur $\Z$, $\smod^0 \otimes_{\Z} R \simeq \smod^0(R)$.

Soient $H_{1}(\Gamma \bs \T,\ptes,\Z)$ le premier groupe d'homologie relative aux pointes du graphe quotient $\Gamma \bs \T$ et $H_{1}(\Gamma \bs \T,\Z)$ son sous-groupe des cycles (pour ces notions nous renvoyons \`{a} \cite{serre-arbres}, ch.~2, II.8). Soient $r$ et $s$ des bouts dans $\PP^1(K)$. Il existe alors une unique g\'{e}od\'{e}sique de l'arbre $\T$ allant de $r$ \`{a} $s$. L'application qui associe au symbole modulaire $\left[r,s\right]$ l'image dans $\Gamma \bs \T$ de cette g\'{e}od\'{e}sique est bien d\'{e}finie. Elle s'\'{e}tend par lin\'{e}arit\'{e} en un homomorphisme de groupes ab\'{e}liens $\smod \to H_{1}(\Gamma \bs \T, \ptes, \Z)$.

\begin{prop}[{\cite[p.~277]{teitelbaum-modularsymbols}}]\phantomsection\label{prop-symbolesmodulairesethomologie}
L'application pr\'{e}c\'{e}dente induit les isomorphismes de groupes $\smod / \smod\tors  \simfleche  H_{1}(\Gamma \bs \T,\ptes,\Z)$ et $\smod^0 / (\smod^0)\tors  \simfleche  H_{1}(\Gamma \bs \T,\Z)$.
\end{prop}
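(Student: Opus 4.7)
Le but est de relier, via un argument d'homologie équivariante, les co-invariants $\smod = H_0(\Gamma, M)$ à l'homologie relative du quotient $\Gamma \bs \T$.

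L'application est bien définie~: pour $r, s \in \PP^1(K)$, la géodésique unique $\gamma_{r,s}$ reliant les bouts $r$ et $s$ dans $\T$ est de longueur infinie, mais son image dans $\Gamma \bs \T$ pénètre chacune des deux demi-droites correspondantes de $\ptes$ à partir d'un certain rang~; modulo les 1-chaînes à support dans $\ptes$, elle fournit donc une 1-chaîne relative à support compact, à bord dans $\ptes$, représentant un élément de $H_1(\Gamma \bs \T, \ptes, \Z)$. L'équivariance de cette construction par $\Gamma$ entraîne la factorisation par $\smod = M_\Gamma$ et donne un morphisme $\phi : \smod \to H_1(\Gamma \bs \T, \ptes, \Z)$. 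Pour la surjectivité, j'utiliserais la structure de $\Gamma \bs \T$ fournie par le théorème~9 de Serre rappelé plus haut~: tout cycle relatif se représente, modulo chaînes à support dans $\ptes$, par une 1-chaîne à support compact~; en la relevant à $\T$ simplement connexe, puis en la prolongeant le long des bouts jusqu'aux points de $\PP^1(K)$, on l'exprime comme somme d'images de géodésiques $\gamma_{r_i, s_i}$.

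L'étape principale, qui sera probablement la plus technique, est l'injectivité modulo torsion. Je propose de l'établir par la suite spectrale équivariante de Hochschild--Serre. On considère l'espace $\bar{\T} = \T \cup \PP^1(K)$ obtenu en adjoignant à $\T$ ses bouts $K$-rationnels~; il reste contractile et admet $\PP^1(K)$ comme sous-espace discret. La suite exacte longue du couple $(\bar{\T}, \PP^1(K))$ donne alors $H_1(\bar{\T}, \PP^1(K)) = \ker(\Z[\PP^1(K)] \to \Z) = M$, les autres modules relatifs étant nuls. La suite spectrale
\[
 E^2_{p,q} = H_p(\Gamma, H_q(\bar{\T}, \PP^1(K))) \Rightarrow H^\Gamma_{p+q}(\bar{\T}, \PP^1(K))
\]
dégénère donc en $H^\Gamma_1(\bar{\T}, \PP^1(K)) \simeq H_0(\Gamma, M) = \smod$. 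Comme les stabilisateurs dans $\Gamma \subset \GL_2(A)$ des simplexes de $\T$ sont finis, le morphisme naturel comparant l'homologie équivariante à l'homologie du quotient a noyau et conoyau de torsion (annihilés par les ordres des stabilisateurs). Puisque $H_1(\Gamma \bs \T, \ptes, \Z)$ est libre, étant l'homologie d'un complexe simplicial de dimension~$1$, on en déduit que $\phi$ est surjectif de noyau exactement $\smod\tors$, d'où le premier isomorphisme.

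L'énoncé parabolique résulte ensuite de la compatibilité entre l'application de bord $\smod \to B$ et le connectant $H_1(\Gamma \bs \T, \ptes, \Z) \to \tilde{H}_0(\ptes, \Z) = B$ de la suite exacte longue du couple $(\Gamma \bs \T, \ptes)$, dont le noyau est précisément $H_1(\Gamma \bs \T, \Z)$~; on restreint alors l'isomorphisme précédent au noyau des deux applications de bord.
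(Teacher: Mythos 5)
The paper itself gives no proof of this proposition --- it is quoted from Teitelbaum (p.~277) --- so there is no internal argument to compare yours against. Your first and last steps are essentially sound: the map is well defined because the image of the geodesic eventually runs out along two cusps of $\Gamma \bs \T$, surjectivity follows from Serre's description of $\Gamma \bs \T$, and the reduction of the relative statement to the parabolic one via the two compatible boundary maps onto $B$ is the right bookkeeping.

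The genuine gap is in the central step, injectivity modulo torsion via the equivariant spectral sequence of the pair $(\bar{\T},\PP^1(K))$. For the two spectral sequences of the underlying double complex to do what you want, you need a single $\Gamma$-equivariant chain complex $C_\bullet$ such that (a) $H_q(C_\bullet)$ is $M$ in degree $1$ and $0$ elsewhere, and (b) each $C_p$ is a sum of modules $\Z[\Gamma/\Gamma_\sigma]$ with $\Gamma_\sigma$ \emph{finite}, whose coinvariants give the chain complex of $(\Gamma\bs\T,\ptes)$. No natural model satisfies both. With the simplicial chain complex of the pair, the points of $\PP^1(K)$ carry no cells of $\T$, so the relative complex is just $C_\bullet(\T)$, whose homology is $\Z$ in degree $0$; the second spectral sequence then yields $H_1^\Gamma = H_1(\Gamma,\Z)=\Gamma^{\mathrm{ab}}$, not $\smod$ (these differ already in rank: $g$ versus $g+h-1$). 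Your computation $H_1(\bar{\T},\PP^1(K))=M$ uses the topology of the end compactification, which no cellular structure sees. If instead you adjoin the points of $\PP^1(K)$ as $0$-cells, their stabilizers in $\Gamma_0(\nn)$ are \emph{infinite} (the stabilizer of $\infty$ contains every $\matrice{1}{b}{0}{1}$ with $b \in A$), so the justification ``les stabilisateurs des simplexes sont finis'' fails for the pair and the torsion control of the comparison map is not established. The standard repair is algebraic rather than topological: the short exact sequence of $\Gamma$-modules $0 \to M \to \Z[\PP^1(K)] \to \Z \to 0$ together with Shapiro's lemma identifies $\smod^0$ with the cokernel of $\bigoplus_c H_1(\Gamma_{r_c},\Z) \to H_1(\Gamma,\Z)$, where $\Gamma_{r_c}$ are the cusp stabilizers; Bass--Serre theory for the action on the tree $\T$ (whose vertex and edge stabilizers \emph{are} finite) identifies $H_1(\Gamma\bs\T,\Z)$ with $\Gamma^{\mathrm{ab}}/(\Gamma^{\mathrm{ab}})\tors$; and since each $\Gamma_{r_c}$ is a torsion group, its image lies in $(\Gamma^{\mathrm{ab}})\tors$, giving the parabolic isomorphism, from which the relative one follows by your final paragraph read in the opposite direction.
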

Soit $h$ le nombre de pointes du graphe $\Gamma \bs \T$, qui est aussi le nombre de pointes de la courbe modulaire de Drinfeld $X_{\Gamma}$ associ\'{e}e \`{a} $\Gamma$ (pour des formules donnant $h$, on renvoie \`{a} \cite[sec.~6]{gekeler-invariantsalgcurvesDMC}). D'apr\`{e}s la proposition, $\smod (\Q) $ et $\smod^{0} (\Q) $ ont pour dimensions respectives $g + h- 1$ et $g$ sur $\Q$. Enfin, lorsque $\nn$ est premier de degr\'{e} impair (resp. pair), la torsion de $\smod$ est nulle (resp. cyclique d'ordre $q+1$) (cf. \cite[p.~278]{teitelbaum-modularsymbols}).

\subsection{L'accouplement avec les cocha\^{i}nes}

Soit $\mm$ un id\'{e}al premier \`{a} $\nn$. \`{A} partir de la correspondance sur $Y(\Gamma \bs \T)$ qui a servi \`{a} d\'{e}finir $T_{\mm}$ sur les cocha\^{i}nes, on d\'{e}finit un op\'{e}rateur de Hecke $T_{\mm}$ sur $\smod(R)$. C'est l'endomorphisme de $\smod(R)$ donn\'{e} par
\[
T_{\mathfrak{m}}\left[r,s\right] =  \sum_{\substack{a,b,d \in A \\ (ad)=\mathfrak{m},\ (a) + \nn=A \\ \deg b < \deg d,\ a \text{ et }d \text{ unitaires}}} \left[ \matrice{a}{b}{0}{d}r, \matrice{a}{b}{0}{d}s \right]  \qquad (\left[r,s\right]\in \smod(R))
\]
(les matrices agissent par homographies et la formule garde un sens pour $\mm$ non premier \`{a} $\nn$). De m\^{e}me, on d\'{e}finit l'involution $w_{\nn}$ de $\End(\smod(R))$ par
\[
 w_\nn\left[r,s \right] = \left[ -1/(nr),-1/(ns)\right] \qquad (\left[r,s\right]\in \smod(R))
\]
o\`{u} $n$ est le g\'{e}n\'{e}rateur unitaire de $\nn$. Tous ces op\'{e}rateurs stabilisent le sous-espace parabolique.

Suivant \cite[d\'{e}f.~8]{teitelbaum-modularsymbols}, pour $F \in \faut(R)$ et $\left[r,s\right] \in \smod(R)$, on pose $\langle  \left[r, s \right],F \rangle=\sum_{e \in c} F(e)$ o\`{u} $c$ est la g\'{e}od\'{e}sique de $\T$ allant de $r$ \`{a} $s$. Cette somme est bien d\'{e}finie et finie,  par la parabolicit\'{e} de $F$ et la description de $\Gamma \bs \T$ rappel\'{e}e dans la section~\ref{soussection-arbreBT}. On d\'{e}duit une application $R$-bilin\'{e}aire $\langle \cdot , \cdot \rangle : \smod(R) \times \faut(R) \to R $ qui est compatible aux op\'{e}rateurs de Hecke (\cite[lem.~9]{teitelbaum-modularsymbols}).
\begin{theo}[Teitelbaum]\phantomsection\label{th-accouplementsm}
L'accouplement d'int\'{e}gration $\langle \cdot , \cdot \rangle$ fournit une suite exacte de $\Z$-modules
\[\begin{array}{ccclcc}
 0 \longrightarrow & (\smod^0)\tors \longrightarrow & \smod^0 \longrightarrow & \Hom(\faut,\Z) \longrightarrow & \Phi_{\infty} \longrightarrow & 0 \\
&& m \longmapsto & (F \mapsto \langle m,F \rangle ) &&
\end{array}
\]
o\`{u} $\Phi_{\infty}$ est le groupe des composantes connexes de la fibre sp\'{e}ciale du mod\`{e}le de N\'{e}ron de la jacobienne de la courbe modulaire de Drinfeld $X_{\Gamma}$ en la place $\infty$. En particulier, on a un accouplement parfait $ \langle \cdot  , \cdot \rangle : \smod^0(\Q) \times \faut(\Q) \rightarrow \Q$ sur $\Q$.
\end{theo}
(La suite exacte est le th\'{e}or\`{e}me~14 de \cite{teitelbaum-modularsymbols}~; le deuxi\`{e}me \'{e}nonc\'{e} se d\'{e}duit par extension des scalaires \`{a} $\Q$.) On dispose donc d'un accouplement parfait $\smod^{0}(\C) \times \faut(\C) \to \C$. Il permet d'identifier les op\'{e}rateurs de Hecke sur $\smod^0(\C)$ et $\faut(\C)$, et plus g\'{e}n\'{e}ralement l'alg\`{e}bre de Hecke $\TT$ \`{a} la sous $\Z$-alg\`{e}bre de $\End(\smod^{0}(\C))$ engendr\'{e}e par les $T_{\mm}$ avec $\mm$ premier \`{a} $\nn$. On notera encore $\TT$ cette alg\`{e}bre.

Enfin, d'apr\`{e}s \eqref{eq-LF1}, la valeur sp\'{e}ciale de la fonction $L$ s'exprime avec l'accouplement par $ L(F,1) =  \langle \left[0,\infty\right], F \rangle / (q-1)$ (voir aussi \cite[th.~23]{teitelbaum-modularsymbols}).

\subsection{Symboles paraboliques comme cocha\^{i}nes}\label{soussection-smcochaines}
Nous apportons un compl\'{e}ment \`{a} la th\'{e}orie de Teitelbaum en mettant en \'{e}vidence un isomorphisme canonique entre d'une part les symboles paraboliques \`{a} torsion pr\`{e}s, et d'autre part les cocha\^{i}nes paraboliques. Il s'agit d'une comparaison de \cite{teitelbaum-modularsymbols} et Gekeler--Nonnengardt \cite{gekeler-nonnengardt}. Nous ne connaissons pas de construction similaire pour les symboles et formes modulaires classiques. Cet isomorphisme interviendra notamment dans l'exemple de la section~\ref{soussousection-exemplecalcule}.

Commen\c{c}ons par rappeler le lien existant entre cocha\^{i}nes paraboliques pour $\Gamma$ et cycles du graphe $\Gamma \bs \T$, d'apr\`{e}s \cite{gekeler-nonnengardt}. Soit $n(\tilde{e}) = n(e)$ l'indice de $\Gamma \cap \ZZ(K)$ dans $\Gamma_e$, pour une ar\^{e}te $e$ de $\T$ d'image $\tilde{e}$ dans $\Gamma \bs \T$. L'homomorphisme de $\Z$-modules
\[
\begin{array}{rcl}
 j : H_{1}(\Gamma \bs \T,\Z) & \longrightarrow & \faut \\
\varphi & \longmapsto & (e \mapsto n(e) \varphi(\tilde{e}))
\end{array}
\]
est bien d\'{e}fini, injectif et de conoyau fini (\cite[3.2.5]{gekeler-reversat}). Comme $\Gamma = \Gamma_0(\nn)$, il est m\^{e}me bijectif~: c'est un r\'{e}sultat profond de Gekeler--Nonnengardt sur la structure du graphe $\Gamma \bs \T$ (\cite[th.~3.3]{gekeler-nonnengardt}).

\begin{notation}
On d\'{e}signe par $\overline{\smod^0}$ le quotient maximal sans torsion $\smod^0 / (\smod^0)\tors$.
\end{notation}

\begin{lem}\phantomsection\label{lem-isomfautsmod}
On a un isomorphisme canonique de $\TT$-modules
\[
\alpha : \overline{\smod^0} \simfleche \faut. 
\]
Notons $i$ l'injection $\faut \hookrightarrow \Hom(\faut,\Z)$ provenant du produit de Petersson. Alors $i \circ \alpha$ est l'injection $\overline{\smod^0} \hookrightarrow \Hom(\faut,\Z)$ provenant de l'accouplement $\langle \cdot , \cdot \rangle$.
\end{lem}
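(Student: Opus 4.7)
Le plan consiste \`{a} d\'{e}finir $\alpha$ comme composition de deux isomorphismes canoniques d\'{e}j\`{a} rappel\'{e}s dans ce paragraphe, puis \`{a} v\'{e}rifier successivement la $\TT$-\'{e}quivariance et la compatibilit\'{e} avec les accouplements.

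D'abord, j'utiliserais l'isomorphisme canonique $\overline{\smod^{0}} \simfleche H_{1}(\Gamma \bs \T, \Z)$ fourni par la proposition~\ref{prop-symbolesmodulairesethomologie}, puis je le composerais avec l'isomorphisme $j : H_{1}(\Gamma \bs \T, \Z) \simfleche \faut$ de Gekeler--Nonnengardt mentionn\'{e} juste avant l'\'{e}nonc\'{e}. On d\'{e}finit ainsi $\alpha = j \circ $ (isomorphisme de Teitelbaum). Concr\`{e}tement, si $m \in \overline{\smod^{0}}$ se rel\`{e}ve en la classe d'un cycle $\varphi \in H_{1}(\Gamma \bs \T, \Z)$, obtenu en projetant sur $\Gamma \bs \T$ la combinaison des g\'{e}od\'{e}siques de $\T$ associ\'{e}es aux symboles composant $m$, alors $\alpha(m)$ est la cocha\^{i}ne $e \mapsto n(e) \varphi(\tilde{e})$. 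Comme chaque facteur de la composition est un isomorphisme de $\Z$-modules, $\alpha$ en est un aussi.

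Pour la $\TT$-\'{e}quivariance, je remarquerais que l'op\'{e}rateur $T_{\mm}$ est d\'{e}fini, sur chacun des trois $\Z$-modules $\overline{\smod^{0}}$, $H_{1}(\Gamma \bs \T, \Z)$ et $\faut$, via la m\^{e}me correspondance sur $Y(\Gamma \bs \T)$ d\'{e}crite par \eqref{eq-defTm}. Les deux isomorphismes \'{e}tant construits directement \`{a} partir de la structure simpliciale de $\T$, la fonctorialit\'{e} de la correspondance entra\^{i}ne qu'ils commutent aux $T_{\mm}$, d'o\`{u} la $\TT$-\'{e}quivariance de $\alpha$.

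Pour la seconde assertion, je fixerais $m \in \overline{\smod^{0}}$ de cycle associ\'{e} $\varphi$ et $F' \in \faut$, et je comparerais $(\alpha(m), F')_{\mu}$ avec $\langle m, F' \rangle$. En posant $F = \alpha(m)$, le facteur $n(\tilde{e})$ dans $F(e) = n(e)\varphi(\tilde{e})$ se simplifie avec le volume de Petersson $\frac{1}{2n(\tilde{e})}$ de l'ar\^{e}te $\tilde{e}$, et apr\`{e}s recollement des deux orientations possibles on obtient $(F,F')_{\mu} = \sum_{\tilde{e}} \varphi(\tilde{e}) F'(\tilde{e})$, la somme portant sur les ar\^{e}tes non orient\'{e}es de $\Gamma \bs \T$. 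D'autre part, $\langle m, F' \rangle$ est la somme des $F'(e)$ le long des g\'{e}od\'{e}siques de $\T$ repr\'{e}sentant $m$, qui, par $\Gamma$-invariance de $F'$, se calcule sur $\Gamma \bs \T$ et produit exactement $\sum_{\tilde{e}} \varphi(\tilde{e}) F'(\tilde{e})$ par d\'{e}finition de $\varphi$. On conclut que $i \circ \alpha$ co\"{i}ncide avec l'injection de Teitelbaum. L'obstacle principal sera la tenue rigoureuse des signes d'orientation et du facteur $\frac{1}{2}$ dans le volume de Petersson; une fois ces conventions fix\'{e}es, l'\'{e}galit\'{e} se lit essentiellement sur la d\'{e}finition de $\alpha$ et de $j$.
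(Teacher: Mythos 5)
Votre construction de $\alpha$ (composition de l'isomorphisme de Teitelbaum $\overline{\smod^0}\simeq H_1(\Gamma\bs\T,\Z)$ avec $j$ de Gekeler--Nonnengardt) et votre vérification de la seconde assertion (comparer $(\alpha(m),F')_{\mu}$ et $\langle m,F'\rangle$ en simplifiant le poids $n(\tilde e)$ de $j$ contre le volume $\tfrac{1}{2}n(\tilde e)^{-1}$ des arêtes, via l'alternance de $\varphi$ et de $F'$) sont exactement celles du texte; sur ces deux points votre plan est correct et identique à la preuve de l'article.

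Le point faible est la $\TT$-équivariance. Vous l'obtenez en invoquant la <<~fonctorialité de la correspondance~>> sur $Y(\Gamma\bs\T)$, mais ce n'est pas immédiat~: l'application $j$ n'est pas un morphisme simplicial, elle tord par les indices de stabilisateurs $n(e)$, et l'action de Hecke sur $\smod$ est donnée par la formule matricielle sur les pointes~; vérifier directement que $j$ et l'isomorphisme de Teitelbaum commutent à la correspondance (avec ses multiplicités) demanderait un argument que vous ne donnez pas. L'article contourne précisément cette difficulté~: une fois établie l'identité $(\alpha(m),G)_{\mu}=\langle m,G\rangle$ --- que vous avez déjà ---, on écrit, pour $x\in\overline{\smod^0}$ et $G\in\faut$, $(\alpha(T_{\mm}x)-T_{\mm}\alpha(x),G)_{\mu}=\langle T_{\mm}x,G\rangle-(\alpha(x),T_{\mm}G)_{\mu}=\langle T_{\mm}x,G\rangle-\langle x,T_{\mm}G\rangle=0$, en utilisant la compatibilité de $\langle\cdot,\cdot\rangle$ aux opérateurs de Hecke et le caractère hermitien de $T_{\mm}$ pour $(\cdot,\cdot)_{\mu}$~; par non-dégénérescence du produit de Petersson après extension des scalaires à $\C$, la cochaîne $\alpha(T_{\mm}x)-T_{\mm}\alpha(x)$ est nulle. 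Je vous conseille donc soit de remplacer votre argument de fonctorialité par cette déduction (qui n'utilise que ce que vous avez déjà démontré), soit de justifier explicitement la compatibilité de $j$ avec la correspondance de Hecke, ce qui est nettement plus délicat.
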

\begin{proof}
L'isomorphisme $\alpha$ entre les $\Z$-modules $\overline{\smod^0}$ et $ \faut$ est obtenu en composant $j$ avec celui de la proposition~\ref{prop-symbolesmodulairesethomologie} (Teitelbaum). Prouvons que $i \circ \alpha$ est induit par $\langle \cdot , \cdot \rangle$ en le v\'{e}rifiant sur les g\'{e}n\'{e}rateurs $\left[r,s\right]$ de $\smod^{0}$ avec $s \in \Gamma r$. Notons $c$ la g\'{e}od\'{e}sique de $\T$ reliant $r$ \`{a} $s$, $\tilde{c}$ sa projection dans $\Gamma \bs \T$ et $m(\tilde{e})$ le nombre d'ar\^{e}tes de $c$ au-dessus de $\tilde{e} \in \tilde{c}$. L'image de $\left[r,s \right]$ dans $H_{1}(\Gamma \bs \T,\Z)$ s'identifie \`{a} la fonction suivante sur $Y(\Gamma \bs \T)$~:
\[
 \varphi(\tilde{e}) = \# \{ e \in c \mid e \text{ se projette sur } \tilde{e} \} - \# \{ e \in c \mid \bar{e} \text{ se projette sur } \tilde{e} \}
\]
(o\`{u} $\bar{e}$ d\'{e}signe l'ar\^{e}te oppos\'{e}e de $e$). En d'autres termes, $\varphi(\tilde{e}) = m(\tilde{e})  -m(\overline{\tilde{e}})$. Cette fonction est altern\'{e}e, c'est-\`{a}-dire $\varphi(\overline{\tilde{e}}) = -\varphi(\tilde{e})$. L'accouplement entre $\left[r,s \right]$ et une cocha\^{i}ne $G \in \faut$ est alors
\[
 \langle \left[r,s\right],G \rangle = \sum_{e \in c} G(e) = \sum_{\tilde{e} \in \tilde{c}} \varphi(\tilde{e}) G(\tilde{e}) = \frac{1}{2} \sum_{\tilde{e} \in Y(\Gamma \bs \T)} \varphi(\tilde{e}) G(\tilde{e})
\]
la derni\`{e}re \'{e}galit\'{e} provenant de l'alternance de $\varphi$ et $G$. En posant $F = j(\varphi) = \alpha(\left[r,s\right])$, on obtient
\[
 \langle \left[r,s\right],G \rangle = \frac{1}{2} \sum_{\tilde{e} \in Y(\Gamma \bs \T)} \frac{1}{n(\tilde{e})} F(\tilde{e}) G(\tilde{e}) = (F,G)_{\mu}.
\]

Soit $x \in \overline{\smod^0}$. Par ce qui pr\'{e}c\`{e}de et la compatibilit\'{e} de $\langle \cdot , \cdot \rangle $ aux op\'{e}rateurs de Hecke, on voit que la cocha\^{i}ne $\alpha(T_{\mm} x) - T_{\mm} \alpha (x)$ est orthogonale pour $(\cdot,\cdot)_{\mu}$ \`{a} $\faut$. Par extension des scalaires, elle est orthogonale \`{a} $\faut(\C)$, donc nulle. Ainsi, $\alpha$ est Hecke-\'{e}quivariant.
\end{proof}

\begin{remarque}
\begin{itemize}
\item \emph{Via} l'isomorphisme $\alpha$, la suite exacte du th\'{e}or\`{e}me~\ref{th-accouplementsm} revient \`{a} 
$
 0 \rightarrow  \faut \stackrel{i}{\rightarrow} \Hom( \faut,\Z) \rightarrow \Phi_{\infty} \rightarrow 0.
$
Gekeler l'avait aussi \'{e}tablie de mani\`{e}re ind\'{e}pendante (\cite[cor.~2.11]{gekeler-analyticalweil}) comme cons\'{e}quence de l'uniformisation analytique de la jacobienne de $X_{\Gamma}$ par Gekeler--Reversat \cite{gekeler-reversat}.
\item Bien que l'espace des symboles modulaires soit canoniquement isomorphe \`{a} $\faut$ \`{a} torsion pr\`{e}s, il conserve son int\'{e}r\^{e}t car la pr\'{e}sentation de Manin, dont nous ferons un usage essentiel, ne semble pas avoir \'{e}t\'{e} \'{e}tablie directement sur les cocha\^{i}nes.
\item Une base de $\faut$ peut s'obtenir par une m\'{e}thode combinatoire reposant sur la d\'{e}termination du graphe $\Gamma \bs \T$ (voir Gekeler--Nonnengardt \cite{gekeler-automorpheformen,nonnengardt-diplomarbeit,gekeler-nonnengardt}). Gr\^{a}ce \`{a} leur pr\'{e}sentation finie rappel\'{e}e ci-apr\`{e}s, les symboles modulaires permettent de d\'{e}terminer une base d'un espace isomorphe \`{a} $\faut$, \emph{via} $\alpha$, sans conna\^{i}tre pr\'{e}cis\'{e}ment $\Gamma \bs \T$.
\end{itemize}
\end{remarque}

\subsection{La pr\'{e}sentation finie}
Consid\'{e}rons la droite projective $\PP^{1}(A/\nn)$. Ses \'{e}l\'{e}ments sont les classes d'\'{e}quivalence de couples $(u,v) \in A\times A$ avec $(u)+(v)+\nn=A$, deux tels couples $(u_1,v_1)$ et $(u_2,v_2)$ \'{e}tant \'{e}quivalents s'il existe $w\in A$ avec $(w)+\nn=A$ et  $(u_1,v_1) \equiv (w u_2, w v_2) \bmod \nn$. On note $(u:v)$ la classe de $(u,v)$.

Le sous-groupe $\Gamma_0(\nn)$, qui op\`{e}re par multiplication \`{a} gauche sur $\G(A)$, est d'indice fini et on a une bijection
$\Gamma_0(\nn) \bs \G(A) \to \PP^{1}(A/\nn)$ donn\'{e}e par $\Gamma_0(\nn) \matrice{a}{b}{u}{v} \mapsto (u:v)$ (noter qu'on peut toujours choisir un repr\'{e}sentant $(u,v)$ de $(u:v)$ avec $u$ et $v$ premiers entre eux). Consid\'{e}rons l'application
\[ \begin{array}{rcl}
\PP^{1}(A/\nn) & \longrightarrow & \smod \\
(u:v) &\longmapsto & \left[g0,g\infty\right] = \left[ b/v,a/u \right]
\end{array} \]
o\`{u} $g=\matrice{a}{b}{u}{v} \in \G(A)$ est une matrice relevant $(u:v)$ par la bijection pr\'{e}c\'{e}dente\footnote{Teitelbaum a adopt\'{e} la convention inverse $\left[g\infty,g0\right]$~; nous avons choisi celle qui semble \^{e}tre la plus courante pour les symboles modulaires classiques.}. Elle est bien d\'{e}finie et se prolonge par $\Z$-lin\'{e}arit\'{e} en un homomorphisme surjectif
\[
\xi : \Z [ \PP^{1}(A/\nn) ] \longrightarrow \smod
\]
o\`{u} la surjectivit\'{e} provient d'un d\'{e}veloppement en fractions continu\'{e}es (\cite[lem.~16]{teitelbaum-modularsymbols}). On appelle les $\xi(u:v)$ les \emph{symboles de Manin--Teitelbaum} et ils engendrent $\smod$. La droite $\PP^1(A/\nn)$ est munie d'une action naturelle \`{a} droite de $\G(A)$. Posons dans $\G(\Fq)$~:
\[
\sigma = \matrice{0}{1}{-1}{0} \  , \ \tau = \matrice{0}{-1}{1}{-1}.
\]
Ces matrices sont d'ordre respectivement $4$ (ou $2$ si $p=2$) et $3$. La pr\'{e}sentation suivante se d\'{e}duit de \cite{teitelbaum-modularsymbols}, \'{e}nonc\'{e}s pp.~283--286 et th\'{e}or\`{e}me~21. 

\begin{theo}[Teitelbaum]\phantomsection\label{theo-presentationmanin}
Le $R$-module $\smod(R)$ est isomorphe au quotient du $R$-module libre $R[\PP^{1}(A/\nn)]$ par le sous-module engendr\'{e} par les relations
\begin{align*}
\begin{split}
  (x) + (x \sigma )  \\
  (x)+(x \tau) + (x \tau^2)\\
  (x)-(x\delta)
\end{split}
\end{align*}
pour toute matrice $\delta$ diagonale dans $\G(\Fq)$ et $x \in \PP^{1}(A/\nn)$.
\end{theo}
Ces relations sont celles \eqref{eq-relationsMT} donn\'{e}es dans l'introduction.

\begin{remarque}\phantomsection\label{rem-Delta}
Notons $\Delta$ le sous-groupe de $\G(\Fq)$ form\'{e} des matrices $\matrice{\lambda}{0}{0}{1}$ pour $\lambda \in \Fq^{\times}$. De fa\c{c}on \'{e}quivalente, le troisi\`{e}me ensemble de relations peut \^{e}tre remplac\'{e} par
\[
 (x)-(x\delta) \qquad \text{pour } \delta \in\Delta, x \in \PP^1(A/\nn).
\]

\end{remarque}

\section{Une base explicite de symboles de Manin--Teitelbaum}\label{section-baseexplicite}

\subsection{Une variante de l'espace des symboles modulaires}\label{soussection-variante}

\begin{notation}
Consid\'{e}rons la droite projective $\PP^{1}(A)$. Ses \'{e}l\'{e}ments sont les classes d'\'{e}quivalence de couples $(u,v) \in A\times A$ avec $(u)+(v)=A$, deux tels couples $(u_1,v_1)$ et $(u_2,v_2)$ \'{e}tant \'{e}quivalents s'il existe $\lambda \in A^{\times} = \Fq^{\times}$ tel que $(u_2,v_2) = (\lambda u_1, \lambda v_1)$. La classe de $(u,v)$ sera not\'{e}e $\classeA{u}{v}$ afin de la distinguer de $(u:v) \in \PP^{1}(A/\nn)$.

Pour $e$ entier $\geq 0$, on note $\PP^{1}(A)_{e}$ l'ensemble des $\classeA{u}{v} \in \PP^{1}(A)$ avec $\deg u \leq e$ et $\deg v \leq e$. 
\end{notation}

Commen\c{c}ons par un lemme de d\'{e}nombrement.
\begin{lem}\phantomsection\label{lem-denombr}
\begin{enumerate}
 \item Soit $N_{i,j}$ le nombre de couples $(u,v) \in A \times A$ avec $u,v$ unitaires premiers entre eux, $\deg u = i$ et $\deg v=j$ pour $i,j \geq 0$. On a 
\[
 N_{i,j} =
\begin{cases}
(q-1)q^{i+j-1} & \text{si } \min(i,j) >0\; ;\\
q^{\max(i,j)} & \text{sinon}.
\end{cases}
\]
\item Soient $a$ et $b$ dans $\N$. L'ensemble des $\classeA{u}{v} \in \PP^{1}(A)$ avec $u$ et $v$ unitaires, $\deg u \leq a$ et $\deg v \leq b$ poss\`{e}de $(q^{a+b+1}-1)/(q-1)$ \'{e}l\'{e}ments. En particulier, $\PP^{1}(A)_{e}$ poss\`{e}de $q^{2e+1}+1$ \'{e}l\'{e}ments si $e>0$.
\end{enumerate}
\end{lem}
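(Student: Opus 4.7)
Je propose une inversion de M\"{o}bius sur les polyn\^{o}mes unitaires de $A$ pour (1), puis une sommation directe pour (2) en faisant attention \`{a} la normalisation par $\Fq^{\times}$.

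Pour (1), je rappellerai que $\zeta_{A}(x) = \sum_{d \text{ unitaire}} x^{\deg d} = (1-qx)^{-1}$, donc $\sum_{d \text{ unitaire}} \mu(d) x^{\deg d} = 1 - qx$. En particulier, la somme $\sum_{d \text{ unitaire}, \deg d = k} \mu(d)$ vaut $1$ pour $k = 0$, $-q$ pour $k = 1$, et $0$ pour $k \geq 2$. \`{A} partir de l'identit\'{e} $\sum_{d \text{ unitaire}, d \mid u, d \mid v} \mu(d) = [\gcd(u, v) = 1]$, appliqu\'{e}e aux couples unitaires $(u, v)$ de degr\'{e}s $(i, j)$, et en permutant l'ordre des sommes, j'obtiens
\[
N_{i,j} = \sum_{k=0}^{\min(i,j)} q^{i+j-2k} \sum_{d \text{ unitaire}, \deg d = k} \mu(d),
\]
en utilisant qu'un polyn\^{o}me unitaire $d$ de degr\'{e} $k \leq i$ divise exactement $q^{i-k}$ polyn\^{o}mes unitaires de degr\'{e} $i$ (\'{e}crire $u = d u'$ avec $u'$ unitaire de degr\'{e} $i-k$). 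Seuls les termes $k = 0$ (contribuant $q^{i+j}$) et, si $\min(i,j) \geq 1$, $k = 1$ (contribuant $-q^{i+j-1}$) sont non nuls, d'o\`{u} la formule annonc\'{e}e.

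Pour la premi\`{e}re assertion de (2), je remarquerai qu'une classe $\classeA{u}{v}$ \`{a} coordonn\'{e}es unitaires (donc non nulles) n'admet qu'un seul repr\'{e}sentant unitaire-unitaire, puisque multiplier par $\lambda \in \Fq^{\times}$ modifie les deux coefficients dominants par $\lambda$. Le cardinal recherch\'{e} s'\'{e}crit donc $\sum_{0 \leq i \leq a,\, 0 \leq j \leq b} N_{i,j}$, que je calculerai en deux \'{e}tapes. Pour $i \geq 1$ fix\'{e}, la somme interne $\sum_{j=0}^{b} N_{i,j}$ vaut $q^{i} + (q-1) q^{i-1} \sum_{j=1}^{b} q^{j}$ et se simplifie en $q^{i+b}$~; pour $i = 0$, on trouve $(q^{b+1}-1)/(q-1)$. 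Une seconde sommation sur $i$ donnera alors $(q^{a+b+1}-1)/(q-1)$.

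Enfin, pour $\PP^{1}(A)_{e}$, la contrainte unitaire dispara\^{i}t. Je s\'{e}parerai les classes selon la nullit\'{e} des coordonn\'{e}es~: les classes $\classeA{1}{0}$ et $\classeA{0}{1}$ appartiennent \`{a} $\PP^{1}(A)_{e}$ pour tout $e \geq 0$. Pour les classes \`{a} coordonn\'{e}es non nulles, chaque couple coprime $(u, v)$ s'\'{e}crit de fa\c{c}on unique $(\alpha u_{0}, \beta v_{0})$ avec $(u_{0}, v_{0})$ couple unitaire coprime de degr\'{e}s $\leq e$ et $(\alpha, \beta) \in (\Fq^{\times})^{2}$~; en divisant le nombre $(q-1)^{2} \cdot (q^{2e+1}-1)/(q-1) = (q-1)(q^{2e+1}-1)$ de tels couples par $q-1$ (nombre de repr\'{e}sentants par classe via $\Fq^{\times}$), j'obtiens $q^{2e+1}-1$ classes, auxquelles j'ajouterai les deux classes d\'{e}g\'{e}n\'{e}r\'{e}es pour aboutir \`{a} $q^{2e+1}+1$. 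Le seul point de vigilance est le traitement s\'{e}par\'{e} de ces deux classes d\'{e}g\'{e}n\'{e}r\'{e}es, qui ne poss\`{e}dent pas de repr\'{e}sentant unitaire-unitaire et \'{e}chappent donc \`{a} la premi\`{e}re assertion.
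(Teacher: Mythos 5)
Votre preuve est correcte. Pour le point (1), vous inversez l'identit\'{e} de convolution par inversion de M\"{o}bius, en utilisant que $\sum_{\deg d = k} \mu(d)$ vaut $1$, $-q$, puis $0$ pour $k \geq 2$ (c'est-\`{a}-dire $\zeta_A(x)^{-1} = 1-qx$), l\`{a} o\`{u} le papier inverse la m\^{e}me identit\'{e} $q^{2i+l} = \sum_h q^h N_{i-h,i+l-h}$ en multipliant une s\'{e}rie g\'{e}n\'{e}ratrice \`{a} deux variables par $(1-qxy)$ : ce sont deux formulations duales du m\^{e}me calcul. Le point (2) suit exactement la d\'{e}marche du papier (double sommation des $N_{i,j}$, puis d\'{e}nombrement de $\PP^1(A)_e$ en traitant \`{a} part les deux classes $\classeA{1}{0}$ et $\classeA{0}{1}$ qui n'ont pas de repr\'{e}sentant unitaire--unitaire), et vos calculs interm\'{e}diaires sont exacts.
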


\begin{proof}
\begin{enumerate}
 \item Par sym\'{e}trie, il suffit d'\'{e}tablir ces formules pour $i \leq j$, ce qu'on fait \`{a} l'aide d'une fonction g\'{e}n\'{e}ratrice.
Posons $l= j-i$ et calculons $N_{i,i+l}$. Le nombre de couples $(u,v)\in A \times A$ avec $u,v$ unitaires, $\deg u = i$, $\deg v = i+l$ est $q^{2i+l}$. D\'{e}nombrons ceux dont le pgcd unitaire $w$ est de degr\'{e} $0 \leq h \leq i$. Comme $u/w$ et $v/w$ sont unitaires, premiers entre eux, de degr\'{e}s respectivement $i-h$ et $i+l-h$, il existe $N_{i-h,i+l-h}$ tels couples. Ainsi on a, pour tout $l \geq 0$ et $i \geq 0$,
\[
 q^{2i+l} = \sum_{h=0}^{i} q^{h} N_{i-h,i+l-h}.
\]
Prenons $x$ et $y$ des ind\'{e}termin\'{e}es. En multipliant l'\'{e}galit\'{e} pr\'{e}c\'{e}dente par $x^{i} y^{i+l}$, sommant sur $i \geq 0$ et $l \geq 0$ puis arrangeant l'expression obtenue, on trouve
\[
 \sum_{i=0}^{+\infty} \sum_{l=0}^{+\infty} N_{i,i+l} x^{i}y^{i+l} = (1-qxy)\sum_{i=0}^{+\infty} \sum_{l=0}^{+\infty} q^{2i+l} x^{i} y^{i+l}.
\]
Par identification, on obtient $N_{i,i+l} = (q-1)q^{2i+l-1}$ si $i >0$ et $N_{0,l} = q^l$. Ce sont les formules annonc\'{e}es.

\item Notons $\PP^1(A)_{a,b}$ cet ensemble. Notons $F_{a,b}$ l'ensemble des $(u,v) \in A\times A$ avec $u,v$ unitaires premiers entre eux, $\deg u \leq a$ et $\deg v \leq b$. L'application $\varphi : (u,v) \mapsto \classeA{u}{v}$ est clairement une surjection de $F_{a,b}$ sur $\PP^1(A)_{a,b}$. Comme on impose \`{a} $u$ et $v$ d'\^{e}tre unitaires, $\varphi$ est aussi injective donc bijective. De plus, il est clair que 
\[
\# F_{a,b} = \sum_{i=0}^{a} \sum_{j=0}^{b} N_{i,j}. 
\]
En substituant les expressions de $N_{i,j}$ obtenues pr\'{e}c\'{e}demment, on obtient
\[
\# \PP^1(A)_{a,b} = \# F_{a,b} = (q^{a+b+1}-1)/(q-1). 
\]
Enfin l'ensemble $\PP^1(A)_e$ est r\'{e}union disjointe de $\classeA{\lambda u}{v}$ pour $(u,v)\in F_{e,e}$, $\lambda \in \Fq^{\times}$, et des deux \'{e}l\'{e}ments $\classeA{1}{0}$ et $\classeA{0}{1}$. Donc $\PP^1(A)_e$ est de cardinal $q^{2e+1}+1$.
\end{enumerate}
\end{proof}

Dans ce qui suit, $\pp$ est un id\'{e}al \emph{premier} de $A$, de degr\'{e} not\'{e} $d$.
\begin{lem}\phantomsection\label{lem-inj-dteproj}
Supposons $e<d/2$. L'application canonique
\[
\begin{array}{rcl}
 \PP^{1}(A)_{e} & \longrightarrow & \PP^{1}(A/\pp)\\
\classeA{u}{v} & \longmapsto & (u:v)
\end{array}
\]
est injective.
\end{lem}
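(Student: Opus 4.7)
\medskip
\noindent\textbf{Plan de preuve.} Soient $\classeA{u_1}{v_1}$ et $\classeA{u_2}{v_2}$ deux \'{e}l\'{e}ments de $\PP^1(A)_e$ ayant la m\^{e}me image dans $\PP^1(A/\pp)$. Je commencerais par traduire cette co\"{i}ncidence en l'existence d'un $w \in A$ tel que $\pp \nmid w$ et $u_1 \equiv w u_2$, $v_1 \equiv w v_2 \pmod{\pp}$. Le but est d'en d\'{e}duire l'existence d'un scalaire $\lambda \in \Fq^{\times}$ tel que $u_1 = \lambda u_2$ et $v_1 = \lambda v_2$.

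L'id\'{e}e centrale est de consid\'{e}rer le d\'{e}terminant crois\'{e} $D := u_1 v_2 - u_2 v_1$. En multipliant la premi\`{e}re congruence par $v_2$ et la seconde par $u_2$, on obtient imm\'{e}diatement $u_1 v_2 \equiv w u_2 v_2 \equiv u_2 v_1 \pmod{\pp}$, donc $\pp \mid D$. L'\'{e}tape-cl\'{e}, de nature non-archim\'{e}dienne, est alors l'estimation de degr\'{e}
\[
\deg D \leq \max(\deg u_1 + \deg v_2,\ \deg u_2 + \deg v_1) \leq 2e < d = \deg \pp.
\]
Comme $\pp$ est premier, un polyn\^{o}me non nul qu'il divise est de degr\'{e} $\geq d$. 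Donc $D = 0$, c'est-\`{a}-dire $u_1 v_2 = u_2 v_1$ dans $A$.

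Je conclurais ensuite par un argument \'{e}l\'{e}mentaire d'arithm\'{e}tique dans l'anneau principal $A$. Les conditions $(u_1)+(v_1) = A$ et $(u_2)+(v_2) = A$ impliquent que $u_1 \mid u_2$ et r\'{e}ciproquement, donc $u_1$ et $u_2$ sont associ\'{e}s dans $A$~: il existe $\lambda \in \Fq^{\times}$ avec $u_1 = \lambda u_2$. En substituant dans $u_1 v_2 = u_2 v_1$, il vient $v_1 = \lambda v_2$ (\`{a} moins que $u_1 = u_2 = 0$, auquel cas le m\^{e}me raisonnement appliqu\'{e} \`{a} $v_1$ et $v_2$ fournit le scalaire cherch\'{e}). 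Ainsi $\classeA{u_1}{v_1} = \classeA{u_2}{v_2}$ dans $\PP^1(A)$, ce qui est l'injectivit\'{e} voulue. Je ne vois pas d'obstacle notable~: la preuve repose enti\`{e}rement sur l'in\'{e}galit\'{e} ultram\'{e}trique du degr\'{e}, qui justifie a posteriori l'hypoth\`{e}se $e < d/2$.
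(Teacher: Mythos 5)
Votre preuve est correcte et suit essentiellement la même démarche que celle de l'article~: le déterminant croisé $u_1v_2-u_2v_1$ appartient à $\pp$, la majoration $\deg \leq 2e < d$ le force à être nul, puis l'argument de divisibilité dans $A$ (primalité relative de $u_i$ et $v_i$) fournit le scalaire $\lambda \in \Fq^{\times}$. Votre traitement explicite du cas dégénéré $u_1=u_2=0$ est un petit raffinement bienvenu mais ne change pas la nature de l'argument.
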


\begin{proof}
Supposons $(u_1:v_1)=(u_2:v_2)$ pour des polyn\^{o}mes $u_1,u_2,v_1,v_2$ de degr\'{e} $\leq e$ avec $(u_1)+(v_1)=(u_2)+(v_2)=A$. Alors $u_1 v_2-u_2 v_1$ appartient \`{a} l'id\'{e}al $\pp$. Comme ce polyn\^{o}me est de degr\'{e} $\leq 2e < d = \deg \pp$, il est n\'{e}cessairement nul. Donc $u_1$, qui divise $u_2 v_1$ et est premier \`{a} $v_1$, divise $u_2$. De m\^{e}me, $u_2$ divise $u_1$. Ainsi il existe $\lambda \in \Fq^{\times}$ avec $u_2 = \lambda u_1$. On a alors $v_2 = \lambda v_1$ puis $\classeA{u_1}{v_1} = \classeA{u_2}{v_2}$. L'application est injective.
\end{proof}

On constate que l'ensemble $\PP^{1}(A)_e$ est stable par l'action \`{a} droite des matrices $\sigma$, $\tau$ et $\delta \in \Delta$. L'objet suivant est donc bien d\'{e}fini.

\begin{notation}
Si $R$ est un anneau commutatif, on note $M_{e}(R)$ le quotient du $R$-module libre $R[\PP^{1}(A)_{e}]$ par le sous-module engendr\'{e} par les \'{e}l\'{e}ments $(x)+(x\sigma)$, $(x)+(x\tau)+(x\tau^2)$ et $(x)-(x\delta)$ pour $\delta \in \Delta$ et $x \in \PP^1(A)_{e}$. On appelle respectivement ces \'{e}l\'{e}ments les \emph{relations \`{a} deux termes}, \emph{trois termes} et \emph{diagonales}.
\end{notation}

Jusqu'\`{a} la fin de la section~\ref{soussection-variante} on compare l'espace des symboles modulaires \`{a} $M_e(R)$ pour certaines valeurs de $e$.

\subsubsection{Cas \texorpdfstring{$d$}{d} impair}

\begin{prop}\phantomsection\label{prop-dimp}
Supposons $d = \deg \pp$ impair. L'application du lemme~\ref{lem-inj-dteproj} induit une bijection $\PP^{1}(A)_{(d-1)/2} \to \PP^{1}(A/\pp)$ ainsi qu'un isomorphisme $M_{(d-1)/2}(R) \simeq \smod_{\pp}(R)$.
\end{prop}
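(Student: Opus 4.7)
Le plan se déroule en deux étapes : d'abord on prouve la bijection entre $\PP^1(A)_{(d-1)/2}$ et $\PP^1(A/\pp)$, puis on en déduit l'isomorphisme des deux $R$-modules quotients en invoquant la présentation de Teitelbaum (théorème~\ref{theo-presentationmanin}).

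Pour la première étape, posons $e=(d-1)/2$. Comme $d$ est impair, on a $e<d/2$, donc le lemme~\ref{lem-inj-dteproj} fournit l'injectivité de l'application canonique $\PP^1(A)_e \to \PP^1(A/\pp)$. Il suffit alors de comparer les cardinaux. D'une part, $\pp$ étant premier de degré $d$, l'anneau $A/\pp$ est un corps à $q^d$ éléments et $\#\PP^1(A/\pp) = q^d+1$. D'autre part, le lemme~\ref{lem-denombr} donne $\# \PP^1(A)_e = q^{2e+1}+1 = q^d + 1$. Une injection entre ensembles finis de même cardinal étant bijective, on conclut.

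Pour la seconde étape, il s'agit de voir que cette bijection est équivariante pour les actions à droite de $\sigma$, $\tau$, et des matrices diagonales $\delta\in\Delta$. Pour $x = \classeA{u}{v} \in \PP^1(A)_e$, on a $x\sigma=\classeA{-v}{u}$, $x\tau = \classeA{v}{-u-v}$ et $x\delta = \classeA{\lambda u}{v}$ pour $\delta = \matrice{\lambda}{0}{0}{1}$ ; toutes les composantes sont de degré $\leq e$ (car $\deg(-u-v)\leq \max(\deg u,\deg v)\leq e$ par non-archimédianité, cf.~remarque~\ref{rem-nonarch}), donc $\PP^1(A)_e$ est stable sous ces trois actions. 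L'application $\classeA{u}{v} \mapsto (u:v)$ commute avec ces actions car elles sont par multiplication matricielle à coefficients dans $\Fq \subset A$. Cette bijection équivariante s'étend en un isomorphisme de $R$-modules $R[\PP^1(A)_e] \simfleche R[\PP^1(A/\pp)]$ qui envoie bijectivement le sous-module engendré par les relations à deux termes, à trois termes et diagonales de $M_e(R)$ sur celui définissant $\smod_\pp(R)$ d'après le théorème~\ref{theo-presentationmanin} (en tenant compte de la remarque~\ref{rem-Delta} pour ramener les relations diagonales à celles pour $\delta \in \Delta$). En passant aux quotients, on obtient l'isomorphisme $M_e(R) \simeq \smod_\pp(R)$ annoncé.

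L'étape la moins automatique est le dénombrement de $\PP^1(A)_e$, déjà traité dans le lemme~\ref{lem-denombr} ; le reste de l'argument est une vérification de compatibilité entre la présentation de Teitelbaum et la définition de $M_e(R)$, rendue immédiate par l'observation que $\PP^1(A)_e$ est $\G(\Fq)$-stable. Aucun obstacle sérieux ne subsiste.
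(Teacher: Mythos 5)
Votre démonstration est correcte et suit essentiellement la même démarche que celle de l'article : injectivité par le lemme~\ref{lem-inj-dteproj}, bijectivité par comparaison des cardinaux \`{a} l'aide du lemme~\ref{lem-denombr}, puis équivariance sous $\sigma$, $\tau$ et $\Delta$ pour transporter la présentation du théorème~\ref{theo-presentationmanin} (avec la remarque~\ref{rem-Delta}). Vous explicitez seulement un peu plus la vérification de stabilité de $\PP^1(A)_e$ sous ces matrices, que l'article avait déjà signalée avant la définition de $M_e(R)$.
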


\begin{proof}
 L'application \'{e}tant injective par le lemme~\ref{lem-inj-dteproj}, on obtient sa bijectivit\'{e} par un d\'{e}nombrement. D'apr\`{e}s le lemme~\ref{lem-denombr} avec $a=b=(d-1)/2$, l'ensemble $\PP^{1}(A)_{(d-1)/2}$ a $(q^{d}+1)$ \'{e}l\'{e}ments. Comme $\pp$ est premier, il en est de m\^{e}me de l'ensemble $\PP^{1}(A/\pp)$. Donc l'application est bijective. Par ailleurs, elle est aussi \'{e}quivariante sous l'action de $\sigma$, $\tau$ et $\Delta$. L'isomorphisme se d\'{e}duit alors de la d\'{e}finition de $M_{(d-1)/2}(R)$ et de la pr\'{e}sentation finie (th\'{e}or\`{e}me~\ref{theo-presentationmanin} et remarque~\ref{rem-Delta}).
\end{proof}

\begin{remarque}\phantomsection\label{rem-nonarch}
La matrice $\tau$ op\`{e}re sur $\PP^{1}(A)_e$ car on dispose de l'in\'{e}galit\'{e}
\[
 \deg (u+v) \leq \max (\deg u, \deg v)
\]
pour $u,v \in A$. La construction du <<~mod\`{e}le~>> $M_{(d-1)/2}(R)$ de $\smod_\pp(R)$ est donc possible car la norme $|x| = q^{\deg (x)}$ ($x \in A$) associ\'{e}e \`{a} la place $\infty$ de $K$ est non-archim\'{e}dienne. Manin a donn\'{e} une pr\'{e}sentation finie des symboles modulaires de poids $2$ pour $\Gamma_0(n) \subset \SL_2(\Z)$, similaire au th\'{e}or\`{e}me~\ref{theo-presentationmanin}~: c'est le quotient sans torsion du $\Z$-module libre $\Z[\PP^{1}(\Z / n \Z)]$ par des relations \`{a} deux et trois termes (\cite[th.~2.7]{manin-symbolesmodulaires}). Une adaptation na\"{i}ve de notre d\'{e}marche, en rempla\c{c}ant la norme $|\cdot|$ par la valeur absolue sur $\R$, ne semble donc pas permettre de r\'{e}soudre la pr\'{e}sentation de Manin des symboles modulaires classiques.
\end{remarque}

\subsubsection{Cas \texorpdfstring{$d$}{d} pair}

\begin{notation}
Dans cette partie uniquement on suppose $d$ pair et on pose $e=d/2$. On note $P_e$ (resp. $S_e$) le sous-ensemble de $\PP^1(A)_e$ form\'{e} des $\classeA{u}{v}$ avec $\deg v \leq e-1$ (resp. $\deg u = e$ et $\deg v \leq e-1$). On a donc la partition $P_e = \PP^1(A)_{e-1} \sqcup S_e$.
\end{notation}

\begin{lem}\phantomsection\label{lem-pibij}
L'application canonique
\[
\begin{array}{rrcl}
 \pi : &P_{e} & \longrightarrow & \PP^{1}(A/\pp)\\
& \classeA{u}{v} & \longmapsto & (u:v)
\end{array}
\]
est bijective.
\end{lem}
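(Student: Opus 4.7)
The plan is to prove injectivity by the same degree-based argument as Lemma~\ref{lem-inj-dteproj} (taking advantage of the asymmetric bounds in the definition of $P_e$), and then deduce bijectivity from a cardinality count showing $\# P_e = q^d + 1 = \#\PP^{1}(A/\pp)$.

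For injectivity, I would suppose $\classeA{u_1}{v_1}, \classeA{u_2}{v_2} \in P_e$ have the same image, so $u_1 v_2 - u_2 v_1 \in \pp$. The bounds $\deg u_i \leq e$ and $\deg v_i \leq e-1$ give $\deg(u_1 v_2 - u_2 v_1) \leq 2e - 1 = d - 1 < d = \deg \pp$, forcing this polynomial to vanish. From $u_1 v_2 = u_2 v_1$ together with $\gcd(u_i,v_i)=1$, the usual divisibility argument (as in Lemma~\ref{lem-inj-dteproj}) shows $u_1$ and $u_2$ are associates, hence $\classeA{u_1}{v_1} = \classeA{u_2}{v_2}$ in $\PP^1(A)$. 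The degenerate case $u_1 = 0$ (which forces $u_2 = 0$ and $v_i \in \Fq^{\times}$) is handled separately.

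For surjectivity, since $\pi$ is injective and both sets are finite, it is enough to match cardinalities. Using the partition $P_e = \PP^{1}(A)_{e-1} \sqcup S_e$, Lemma~\ref{lem-denombr}(2) gives $\# \PP^{1}(A)_{e-1} = q^{2e-1}+1 = q^{d-1}+1$ (with a direct check $q+1 = q^{d-1}+1$ in the boundary case $e=1$). For $S_e$ I would normalize the representative so that $u$ is unitary of degree exactly $e$; the class is then determined by such a $u$ together with $v \in A$ of degree at most $e-1$ with $\gcd(u,v)=1$. Partitioning by the exact degree $j$ of $v$ and applying Lemma~\ref{lem-denombr}(1), with a factor $(q-1)$ absorbing the leading coefficient of $v$ (or, for $j=0$, taking $v$ over $\Fq^{\times}$ directly), a short computation yields $\# S_e = (q-1)q^{d-1}$. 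Adding the two contributions gives $(q^{d-1}+1)+(q-1)q^{d-1} = q^d+1$, as required.

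The main technical step is the enumeration of $S_e$: since a class $\classeA{u}{v}$ can be normalized to have $u$ unitary but generally not $v$ simultaneously unitary, Lemma~\ref{lem-denombr}(2) does not apply directly and one must fall back to summing Lemma~\ref{lem-denombr}(1). Everything else is a direct transcription of the arguments already used for Proposition~\ref{prop-dimp}, with the asymmetric degree bounds in $P_e$ playing the crucial role that compensates for the absence of a half-integer degree when $d = 2e$ is even.
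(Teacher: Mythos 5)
Votre démonstration est correcte et suit essentiellement la même démarche que celle de l'article : l'injectivité par l'argument de degré du lemme~\ref{lem-inj-dteproj} (avec la borne $2e-1<d$ issue des bornes asymétriques), puis la surjectivité par un dénombrement fondé sur le lemme~\ref{lem-denombr}. La seule différence est d'organisation : l'article compte $P_e$ d'un seul coup via une bijection avec un ensemble de couples $(u,\lambda v)$ augmenté de $(1,0)$ et $(0,1)$, tandis que vous sommez séparément $\#\PP^1(A)_{e-1}=q^{d-1}+1$ et $\#S_e=(q-1)q^{d-1}$, les deux calculs aboutissant bien à $q^{d}+1$.
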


\begin{proof}
L'injectivit\'{e} s'obtient par un argument similaire \`{a} celui du lemme~\ref{lem-inj-dteproj}. Prouvons la bijectivit\'{e} par un d\'{e}nombrement. Soit $E$ l'ensemble r\'{e}union de $(0,1) \in A \times A$ et des couples $(u,v) \in A \times A$ avec $u$ unitaire, $u$ et $v$ premiers entre eux, $\deg u \leq e$ et $\deg v \leq e-1$. On voit facilement que $E$ et $P_e$ sont en bijection. De plus, outre $(1,0)$ et $(0,1)$, l'ensemble $E$ est constitu\'{e} des $(u,\lambda  v)$ avec $\lambda \in\Fq^{\times}$, $u$ et $v$ unitaires premiers entre eux, $\deg u \leq e$, $\deg v \leq e-1$. Par le lemme~\ref{lem-denombr}, son cardinal est alors 
\[
 2+(q-1) \sum_{0 \leq i \leq e-1} \sum_{0 \leq j \leq e} N_{i,j} = q^{2e}+1.
\]
Or l'ensemble $\PP^1(A/\pp)$ a aussi $q^{2e}+1$ \'{e}l\'{e}ments. Donc $\pi$ est bijective.
\end{proof}

Remarquons que les matrices $\sigma$ et $\tau$ n'op\`{e}rent pas sur l'ensemble $P_e$ (plus pr\'{e}cisement, elles op\`{e}rent sur $\PP^1(A)_{e-1}$ mais pas sur $S_e$). Contrairement au cas $d$ impair, on ne peut donc consid\'{e}rer le quotient de $R[P_e]$ par les relations induites par ces matrices. Le lemme suivant suffit \`{a} contourner ce probl\`{e}me.

\begin{lem}\phantomsection\label{lem-actionpi}
Les matrices $\sigma$, $\tau$ et celles de $\Delta$ op\`{e}rent sur $\pi(\PP^1(A)_{e-1})$ et sur $\pi(S_e)$.
\end{lem}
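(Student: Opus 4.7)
The plan is to reduce the stability of $\pi(S_e)$ to that of $\pi(\PP^1(A)_{e-1})$, then to verify the latter by a direct degree computation.

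First, recall from Lemma~\ref{lem-pibij} that $\pi : P_e \to \PP^1(A/\pp)$ is a bijection, and by construction $P_e = \PP^1(A)_{e-1} \sqcup S_e$. Consequently, $\PP^1(A/\pp)$ is the disjoint union of $\pi(\PP^1(A)_{e-1})$ and $\pi(S_e)$. Since $\sigma$, $\tau$ and each $\delta \in \Delta$ are invertible in $\G(\Fq)$, their right action on $\PP^1(A/\pp)$ is bijective. In particular, if $\pi(\PP^1(A)_{e-1})$ is stable under such a matrix, so is its complement $\pi(S_e)$. It therefore suffices to handle $\pi(\PP^1(A)_{e-1})$.

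For this, I would show that $\PP^1(A)_{e-1}$ itself is stable under the right action of $\sigma$, $\tau$ and $\Delta$ on $\PP^1(A)$. This suffices because the restriction of $\pi$ to $\PP^1(A)_{e-1}$ is equivariant: both actions are given by the same matrix multiplication, before versus after reducing modulo $\pp$. Using the formulas $\classeA{u}{v}\sigma = \classeA{-v}{u}$, $\classeA{u}{v}\tau = \classeA{v}{-u-v}$ and $\classeA{u}{v}\delta = \classeA{\lambda u}{v}$ for $\delta = \matrice{\lambda}{0}{0}{1}$, the degree bound $\max(\deg u, \deg v) \leq e-1$ is immediately preserved for $\sigma$ and $\delta$ (and coprimality is preserved in all three cases by elementary checks). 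For $\tau$, one invokes the non-archimedean inequality $\deg(u+v) \leq \max(\deg u, \deg v)$ (already highlighted in Remark~\ref{rem-nonarch}) to conclude that $\deg(-u-v) \leq e-1$ whenever $\deg u, \deg v \leq e-1$.

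There is essentially no obstacle here. The only mildly subtle point is the complementary argument, which is necessary because, as noted in the text just above the lemma, $\sigma$ and $\tau$ do \emph{not} stabilize $S_e$ at the level of $\PP^1(A)$; stability of $\pi(S_e)$ emerges only after passing to $\PP^1(A/\pp)$, by combining the partition $P_e = \PP^1(A)_{e-1} \sqcup S_e$ with the bijectivity of the matrix actions.
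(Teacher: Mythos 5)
Your proof is correct and follows exactly the paper's argument: stability of $\pi(\PP^1(A)_{e-1})$ via equivariance of $\pi$ and stability of $\PP^1(A)_{e-1}$ itself (which the paper had already observed for $\PP^1(A)_e$, using the same non-archimedean inequality), then stability of the complement $\pi(S_e)$ inside $\PP^1(A/\pp)$ via the partition from Lemme~\ref{lem-pibij}. You merely spell out the degree checks in more detail than the paper does.
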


\begin{proof}
Par la bijection du lemme~\ref{lem-pibij}, on \'{e}crit $\PP^1(A/\pp)$ comme la r\'{e}union disjointe $\PP^1(A/\pp) = \pi(\PP^1(A)_{e-1}) \sqcup \pi(S_e)$. Les matrices consid\'{e}r\'{e}es op\`{e}rent sur $\PP^1(A/\pp)$ et $\pi(\PP^1(A)_{e-1})$ (pour ce dernier, car elles op\`{e}rent d\'{e}j\`{a} sur $\PP^1(A)_{e-1}$ et $\pi$ est \'{e}quivariant par ces matrices). Elles op\`{e}rent donc aussi sur l'ensemble $\pi(S_e)$.
\end{proof}

\begin{prop}\phantomsection\label{prop-dpair}
L'application $\PP^1(A)_{e-1} \subset P_{e}\stackrel{\pi}{\to} \PP^1(A/\pp)$ induit un homomorphisme injectif $M_{e-1}(R) \to \smod_\pp(R)$.
\end{prop}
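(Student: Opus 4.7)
Le plan est d'exploiter la partition $\PP^1(A/\pp) = \pi(\PP^1(A)_{e-1}) \sqcup \pi(S_e)$ fournie par les lemmes~\ref{lem-pibij} et \ref{lem-actionpi} pour obtenir une d\'{e}composition de $\smod_\pp(R)$ en somme directe dont l'un des facteurs s'identifiera canoniquement \`{a} $M_{e-1}(R)$~; l'homomorphisme de l'\'{e}nonc\'{e} sera alors l'inclusion d'un facteur direct, et donc injectif.

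J'observerais d'abord que $e-1<d/2$, si bien que le lemme~\ref{lem-inj-dteproj} donne l'injectivit\'{e} de l'application canonique $\PP^1(A)_{e-1} \to \PP^1(A/\pp)$. Les matrices $\sigma$, $\tau$ et celles de $\Delta$ pr\'{e}servent $\PP^1(A)_{e-1}$ (pour $\tau$, gr\^{a}ce \`{a} l'in\'{e}galit\'{e} non-archim\'{e}dienne $\deg(u+v) \leq \max(\deg u, \deg v)$) et cette application est \'{e}quivariante sous leur action. Par $R$-lin\'{e}arit\'{e}, et d'apr\`{e}s la pr\'{e}sentation finie (th\'{e}or\`{e}me~\ref{theo-presentationmanin} et remarque~\ref{rem-Delta}), elle s'\'{e}tend en un homomorphisme bien d\'{e}fini $M_{e-1}(R) \to \smod_\pp(R)$.

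Viendrait ensuite l'\'{e}tape principale, la preuve de l'injectivit\'{e}. J'\'{e}crirais $R[\PP^1(A/\pp)]$ comme la somme directe
\[
R[\PP^1(A/\pp)] = R[\pi(\PP^1(A)_{e-1})] \oplus R[\pi(S_e)].
\]
Le lemme~\ref{lem-actionpi} assure que chacun des deux facteurs est stable par l'action \`{a} droite de $\sigma$, $\tau$ et $\Delta$~; par cons\'{e}quent, chaque relation de Manin--Teitelbaum (\`{a} deux termes, \`{a} trois termes ou diagonale) est interne \`{a} l'un de ces deux facteurs. Le sous-module engendr\'{e} par toutes les relations est donc lui-m\^{e}me somme directe de ses intersections avec chaque facteur. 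Par passage au quotient, on obtient une d\'{e}composition $\smod_\pp(R) \simeq A \oplus B$, o\`{u} $A$ est le quotient de $R[\pi(\PP^1(A)_{e-1})]$ par les relations internes correspondantes.

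Pour conclure, j'observerais que la restriction de $\pi$ \`{a} $\PP^1(A)_{e-1}$ est une bijection \'{e}quivariante sur $\pi(\PP^1(A)_{e-1})$, qui identifie donc $M_{e-1}(R)$ avec $A$~; l'homomorphisme cherch\'{e} s'identifie alors \`{a} l'inclusion de $A$ comme facteur direct de $\smod_\pp(R)$, d'o\`{u} l'injectivit\'{e}. Le point potentiellement d\'{e}licat est la s\'{e}paration des relations en deux blocs disjoints, mais c'est pr\'{e}cis\'{e}ment ce que fournit le lemme~\ref{lem-actionpi} d\'{e}j\`{a} \'{e}tabli~; la preuve se r\'{e}duit donc essentiellement \`{a} ce sch\'{e}ma de d\'{e}composition directe.
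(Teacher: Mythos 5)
Votre démonstration est correcte et suit essentiellement la même démarche que celle de l'article~: décomposition de $R[\PP^1(A/\pp)]$ selon la partition $\pi(\PP^1(A)_{e-1}) \sqcup \pi(S_e)$, stabilité de chaque bloc sous $\sigma$, $\tau$ et $\Delta$ via le lemme~\ref{lem-actionpi}, d'où une scission du module des relations en deux facteurs et l'injectivité par identification à un facteur direct. La seule différence est cosmétique (vous passez au quotient pour exhiber $\smod_\pp(R) \simeq A \oplus B$ là où l'article calcule l'intersection $Rel(\PP^1(A/\pp)) \cap R[\pi(\PP^1(A)_{e-1})]$), ce qui revient au même.
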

\begin{proof}
Notons $i : R[\PP^1(A)_{e-1}] \to R[\PP^1(A/\pp)]$ l'homomorphisme injectif d\'{e}duit de $\PP^1(A)_{e-1} \hookrightarrow \PP^1(A/\pp)$. Si $S$ est un ensemble fini sur lequel op\`{e}rent $\sigma$, $\tau$ et $\Delta$, on notera $Rel(S)$ le sous-module de $R[S]$ engendr\'{e} par les relations \`{a} deux termes, trois termes et diagonales correspondant \`{a} ces matrices. Il s'agit de montrer que tout \'{e}l\'{e}ment de $R[\PP^1(A)_{e-1}]$ dont l'image par $i$ est dans $Rel(\PP^1(A/\pp))$ est lui-m\^{e}me dans $Rel(\PP^1(A)_{e-1})$. L'image par la bijection $\pi$ de la partition de $P_e$ donne $\PP^1(A/\pp) = \pi(\PP^1(A)_{e-1}) \sqcup \pi(S_e)$. On a donc la d\'{e}composition en somme directe 
\[
 R[\PP^1(A/\pp)] = R[\pi(\PP^1(A)_{e-1})] \oplus R[\pi(S_e)].
\]
Par ailleurs, d'apr\`{e}s le lemme~\ref{lem-actionpi}, les sous-modules $Rel(\pi(\PP^1(A)_{e-1}))$ et $Rel(\pi(S_e))$ sont bien d\'{e}finis et on a $Rel(\PP^1(A/\pp)) = Rel(\pi(\PP^1(A)_{e-1})) \oplus Rel(\pi(S_e))$. Donc l'intersection de $Rel(\PP^1(A/\pp))$ et $R[\pi(\PP^1(A)_{e-1})]$ est $Rel(\pi(\PP^1(A)_{e-1}))$. Par 
injectivit\'{e} de $i$, l'assertion est d\'{e}montr\'{e}e.
\end{proof}

\subsubsection{Cas g\'{e}n\'{e}ral}

Dans l'\'{e}nonc\'{e} suivant, nous ne faisons plus d'hypoth\`{e}se de parit\'{e} sur $d = \deg \pp$. Les propositions~\ref{prop-dimp} et \ref{prop-dpair} ont mis en \'{e}vidence un plongement de $M_e(R)$ dans l'espace de symboles modulaires $\smod_\pp(R)$ pour certaines valeurs maximales de $e$. On \'{e}tend ces r\'{e}sultats aux valeurs inf\'{e}rieures de $e$.
\begin{lem}\phantomsection\label{lem-comparaisonvariantesm}
Supposons $e <d/2$. L'application du lemme~\ref{lem-inj-dteproj} induit un homomorphisme injectif $M_e(R) \to \smod_\pp(R)$.
\end{lem}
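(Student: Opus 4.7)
The plan is to reduce the assertion to the direct--sum decomposition argument already used in the proof of Proposition~\ref{prop-dpair}. First, by Lemma~\ref{lem-inj-dteproj} the canonical map $\iota : \PP^1(A)_e \to \PP^1(A/\pp)$ is injective, so the induced map of free $R$--modules $R[\PP^1(A)_e] \hookrightarrow R[\PP^1(A/\pp)]$ is injective and we may identify $R[\PP^1(A)_e]$ with $R[\iota(\PP^1(A)_e)]$. It then suffices to show that any element of $R[\PP^1(A)_e]$ whose image lies in $Rel(\PP^1(A/\pp))$ already lies in $Rel(\PP^1(A)_e)$.

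The key point is that, under the hypothesis $e < d/2$, both $\iota(\PP^1(A)_e)$ and its complement $C = \PP^1(A/\pp)\setminus\iota(\PP^1(A)_e)$ are stable under the right action of $\sigma$, $\tau$ and the elements of $\Delta$. Stability of $\PP^1(A)_e$ itself under these matrices is the already--invoked non--archimedean fact that $\deg(u+v) \leq \max(\deg u,\deg v)$: for $\classeA{u}{v} \in \PP^1(A)_e$ one computes that $\classeA{u}{v}\sigma = \classeA{-v}{u}$, $\classeA{u}{v}\tau = \classeA{v}{-u-v}$ and $\classeA{u}{v}\delta$ all still have both coordinates of degree $\leq e$. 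Since $\iota$ is $\G(\Fq)$--equivariant, $\iota(\PP^1(A)_e)$ is stable in $\PP^1(A/\pp)$; because $\sigma$, $\tau$ and each $\delta\in\Delta$ act as bijections of $\PP^1(A/\pp)$ of finite order, their inverses also preserve $\iota(\PP^1(A)_e)$, hence they all preserve the complement $C$ as well.

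From these stabilities we obtain, exactly as in the proof of Proposition~\ref{prop-dpair}, a direct--sum decomposition
\[
R[\PP^1(A/\pp)] = R[\iota(\PP^1(A)_e)] \oplus R[C],
\qquad Rel(\PP^1(A/\pp)) = Rel(\iota(\PP^1(A)_e)) \oplus Rel(C),
\]
since every relation of the three types has all its terms in a single orbit of $\langle\sigma\rangle$, $\langle\tau\rangle$ or $\Delta$, hence entirely in $\iota(\PP^1(A)_e)$ or entirely in $C$. Intersecting with $R[\iota(\PP^1(A)_e)]$ gives $Rel(\PP^1(A/\pp)) \cap R[\iota(\PP^1(A)_e)] = Rel(\iota(\PP^1(A)_e))$, which via $\iota$ identifies with $Rel(\PP^1(A)_e)$. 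This is exactly the required injectivity of $M_e(R) \to \smod_\pp(R)$. The only real step is checking the stability of $\PP^1(A)_e$ under $\tau$, which is precisely where the ultrametric inequality is used; everything else is a formal consequence, as in Proposition~\ref{prop-dpair}.
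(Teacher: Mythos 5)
Your proof is correct, and it rests on the same mechanism as the paper's --- partition the index set into pieces stable under $\sigma$, $\tau$ and $\Delta$, deduce a direct-sum splitting of both the free module and the module of relations, and intersect --- but you deploy it at a different place. The paper first invokes Propositions~\ref{prop-dimp} and~\ref{prop-dpair} to settle the maximal admissible value of $e$ (namely $(d-1)/2$ or $(d-2)/2$ according to the parity of $d$), and is then left only with checking that $M_{e'}(R)\to M_{e}(R)$ is injective for $e'\le e$; for that it splits $\PP^1(A)_{e}$ into $\PP^1(A)_{e'}$ and its complement inside $\PP^1(A)$, where stability under the three matrices is immediate from the degree conditions. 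You instead work directly in $\PP^1(A/\pp)$ for an arbitrary $e<d/2$ and split off the complement $C$ of the image of $\PP^1(A)_{e}$ there; since the stability of $C$ is not visible from degree conditions, you need the observation that a finite-order bijection preserving a subset of a finite set preserves its complement --- which is exactly the content of the paper's Lemma~\ref{lem-actionpi}, used there for Proposition~\ref{prop-dpair}. Your version is self-contained (it does not take Propositions~\ref{prop-dimp} and~\ref{prop-dpair} as inputs, and in fact re-proves the injectivity part of the latter in passing), whereas the paper factors the work through the maximal case so that all stability checks remain elementary degree computations. Both arguments are complete; the only point worth making fully explicit in yours is that the kernel of $R[\PP^1(A/\pp)]\to\smod_\pp(R)$ is exactly $Rel(\PP^1(A/\pp))$, which is Teitelbaum's presentation (Theorem~\ref{theo-presentationmanin}) and is what turns your intersection computation into the injectivity of $M_e(R)\to\smod_\pp(R)$.
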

\begin{proof}
Les propositions~\ref{prop-dimp} et \ref{prop-dpair} l'ont d\'{e}j\`{a} d\'{e}montr\'{e} pour les valeurs maximales enti\`{e}res de $e$ (c'est-\`{a}-dire $(d-1)/2$ si $d$ est impair, $(d-2)/2$ si $d$ est pair). Il suffit alors de prouver que, pour tout $e'\leq e$, l'homomorphisme canonique $M_{e'}(R) \to M_e(R)$ est injectif. \'{E}crivons $\PP^1(A)_e$ comme r\'{e}union disjointe de $\PP^1(A)_{e'}$ et de l'ensemble $S$ des couples $\classeA{u}{v} \in \PP^1(A)_e$ avec ($\deg u > e'$ ou $\deg v > e'$). On v\'{e}rifie facilement que $S$ est stable par l'action des matrices $\sigma$, $\tau$ et celles de $\Delta$. Cela entra\^{i}ne les sommes directes $R[\PP^1(A)_{e}] = R[\PP^1(A)_{e'}] \oplus R[S]$ et $Rel(\PP^1(A)_e) = Rel(\PP^1(A)_{e'}) \oplus Rel(S)$ avec les notations de la preuve de la proposition~\ref{prop-dpair}. Donc $Rel(\PP^1(A)_e) \cap R[\PP^1(A)_{e'}] = Rel(\PP^1(A)_{e'})$. L'injectivit\'{e} est d\'{e}montr\'{e}e.
\end{proof}

\subsection{D\'{e}composition naturelle en sous-espaces}\label{soussection-decomp}

\begin{notation}
Soit $k \geq 0$. Notons $C_k$ l'ensemble des $\classeA{u}{v} \in \PP^{1}(A)_k$ avec $u$ ou $v$ de degr\'{e} $k$. Si $k\geq 1$, $C_k$ est le compl\'{e}mentaire de $\PP^1(A)_{k-1}$ dans $\PP^1(A)_{k}$ et on constate qu'il est stable par l'action de $\sigma$, $\tau$ et $\Delta$. Notons ainsi $N_{k}(R)$ le $R$-module quotient de $R[C_k]$ par le sous-module engendr\'{e} par $(x)+(x\sigma)$, $(x)+(x\tau)+(x\tau^2)$ et $(x)-(x\delta)$ pour $x \in C_k$, $\delta \in \Delta$. On l'identifie \`{a} un sous-module de $M_k(R)$.

Si $k < (\deg \pp) / 2$, notons $\mathbf{N}_{\pp,k}(R)$ l'image de $N_k(R)$ dans $\smod_{\pp}(R)$ par l'injection du lemme~\ref{lem-comparaisonvariantesm}. En d'autres termes, $\mathbf{N}_{\pp,k}(R)$ est le sous-module engendr\'{e} par les symboles modulaires $\xi(u:v)$ avec $u,v$ premiers entre eux, de degr\'{e}s $\leq k$ et ($\deg u = k$ ou $\deg v = k$).
\end{notation}

\begin{prop}\phantomsection\label{prop-decomp}
 \begin{enumerate}
\item\label{item-decompplus} Pour $e \geq 0$, on a $M_e(R) = \bigoplus_{0 \leq k \leq e} N_k(R)$.
\item Soit $\pp$ premier de degr\'{e} $d$. Les sous-modules $\mathbf{N}_{\pp,k}(R)$ pour $0 \leq k < d/2$ sont en somme directe. De plus, si $d$ est impair, on a
\[
 \smod_{\pp}(R) = \bigoplus_{0 \leq k \leq (d-1)/2} \mathbf{N}_{\pp,k}(R).
\]
En particulier, $\smod_\pp(R)  = \mathbf{N}_{\pp,0}(R)$ si $\pp$ est de degr\'{e} $1$.
\end{enumerate}
\end{prop}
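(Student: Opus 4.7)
Le plan est de prouver d'abord l'assertion (1) dans le mod\`{e}le combinatoire $M_e(R)$, purement formellement, puis d'en d\'{e}duire (2) en transf\'{e}rant l'information \`{a} travers les injections $M_e(R) \hookrightarrow \smod_\pp(R)$ \'{e}tablies \`{a} la sous-section~\ref{soussection-variante}.

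Pour (1), je pars de la partition ensembliste
\[
\PP^{1}(A)_e = \bigsqcup_{0 \leq k \leq e} C_k,
\]
o\`{u} $C_k$ est la fibre de la fonction ``degr\'{e} maximal'' $\classeA{u}{v} \mapsto \max(\deg u, \deg v)$. Elle donne la d\'{e}composition en somme directe de $R$-modules libres $R[\PP^{1}(A)_e] = \bigoplus_{0 \leq k \leq e} R[C_k]$. Le point crucial, d\'{e}j\`{a} signal\'{e} dans la notation pr\'{e}c\'{e}dente, est que chaque $C_k$ est stable par l'action \`{a} droite de $\sigma$, $\tau$ et des matrices de $\Delta$. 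Pour $\sigma$ et les $\delta \in \Delta$ c'est imm\'{e}diat, ces matrices ne modifiant pas l'ensemble des degr\'{e}s des coordonn\'{e}es. Pour $\tau$, qui envoie $\classeA{u}{v}$ sur $\classeA{v}{-u-v}$, l'in\'{e}galit\'{e} non-archim\'{e}dienne $\deg(u+v) \leq \max(\deg u,\deg v)=k$, jointe \`{a} une br\`{e}ve analyse de cas selon lequel des deux degr\'{e}s atteint $k$, montre que le degr\'{e} maximal du nouveau couple vaut encore exactement $k$. Cette stabilit\'{e} assure que le sous-module de relations \`{a} deux termes, trois termes et diagonales se d\'{e}compose en somme directe selon les $C_k$, d'o\`{u} par passage au quotient
\[
M_e(R) = \bigoplus_{0 \leq k \leq e} N_k(R).
\]

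Pour (2), je pose $e = \lfloor (d-1)/2 \rfloor$, qui est le plus grand entier strictement inf\'{e}rieur \`{a} $d/2$. Le lemme~\ref{lem-comparaisonvariantesm} fournit l'injection $M_e(R) \hookrightarrow \smod_\pp(R)$, et $\mathbf{N}_{\pp,k}(R)$ est par d\'{e}finition l'image de $N_k(R)$ par cette injection. En combinant avec (1), on obtient imm\'{e}diatement que les $\mathbf{N}_{\pp,k}(R)$ pour $0 \leq k < d/2$ sont en somme directe dans $\smod_\pp(R)$. Lorsque $d$ est impair, la proposition~\ref{prop-dimp} am\'{e}liore cette injection en l'isomorphisme $M_{(d-1)/2}(R) \simeq \smod_\pp(R)$, d'o\`{u} $\smod_\pp(R) = \bigoplus_{0 \leq k \leq (d-1)/2} \mathbf{N}_{\pp,k}(R)$, et le cas $\deg \pp = 1$ en d\'{e}coule aussit\^{o}t puisque seul $k=0$ contribue. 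Le seul point non formel est la stabilit\'{e} des $C_k$ sous $\tau$, qui exprime le caract\`{e}re non-archim\'{e}dien de la place $\infty$ d\'{e}j\`{a} central dans la remarque~\ref{rem-nonarch}~; tout le reste consiste \`{a} d\'{e}couper formellement les g\'{e}n\'{e}rateurs et les relations suivant le degr\'{e} maximal.
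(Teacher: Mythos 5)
Votre preuve est correcte et suit essentiellement la m\^{e}me d\'{e}marche que celle du texte~: partition de $\PP^{1}(A)_e$ en les $C_k$ stables sous $\sigma$, $\tau$ et $\Delta$ (d'o\`{u} la d\'{e}composition des g\'{e}n\'{e}rateurs et des relations, puis du quotient $M_e(R)$), puis transfert par l'injection du lemme~\ref{lem-comparaisonvariantesm} et l'isomorphisme de la proposition~\ref{prop-dimp} pour $d$ impair. Votre r\'{e}daction explicite simplement un peu plus la v\'{e}rification de la stabilit\'{e} de $C_k$ sous $\tau$ et le d\'{e}coupage du module des relations, points que le texte tient pour acquis depuis la d\'{e}finition des $N_k(R)$.
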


\begin{proof}
\begin{enumerate}
\item Pour $k \geq 1$, la partition $\PP^1(A)_k = \PP^1(A)_{k-1} \sqcup C_k$ entra\^{i}ne
\[
\PP^1(A)_e = \PP^1(A)_0 \sqcup \bigsqcup_{1 \leq k \leq e} C_k =  \bigsqcup_{0 \leq k \leq e} C_k
\]
(car $\PP^1(A)_0 = C_0$). On en d\'{e}duit  $M_e(R) = \bigoplus_{0 \leq k \leq e} N_k(R)$.
\item La d\'{e}composition de $M_e(R)$ et son plongement dans $\smod_\pp(R)$ (lemme~\ref{lem-comparaisonvariantesm}) assurent que la somme des $\mathbf{N}_{\pp,k}(R)$ est directe. Son \'{e}galit\'{e} avec $\smod_{\pp}(R)$ pour $d$ impair d\'{e}coule du point~\ref{item-decompplus} et de l'isomorphisme $M_{(d-1)/2}(R) \simeq \smod_\pp(R)$ (proposition~\ref{prop-dimp}).
\end{enumerate}
 \end{proof}

\subsection{La base explicite}

On exhibe maintenant une base de chaque sous-espace $N_{k}(R)$.
\begin{notation}
Posons
\begin{align*}
\D^{>} &= \{ \classeA{u}{v} \in C_k \mid \deg u = k > \deg v \} \\
\D^{<} &= \{ \classeA{u}{v} \in C_k \mid \deg u < \deg v = k \} \\
\D^{=} &= \{ \classeA{u}{v} \in C_k \mid \deg u = \deg v = k \} \\
\D_{\bullet} & = \{ \classeA{u}{v} \in \D^{=} \mid \lambda_u + \lambda_v = 0 \} \\
\D^{\bullet} & = \{ \classeA{u}{v} \in \D^{=} \mid \lambda_u + \lambda_v \neq 0 \}
\end{align*}
o\`{u} $\lambda_u$ (resp. $\lambda_v$) est le coefficient dominant de $u$ (resp. $v$). Noter que, $u$ et $v$ \'{e}tant non nuls $(k \geq 0$), ces coefficients sont bien d\'{e}finis. Enfin $\D^{>+}$ d\'{e}signera le sous-ensemble des $\classeA{u}{v} \in \D^{>}$ avec $u$ unitaire, et $v$ unitaire si non nul.
\end{notation}
On a les partitions $C_k = \D^{>} \sqcup \D^{<} \sqcup \D^{=}$ et $\D^= = \D_\bullet \sqcup \D^\bullet$. De plus, la matrice $\sigma$ est une bijection de $C_k$ qui stabilise $\D^{=}$ et permute $\D^{>}$ et $\D^{<}$. La matrice $\tau$ est une bijection de $C_k$ qui permute de fa\c{c}on cyclique les ensembles $\D_{\bullet}$, $\D^{>}$ et $\D^{<}$ dans cet ordre. Enfin les matrices de $\Delta$ sont des bijections de $\D^>$, $\D^<$ et $\D^=$.

Notons que l'ensemble $C^>$ ne contient d'\'{e}l\'{e}ment de la forme $\classeA{u}{0}$ que si $k=0$, auquel cas $\D^> = \D^{>+} = \{ \classeA{1}{0} \}$, $\D^= = \{ \classeA{1}{\lambda} \mid\lambda \in \Fq^{\times}\}$ et $\D_\bullet = \{ \classeA{1}{-1} \}$.

\begin{prop}\phantomsection\label{prop-basevariante}
Soit $k \geq 0$. L'ensemble $\D^{>+}$ fournit une base de $N_k(R)$ sur $R$. En particulier, ce module est libre de rang $q^{2k-1}$ si $k>0$, et $1$ si $k=0$.
\end{prop}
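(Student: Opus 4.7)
My plan is to establish the basis property in two stages: (i) show that $\D^{>+}$ spans $N_k(R)$, obtaining $\mathrm{rank}_R N_k(R) \leq |\D^{>+}|$; (ii) obtain the matching lower bound globally via a rank comparison with the modular symbol space $\smod_\pp(R)$ for a suitably chosen prime $\pp$, using the decomposition of proposition~\ref{prop-decomp} and the isomorphism of proposition~\ref{prop-dimp}.

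For (i) I work case by case along the partition $C_k = \D^> \sqcup \D^< \sqcup \D^=$. Given $x = \classeA{u}{v} \in \D^>$, the diagonal relations combined with the intrinsic scaling $\classeA{u}{v} = \classeA{\mu u}{\mu v}$ of $\PP^1(A)$ permit independent rescaling of $u$ and $v$ by arbitrary elements of $\Fq^\times$; I normalize $u$ (and $v$ if nonzero) to monic form, reaching $\D^{>+}$. For $x \in \D^<$, the two-term relation $(x) + (x\sigma) = 0$ gives $x = -x\sigma$ with $x\sigma = \classeA{-v}{u} \in \D^>$, so the previous case applies. For $x \in \D^=$, I first use the diagonal action to move to the unique $\Delta$-orbit representative $x' = \classeA{\lambda u}{v} \in \D_\bullet$ with $\lambda = -\lambda_v/\lambda_u$; the non-Archimedean inequality $\deg(P+Q) \leq \max(\deg P, \deg Q)$ then forces $\deg(-\lambda u - v) < k$, so in the three-term relation $(x') + (x'\tau) + (x'\tau^2) = 0$ the second term $\classeA{v}{-\lambda u - v}$ lies in $\D^>$ and the third $\classeA{-\lambda u - v}{\lambda u}$ lies in $\D^<$, reducing $x = x'$ to the first two cases. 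A count using lemma~\ref{lem-denombr} yields $|\D^{>+}| = q^{2k-1}$ for $k \geq 1$ and $|\D^{>+}| = 1$ for $k = 0$.

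For (ii) I take $\pp$ a prime of odd degree $d = 2k+1$. Proposition~\ref{prop-dimp} provides $M_k(R) \simeq \smod_\pp(R)$. Since $d$ is odd, $\smod_\pp$ is torsion-free, so by proposition~\ref{prop-symbolesmodulairesethomologie} it is free over $\Z$ of rank $g + h - 1 = g + 1$, where $h = 2$ counts the cusps $0$ and $\infty$ of $\Gamma_0(\pp) \bs \T$ and $g = (q^{2k+1}-q)/(q^2-1)$ by the genus formula~\eqref{eq-genre}. The decomposition of proposition~\ref{prop-decomp} splits $M_k(\Z) = \bigoplus_{j=0}^{k} N_j(\Z)$, and combining with (i),
\[
g+1 = \mathrm{rank}_\Z M_k(\Z) = \sum_{j=0}^k \mathrm{rank}_\Z N_j(\Z) \leq 1 + \sum_{j=1}^k q^{2j-1} = 1 + \frac{q^{2k+1}-q}{q^2-1} = g+1,
\]
forcing equality at each level $j$. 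Hence $N_k(\Z)$, being a direct summand of the free $\Z$-module $M_k(\Z)$, is itself free of rank $|\D^{>+}|$, so the spanning set $\D^{>+}$ is a basis over $\Z$. Finally, the identity $N_k(R) = N_k(\Z) \otimes_\Z R$, which follows from right-exactness of tensor applied to the defining presentation of $N_k(R)$ as a quotient of a free $R$-module, transfers the basis property to an arbitrary commutative ring $R$.

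The main technical obstacle is to establish linear independence without directly verifying relations on $\D^\bullet$-elements: on $\D^\bullet$ the matrix $\tau$ acts internally, so the three-term relation is a non-trivial constraint among $\D^\bullet$-elements that does not obviously reduce to $\D^>$. The global rank comparison bypasses this check entirely. The non-Archimedean degree inequality is the essential tool in (i), precisely guaranteeing that the three-term reduction of $\D_\bullet$-elements lands in $\D^> \sqcup \D^<$; the same inequality underlies the combinatorial structure of $\PP^1(A)_k$ that makes the rank identity match.
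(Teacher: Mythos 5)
Your proof is correct, and the spanning argument (i) is essentially the one in the paper; but your treatment of linear independence takes a genuinely different route. The paper never leaves the combinatorial presentation: it isolates an explicit sub-family $\mathcal{R}$ of relations (diagonal relations on $\D^{>}\sqcup \D_{\bullet}$, two-term relations on $\D^{>}$, three-term relations on $\D_{\bullet}$), proves through a fairly long case analysis that $\mathcal{R}$ already contains \emph{all} the relations --- the delicate point being precisely the relations supported on $\D^{\bullet}$ and $\D^{=}$ that you identify as the obstacle --- and then kills $R[\D^{>+}]\cap\mathcal{R}$ by a support argument. You bypass all of this by a rank count: choosing $\pp$ of degree $2k+1$, you compare $\sum_j \operatorname{rank} N_j(\Z)$ with $\operatorname{rank}\smod_\pp = g+1$ and observe that the generating-set upper bounds already sum to $g+1$, forcing each $N_j(\Z)$ to be free on $\D^{>+}_j$; the descent to arbitrary $R$ via $N_k(R)=N_k(\Z)\otimes_\Z R$ and right-exactness is sound, as is the existence of a prime of every odd degree. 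The trade-off is that your argument consumes as inputs Gekeler's genus formula \eqref{eq-genre}, Teitelbaum's identification of $\smod_\pp$ modulo torsion with $H_1(\Gamma\bs\T,\ptes,\Z)$ of rank $g+h-1$, and $h=2$ for prime level; these are all independently established and quoted earlier in the paper, so there is no circularity, but the remark following the théorème~\ref{theo-baseexplicite} --- that the construction \emph{re-derives} the genus formula and the torsion-freeness of $\smod_\pp$ for $d$ impair, giving them an arithmetic interpretation --- would no longer be available. The paper's self-contained verification also works uniformly over any commutative ring $R$ without a base-change step. Both proofs are valid; yours is shorter at the cost of being less intrinsic.
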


\begin{proof}
Commen\c{c}ons par \'{e}tablir que $\D^{>+}$ est g\'{e}n\'{e}ratrice. Par les relations diagonales sur $\D^>$, il suffit de voir que $\D^{>}$ engendre $N_k(R)$. Comme $\sigma$ permute $\D^{>}$ et $\D^{<}$, par les relations \`{a} deux termes sur $\D^>$, tout \'{e}l\'{e}ment de $\D^<$ est congru dans $N_k(R)$ \`{a} l'oppos\'{e} d'un \'{e}l\'{e}ment de $\D^>$. Ainsi, l'ensemble $\D^{>} \sqcup \D^{=}$ engendre $N_{k}(R)$. Maintenant, \'{e}liminons $\D^{=}$ de la liste des g\'{e}n\'{e}rateurs. Rappelons que $\tau$ permute $\D_{\bullet}$, $\D^{>}$ et $\D^{<}$. Donc, par les relations \`{a} trois termes sur $\D_\bullet$, tout \'{e}l\'{e}ment de $\D_{\bullet}$ est congru dans $N_k(R)$ \`{a} l'oppos\'{e} de la somme d'un \'{e}l\'{e}ment de $\D^>$ et d'un \'{e}l\'{e}ment de $\D^<$. Cela montre que $N_k(R)$ est engendr\'{e} par $\D^{>} \sqcup \D^{\bullet}$. Enfin, tout \'{e}l\'{e}ment $\classeA{u}{v}$ de $\D^{\bullet}$ est congru \`{a} un \'{e}l\'{e}ment de $\D_{\bullet}$ dans $N_k(R)$ par la relation diagonale $\classeA{u}{v} - \classeA{u}{v} \matrice{-\lambda_v \lambda_u^{-1}}{0}{0}{1}$. Par ce qui pr\'{e}c\`{e}de, $N_k(R)$ est engendr\'{e} par $\D^{>}$ donc par $\D^{>+}$, comme annonc\'{e}.

\medskip
Pour simplifier, posons dans $R[C_k]$~:
\[
s(x) = (x)+(x\sigma),\quad t(x) = (x)+(x\tau)+(x\tau^2),\quad d_{\delta}(x) = (x)-(x\delta)
\]
pour $x \in C_k$ et $\delta \in \Delta$. Notons $\mathcal{R}$ le sous-module de $R[C_k]$ engendr\'{e} par les \'{e}l\'{e}ments suivants~:
\begin{itemize}
\item[$\star$] $d_\delta(x)$ pour $(x,\delta) \in (\D^{>} \sqcup \D_\bullet ) \times \Delta$~;
\item[$\star$] $s(x)$ pour $x \in \D^>$ (ou, ce qui revient au m\^{e}me, dans $\D^<$)~;
\item[$\star$] $t(x)$ pour $x \in \D_\bullet$ (ou, ce qui revient au m\^{e}me, dans $\D^>$ ou $\D^<$).
\end{itemize}
Il contient toutes les relations utilis\'{e}es pour d\'{e}montrer que $\D^{>+}$ est g\'{e}n\'{e}ratrice. Comme $d_\delta(x)=-d_{\delta^{-1}}(x\delta)$, le module $\mathcal{R}$ contient aussi $d_\delta(x)$ pour $(x,\delta) \in \D^\bullet \times \Delta$ tels que $x\delta \in \D_\bullet$.

Dor\'{e}navant le symbole $\equiv$ d\'{e}signe une \'{e}galit\'{e} dans $R[C_k] / \mathcal{R}$.  On a
\[
x = d_{\delta}(x) + t(x\delta) - (x\delta\tau) - (x\delta \tau^2).
\]
Cette \'{e}quation pour $\delta = \matrice{-\lambda_v \lambda_u^{-1}}{0}{0}{1}$ entra\^{i}ne la relation suivante pour tout $x=\classeA{u}{v} \in \D^\bullet$
\begin{equation}\label{form-calcbase}
x \equiv -\classeA{\lambda_u v}{w}-\classeA{w}{-\lambda_v u}
\end{equation}
en posant $w = \lambda_v u - \lambda_u v$. On propose d'\'{e}tablir que $\mathcal{R}$ co\"{i}ncide avec le sous-module engendr\'{e} par toutes les relations dans $R[C_k]$, c'est-\`{a}-dire le noyau de l'homomorphisme $R[C_k] \to N_k(R)$.

\medskip
Commen\c{c}ons par voir que les relations diagonales sur $\D^<$ sont dans $\mathcal{R}$. Soient $\delta= \matrice{\lambda}{0}{0}{1}$ et $\delta' = \matrice{\lambda^{-1}}{0}{0}{1}$ dans $\Delta$. Dans $\G(\Fq)$ on a l'\'{e}galit\'{e} $-\lambda \sigma \delta' \sigma = \delta$ d'o\`{u} pour tout $x \in \D^<$,
\[
 (x)-(x\delta) = s(x) -  d_{\delta'}(x\sigma) - s(x\sigma \delta').
\]
Comme $\sigma$ permute $\D^<$ et $\D^>$, l'\'{e}l\'{e}ment $x\sigma$ est dans $\D^>$. De m\^{e}me, $x\sigma\delta'$ est dans $\D^>$. Donc $(x)-(x\delta) \equiv 0$ et l'affirmation est d\'{e}montr\'{e}e.

D\'{e}montrons maintenant que $\mathcal{R}$ contient $d_\delta(x)$ pour $(x,\delta) \in \D^\bullet\times  \Delta$ tels que $x\delta \in \D^\bullet$. Soient $x=\classeA{u}{v} \in \D^\bullet$ et $\delta = \matrice{\lambda}{0}{0}{1}$. Comme $x\delta \in \D^\bullet$, on d\'{e}duit de la formule~\eqref{form-calcbase}
\begin{align*}
 x & \equiv  -\classeA{\lambda_u v}{w} - \classeA{w}{-\lambda_v u}\\
x\delta & \equiv  -\classeA{\lambda \lambda_u v}{\lambda w} - \classeA{\lambda w}{-\lambda \lambda_v u} \equiv -\classeA{ \lambda_u v}{ w} - \classeA{w}{-\lambda_v u}
\end{align*}
la derni\`{e}re congruence provenant d'une \'{e}galit\'{e} dans $\PP^1(A)$. Donc $d_\delta(x) = (x)-(x\delta)$ appartient \`{a} $\mathcal{R}$. Ainsi on a \'{e}tabli que $\mathcal{R}$ contient toutes les relations diagonales.

Montrons ensuite que $\mathcal{R}$ contient $t(x)$ pour tout $x \in \D^\bullet$. Par~\eqref{form-calcbase} et bijectivit\'{e} de $\tau$ sur $\D^\bullet$, on obtient
\begin{align*}
x\tau = \classeA{v}{-u-v} &\equiv -\classeA{-\lambda_v(u+v)}{w}-\classeA{w}{(\lambda_u+\lambda_v)v} \\
x\tau^2= \classeA{-u-v}{u} &\equiv -\classeA{-(\lambda_u+\lambda_v)u}{w}-\classeA{w}{\lambda_u(u+v)}.
\end{align*}
En utilisant des relations diagonales sur $\D^>$ et $\D^<$ (elles sont dans $\mathcal{R}$), on a
\begin{align*}
x &\equiv -\classeA{v}{w} - \classeA{w}{u}\\
x\tau &\equiv -\classeA{u+v}{w}-\classeA{w}{v} \\
x\tau^2 &\equiv -\classeA{u}{w}-\classeA{w}{u+v}.
\end{align*}
Or des relations \`{a} deux termes et diagonales sur $\D^>$ donnent
\[
 \classeA{w}{u}\equiv -\classeA{u}{w}, \quad \classeA{w}{v} \equiv -\classeA{v}{w}, \quad \classeA{w}{u+v} \equiv -\classeA{u+v}{w}.
\]
Donc $t(x) = (x)+(x\tau)+(x\tau^2)$ appartient \`{a} $\mathcal{R}$ pour $x \in \D^\bullet$. Afin de conclure, il reste \`{a} voir que $\mathcal{R}$ contient $s(x)$ pour $x \in \D^{=}$. Posons $\sigma' = \sigma \matrice{-1}{0}{0}{1} = \matrice{0}{1}{1}{0}$ et $s'(x) = (x)+(x\sigma')$. On a la relation $s(x) = s'(x) + d_{\matrice{-1}{0}{0}{1}}(x\sigma)$. Soit $x =\classeA{u}{v} \in \D^=$. Comme $x\sigma \in \D^=$ et $\mathcal{R}$ contient toutes les relations diagonales sur $\D^=$, $\mathcal{R}$ contient $s(x)$ si et seulement s'il contient $s'(x)$. On propose d'\'{e}tablir $s'(x) \in \mathcal{R}$ en distinguant deux cas. D'abord, supposons $x \in \D^\bullet$. D'apr\`{e}s \eqref{form-calcbase}, on a $x \equiv -\classeA{\lambda_u v}{w} -\classeA{w}{-\lambda_v u}$. L'\'{e}l\'{e}ment $x\sigma' = \classeA{v}{u}$ est aussi dans $\D^\bullet$ et donc congru \`{a} $-\classeA{\lambda_v u}{w} -\classeA{w}{-\lambda_u v}\in R[\D^>] + R[\D^<]$. Par des relations \`{a} deux termes sur $\D^>$ (ou $\D^<$) on voit que $\
classeA{
\lambda_v u}{w} \equiv -\classeA{w}{-\lambda_v u}$ et $\classeA{w}{-\lambda_u v} \equiv -\classeA{\lambda_u v}{w}$. 
Donc $s'(x) = (x)+(x\sigma')$ appartient \`{a} $\mathcal{R}$. Supposons ensuite $x \in \D_\bullet$. On a
\[
x\equiv -(x\tau)-(x\tau^2) \equiv -\classeA{v}{-u-v}-\classeA{-u-v}{u}. 
\]
L'\'{e}l\'{e}ment $x\sigma'$ appartient aussi \`{a} $\D_\bullet$ et il est congru \`{a}
\[
-\classeA{u}{-u-v}-\classeA{-u-v}{v} \in R[\D^>]+R[\D^<]. 
\]
Par des relations \`{a} deux termes et diagonales sur $\D^>$ (ou $\D^<$), on voit que 
\[
\classeA{u}{-u-v} \equiv -\classeA{-u-v}{u} \quad\text{et}\quad \classeA{-u-v}{v} \equiv -\classeA{v}{-u-v}.
\]
Donc $s'(x) $ appartient \`{a} $\mathcal{R}$. Ainsi $\mathcal{R}$ contient $s(x)$ pour $x\in \D^=$.

\medskip
En conclusion, $\mathcal{R}$ est le noyau de $R[C_k] \to N_k(R)$. Auparavant on a \'{e}tabli la surjectivit\'{e} de l'application $R[\D^{>+}] \to N_k(R)$. Donc $\D^{>+}$ fournit une base de $N_k(R)$ d\`{e}s que $R[\D^{>+}] \cap \mathcal{R} = \{ 0 \}$, ce qu'on se propose de prouver maintenant.

Soit $z$ dans cette intersection. Comme $z$ est dans $\mathcal{R}$, il s'\'{e}crit $z=a+b+c+d$ o\`{u} $a$, resp. $b$, resp. $c$, resp. $d$, est une combinaison lin\'{e}aire de $d_\delta(x)$ ($x\in \D^>$, $\delta \in \Delta$), resp. $s(x)$ ($x \in \D^>$), resp. $t(x)$ ($x \in \D_\bullet$), resp. $d_\delta(x)$ ($x \in \D_\bullet, \delta \in \Delta$). Nous allons pr\'{e}ciser l'expression de $d$. Pour cela, consid\'{e}rons l'application
\[
\begin{array}{ccl}
f : \D_\bullet \times (\Delta - \{ \matrice{1}{0}{0}{1} \}) & \longrightarrow & \D^\bullet \\
 (x,\delta) & \longmapsto & x\delta.
\end{array}
\]
Elle est bien d\'{e}finie~: un calcul montre que pour $x \in \D_\bullet$, $x\delta$ est dans $\D_\bullet$ si et seulement si $\delta$ est l'identit\'{e}. De plus, tout $\classeA{u}{v} \in \D^\bullet$ a pour ant\'{e}c\'{e}dent $(\classeA{-\lambda_v u}{\lambda_u v},\matrice{-\lambda_v^{-1}\lambda_u}{0}{0}{1})$, donc $f$ est surjective. Enfin, supposons $f(x_1,\delta_1)=f(x_2,\delta_2)$ ; comme $x_1 = x_2 \delta_2 \delta_1^{-1}$ est dans $\D_\bullet$, on a n\'{e}cessairement $\delta_2 \delta_1^{-1} =  \matrice{1}{0}{0}{1}$ d'o\`{u} $(x_1,\delta_1)=(x_2,\delta_2)$. Ainsi $f$ est bijective. Maintenant le terme $d$ s'\'{e}crit de fa\c{c}on g\'{e}n\'{e}rale
\[
 d = \sum_{x \in \D_\bullet, \delta \neq \matrice{1}{0}{0}{1}} r_{x,\delta} ((x)-(x\delta))
\]
avec $r_{x,\delta} \in R$. D'apr\`{e}s ce qui pr\'{e}c\`{e}de, le coefficient de $y \in \D^\bullet$ dans le diviseur $d$ est donc $-r_{f^{-1}(y)}$ o\`{u} $f^{-1}(y)$ est l'ant\'{e}c\'{e}dent de $y$ par $f$. Or $\sigma$ permute $\D^>$ et $\D^<$, et $\tau$ permute de fa\c{c}on cyclique $\D_\bullet$, $\D^>$ et $\D^<$. Donc les supports de $z$, $a$, $b$ et $c$ ne contiennent aucun \'{e}l\'{e}ment de $\D^\bullet$. Il doit en \^{e}tre de m\^{e}me du support de $d$. Donc, pour tout $y \in \D^\bullet$, le coefficient $r_{f^{-1}(y)}$ est nul. Par bijectivit\'{e} de $f$, les coefficients $r_{x,\delta}$ sont nuls pour tout $x \in \D_\bullet$ et $\delta \neq \matrice{1}{0}{0}{1}$. Ainsi le terme $d$ est nul et $z=a+b+c$. Par suite, les supports de $z$, $a$ et $b$ ne contiennent aucun \'{e}l\'{e}ment de $\D_\bullet$. En \'{e}crivant $c = \sum_{x \in \D_\bullet} \mu_x t(x)$, l'intersection du support de $c$ avec $\D_\bullet$ est pr\'{e}cis\'{e}ment $\{ x \in \D_\bullet \mid \mu_x \neq 0 \}$. Donc $c$ est nul et $z=a+b$. 
De m\^{e}me, les supports de $z$ et $a$ ne contenant aucun \'{e}l\'{e}ment de $D^<$, le terme $b$ est nul. Enfin, le support de $z$ ne contient aucun \'{e}l\'{e}ment du compl\'{e}mentaire de $\D^{>+}$ dans $\D^>$, donc $a$ est nul. Cela d\'{e}montre que $\D^{>+}$ donne une base de $N_k(R)$.

Si $k>0$, par le lemme~\ref{lem-denombr}, cette base poss\`{e}de $\sum_{0 \leq j \leq k-1} N_{k,j} = q^{2k-1}$ \'{e}l\'{e}ments. Si $k=0$, elle poss\`{e}de un seul \'{e}l\'{e}ment, $\classeA{1}{0}$. Ce sont les rangs annonc\'{e}s.
\end{proof}

Le th\'{e}or\`{e}me principal~\ref{theo-basesm-intro} d\'{e}coule de l'\'{e}nonc\'{e} suivant.

\begin{theo}\phantomsection\label{theo-baseexplicite}
Soit $\pp$ un id\'{e}al premier de degr\'{e} $d$. 
\begin{enumerate}
 \item Soit $1 \leq k < d/2$. Les symboles modulaires $\xi(u:v)$, pour $u,v$ polyn\^{o}mes unitaires de $A$, premiers entre eux avec $k = \deg u > \deg v$, forment une base de $\mathbf{N}_{\pp,k}(R)$. Le symbole modulaire $\xi(1:0)$ est une base de $\mathbf{N}_{\pp,0}(R)$.
 \item La famille constitu\'{e}e de $\xi(1:0)$ (non parabolique) et de tous les $\xi(u:v)$ (paraboliques), pour $u,v$ polyn\^{o}mes unitaires premiers entre eux de $A$ avec $\deg v < \deg u < d/2$, est libre dans $\smod_{\pp}(R)$. De plus si $d$ est impair, c'est une base de $\smod_{\pp}(R)$ qu'on notera $\baseexplicite$.
\end{enumerate}
\end{theo}
\begin{proof}
C'est un corollaire du lemme~\ref{lem-comparaisonvariantesm} et des propositions~\ref{prop-decomp} et \ref{prop-basevariante}. Les seules affirmations non d\'{e}montr\'{e}es sont que $\xi(u:v)$ est parabolique et $\xi(1:0)$ ne l'est pas. L'argument est classique. Comme $\pp$ est premier, il n'y a que deux pointes correspondant \`{a} $0$ et $\infty$. Il est facile de voir que $\xi(1:0) = [ \infty,0 ]$ n'est pas parabolique. Passons \`{a} $\xi(u:v)$. Comme $u$ et $v$ sont premiers entre eux, il existe une matrice $\matrice{b}{-a}{u}{v}$ dans $\G(A)$ et alors
\[
 \xi(u:v) = [-a/v,b/u] = [-a/v,0]-[b/u,0].
\]
Prouvons que les deux derniers symboles modulaires sont paraboliques. Soit $P$ un g\'{e}n\'{e}rateur de $\pp$. Le polyn\^{o}me $P$ \'{e}tant irr\'{e}ductible, $v$ est premier \`{a} $aP$. Il existe alors $\alpha, \beta$ dans $A$ avec $\alpha v + \beta aP = 1$. La matrice $g = \matrice{\alpha}{-a}{\beta P}{v}$ est dans $\Gamma_0(\pp)$ et v\'{e}rifie $g 0 = -a/v$. Donc $ [-a/v,0]$ est parabolique. On proc\`{e}de de m\^{e}me avec $[b/u,0]$.
\end{proof}

\begin{remarque}
\begin{itemize}
\item Si $\pp$ est de degr\'{e} $1$, le module $\smod_\pp(R)$ a donc pour base $\xi(1:0)$.
\item Si $d \geq 3$, le sous-espace parabolique $\smod_\pp^0(R)$ est non nul et l'\'{e}nonc\'{e} en donne une famille libre (et m\^{e}me une base si $\deg \pp$ est impair).
\item L'\'{e}nonc\'{e} du th\'{e}or\`{e}me reste valable en rempla\c{c}ant $\xi(1:0)$ par $\xi(0:1)$, ou la condition $\deg u > \deg v$ par $\deg u < \deg v$.
\item Si $d$ est impair, on retrouve la formule~\eqref{eq-genre} de Gekeler pour le genre. En effet, cette base explicite $\baseexplicite$ de $\smod_{\pp}$ poss\`{e}de $1+(q^{d}-q)/(q^2-1)$ \'{e}l\'{e}ments (d'apr\`{e}s la proposition~\ref{prop-basevariante} et le th\'{e}or\`{e}me~\ref{theo-baseexplicite}) et par ailleurs, on sait d'apr\`{e}s Teitelbaum que le rang doit \^{e}tre $g+1$ (car $h=2$ lorsque $\pp$ est premier). Notre construction de la base $\baseexplicite$ fournit en fait une interpr\'{e}tation arithm\'{e}tique de la formule pour le genre.
\item Avec le th\'{e}or\`{e}me~\ref{theo-baseexplicite}, on retrouve aussi le fait que $\smod_{\pp}$ est sans torsion sur $\Z$ si $\pp$ est de degr\'{e} impair.
\end{itemize}
\end{remarque}

Lorsque $d$ est impair, la premi\`{e}re partie de la preuve de la proposition~\ref{prop-basevariante} donne m\^{e}me l'expression de tout symbole de Manin--Teitelbaum dans la base $\baseexplicite$.

\begin{theo}\phantomsection\label{theo-resolexplicite}
Soit $\pp$ un id\'{e}al premier de degr\'{e} impair $d$. Soit $x \in \PP^{1}(A/\pp)$. D'apr\`{e}s la proposition~\ref{prop-dimp}, on peut \'{e}crire $x = (u:v)$ avec $u,v$ premiers entre eux dans $A$ de degr\'{e}s $< d/2$.
\begin{itemize}
 \item Si $v= 0$ alors $\xi(x) = \xi(1:0)$. Si $u=0$ alors $\xi(x) = -\xi(1:0)$. Si $u$ et $v$ sont constants non nuls, alors $\xi(x) = 0$.
\item Supposons $u$ et $v$ non constants. Soient $\lambda_{u}$ et $\lambda_{v}$ leurs coefficients dominants respectifs.
\begin{itemize}
\item Si $\deg u > \deg v$ alors $\xi(x) = \xi(u/\lambda_u:v/\lambda_v)$.
\item Si $\deg u < \deg v$ alors $\xi(x) = - \xi(v/\lambda_v:u/\lambda_u)$.
\item Supposons $\deg u = \deg v$. Notons $w =\lambda_v u - \lambda_u v$ ($\neq 0$) et $\lambda_w$ son coefficient dominant. Alors
\[
 \xi(x) = \xi\left(\frac{u}{\lambda_u }: \frac{w}{\lambda_w }\right) -\xi\left(\frac{v}{\lambda_v }:\frac{w}{\lambda_w }\right).
\]
\end{itemize}
\end{itemize}
\end{theo}

Sous les hypoth\`{e}ses de l'\'{e}nonc\'{e}, tout symbole de Manin--Teitelbaum s'\'{e}crit donc comme combinaison lin\'{e}aire d'au plus deux \'{e}l\'{e}ments de la base $\baseexplicite$. C'est une fa\c{c}on particuli\`{e}rement \'{e}conome de stocker les donn\'{e}es relatives \`{a} cette base, qui pourrait avoir un int\'{e}r\^{e}t pour l'impl\'{e}mentation sur machine.

Si $\deg \pp$ est impair, on a l'isomorphisme $\alpha : \smod_\pp^0 \simfleche \faut_\pp$ du lemme~\ref{lem-isomfautsmod}. Le th\'{e}or\`{e}me~\ref{theo-baseexplicite} fournit alors une base de l'espace des cocha\^{i}nes paraboliques, qui est explicite en un certain sens.

\begin{cor}\phantomsection\label{cor-basefaut}
Soit $\pp$ premier de degr\'{e} impair $d$. La famille de cocha\^{i}nes paraboliques $\alpha(\xi(u:v))$, o\`{u} $u$ et $v$ sont unitaires premiers entre eux tels que $\deg v < \deg u < d/2$, est une base de $\faut_\pp$ sur $\Z$.
\end{cor}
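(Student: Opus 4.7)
The proof proposal is essentially an unpacking of prior results, with no substantive obstacle to overcome; the task is to assemble three ingredients correctly.

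The plan is to deduce the corollary from Theorem~\ref{theo-baseexplicite} via the isomorphism $\alpha$ of Lemma~\ref{lem-isomfautsmod}. First I would observe that since $d = \deg \pp$ is odd, the group $\smod_\pp$ is torsion-free (as recalled just after Proposition~\ref{prop-symbolesmodulairesethomologie}). Hence its subgroup $\smod_\pp^0$ is also torsion-free, so that $\overline{\smod_\pp^0} = \smod_\pp^0$ and the canonical map $\alpha : \smod_\pp^0 \simfleche \faut_\pp$ of Lemma~\ref{lem-isomfautsmod} is an isomorphism of $\Z$-modules.

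Next I would extract from Theorem~\ref{theo-baseexplicite} a $\Z$-basis of $\smod_\pp^0$. That theorem provides a basis $\baseexplicite$ of $\smod_\pp$ consisting of $\xi(1:0)$ together with the $\xi(u:v)$ for $u,v$ monic, coprime, and $\deg v < \deg u < d/2$, and it is shown within its proof that $\xi(1:0)$ is non-parabolic while all other elements of $\baseexplicite$ are parabolic. Since $\pp$ is prime, $\Gamma_0(\pp) \bs \PP^1(K)$ has exactly two orbits, so the boundary map $\smod_\pp \to B$ has image of rank $1$; the element $\xi(1:0) = [\infty,0]$ has nonzero boundary $(\Gamma 0) - (\Gamma \infty)$, which alone generates $B$ up to sign. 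Consequently removing $\xi(1:0)$ from $\baseexplicite$ yields a $\Z$-basis of the kernel $\smod_\pp^0$ (this is also recorded in the remark following Theorem~\ref{theo-baseexplicite}).

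Finally, since $\alpha$ is a $\Z$-linear isomorphism, it sends any $\Z$-basis of $\smod_\pp^0$ to a $\Z$-basis of $\faut_\pp$. Applying $\alpha$ to the above basis produces exactly the family $\{\alpha(\xi(u:v))\}$ with $u,v$ monic coprime and $\deg v < \deg u < d/2$, proving the corollary. There is no genuine difficulty; the only point requiring any care is confirming that $\smod_\pp^0$ is torsion-free in this regime, so that $\alpha$ itself (and not merely its rational extension) identifies $\smod_\pp^0$ with $\faut_\pp$ on the nose.
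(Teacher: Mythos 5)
Your proof is correct and follows exactly the route the paper intends: odd degree gives $\smod_\pp^0$ torsion-free, so $\alpha$ of Lemma~\ref{lem-isomfautsmod} identifies $\smod_\pp^0$ with $\faut_\pp$ integrally, and transporting the parabolic part of the basis $\baseexplicite$ of Theorem~\ref{theo-baseexplicite} gives the result. You even supply the small boundary-map argument (two cusps, $\partial\xi(1:0)$ generating $B\cong\Z$) showing that deleting $\xi(1:0)$ yields a basis of $\smod_\pp^0$, a point the paper only records in a remark.
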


\begin{remarque}\phantomsection\label{remarque-basegek}
Gekeler avait donn\'{e} une base explicite de $\faut_\nn$ si $\nn$ est de degr\'{e} $3$, non n\'{e}cessairement premier (\cite[section 5]{gekeler-automorpheformen}, \cite[section 6]{gekeler-onthecuspidaldivisor}). Dans le paragraphe \ref{soussousection-exemplecalcule}, on la comparera sur un exemple avec celle obtenue au corollaire~\ref{cor-basefaut}. Si ces bases co\"{i}ncident de fa\c{c}on g\'{e}n\'{e}rale, le th\'{e}or\`{e}me~\ref{theo-baseexplicite} pourrait alors \^{e}tre vu comme un prolongement du travail de Gekeler \`{a} $\pp$ premier quelconque de degr\'{e} impair.
\end{remarque}

\section{L'action de Hecke sur les symboles de Manin--Teitelbaum}\label{section-actionhecke}

Dans cette section, on travaille dans l'espace $\smod_\nn$ de symboles modulaires avec $\nn$ id\'{e}al quelconque de $A$. On exprime l'action de l'op\'{e}rateur de Hecke $T_{\mm}$ en termes de symboles de Manin--Teitelbaum.
\begin{theo}\phantomsection\label{th-actionheckesymbolesmanin}
Pour tout id\'{e}al $\mm$ et $(u:v) \in \PP^{1}(A/\nn)$, on a
\[
T_{\mm} \ \xi(u:v) = \displaystyle\sum_{\matrice{a}{b}{c}{d} \in \ensmatricesmerel_{\mm}} \xi (au+cv:bu+dv)
\]
o\`{u}
$\ensmatricesmerel_{\mm}$ est l'ensemble fini des matrices $\matrice{a}{b}{c}{d}$ \`{a} coefficients dans $A$ avec $\deg a > \deg b$, $\deg d > \deg c$, $(ad-bc) = \mm$, $a$ et $d$ unitaires. La somme est restreinte aux matrices telles que $(au+cv:bu+dv)$ est bien d\'{e}fini c'est-\`{a}-dire $(au+cv)+(bu+dv)+\nn=A$.
\end{theo}

Soit $M_{2}(A)$ l'ensemble des matrices $2 \times 2$ \`{a} coefficients dans $A$. Pour une telle matrice $M =\matrice{a}{b}{c}{d}$ et $(u:v) \in \PP^{1}(A/\nn)$, on pose $(u:v)M = (au+cv:bu+dv)$ si cela a un sens dans $\PP^{1}(A/\nn)$. La formule du th\'{e}or\`{e}me se r\'{e}crit alors
\[
 T_{\mm} \ \xi(u:v) = \displaystyle\sum_{\substack{M \in \ensmatricesmerel_{\mm} \\ (u:v)M \text{ bien d\'{e}fini}}} \xi((u:v)M).
\]
L'\'{e}nonc\'{e} et sa preuve sont \`{a} rapprocher de ceux de Merel \cite{merel-universalfourier} pour les symboles modulaires sur $\Q$
(voir aussi la d\'{e}monstration de \cite[lem.~2]{merel-torsion}) qui font suite aux travaux de Manin \cite{manin-symbolesmodulaires} et Mazur \cite{mazur-courbesellsymbmod} sur les matrices de Heilbronn. 

L'ensemble $\ensmatricesmerel_{\mm}$ \'{e}tant ind\'{e}pendant de $\nn$, le th\'{e}or\`{e}me~\ref{th-actionheckesymbolesmanin} entra\^{i}ne une loi de r\'{e}ciprocit\'{e} pour les courbes elliptiques sur $K$ (proposition~\ref{prop-loireciprocite}). Dans la section \ref{soussection-exSm}, on donne la liste des matrices de $\ensmatricesmerel_{\mm}$ lorsque $\mm$ est de petit degr\'{e}.

Le th\'{e}or\`{e}me~\ref{th-actionheckesymbolesmanin} fournit aussi un algorithme pour calculer la matrice de $T_\mm$ dans une base de $\smod_\nn$ (dans \cite[3.4.2 et 8.3.2]{stein-modformcomp} sont discut\'{e}s des algorithmes similaires pour les symboles modulaires classiques issus de \cite{merel-universalfourier}). Cette m\'{e}thode se pr\^{e}te bien aux calculs \`{a} $\mm$ fix\'{e} pour diff\'{e}rents sous-groupes de congruence $\Gamma_0(\nn)$ du fait que $\ensmatricesmerel_\mm$ ne d\'{e}pend pas de $\nn$.

\subsection{D\'{e}monstration du th\'{e}or\`{e}me~\ref{th-actionheckesymbolesmanin}}

\begin{lem}\phantomsection\label{lem-fini}
 L'ensemble de matrices $\ensmatricesmerel_{\mm}$ est fini.
\end{lem}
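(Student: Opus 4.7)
La strat\'{e}gie est de tirer parti de la nature non-archim\'{e}dienne du degr\'{e} sur $A$, d\'{e}j\`{a} \'{e}voqu\'{e}e dans la remarque~\ref{rem-nonarch}, pour borner d'abord $\deg a$ et $\deg d$ en fonction de $\deg \mm$, puis pour en d\'{e}duire qu'il n'y a qu'un nombre fini d'entr\'{e}es $a,b,c,d$ possibles.

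Soit $M=\matrice{a}{b}{c}{d}\in\ensmatricesmerel_\mm$. Les polyn\^{o}mes $a$ et $d$ \'{e}tant unitaires sont non nuls, donc $\deg a, \deg d \geq 0$. Avec la convention $\deg 0 = -\infty$, les in\'{e}galit\'{e}s strictes $\deg a > \deg b$ et $\deg d > \deg c$ fournissent
\[
\deg(bc) = \deg b + \deg c < \deg a + \deg d = \deg(ad).
\]
Par l'in\'{e}galit\'{e} ultram\'{e}trique appliqu\'{e}e \`{a} deux \'{e}l\'{e}ments de degr\'{e}s distincts, on a donc $\deg(ad-bc) = \deg(ad)=\deg a + \deg d$. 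Comme $(ad-bc)=\mm$, on aboutit \`{a}
\[
\deg a + \deg d = \deg \mm.
\]

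Il n'y a plus alors qu'un nombre fini de couples admissibles $(\deg a, \deg d) \in \{0, \dots, \deg \mm\}^2$. Pour chacun d'eux, $a$ parcourt les $q^{\deg a}$ polyn\^{o}mes unitaires de degr\'{e} $\deg a$, $d$ les $q^{\deg d}$ polyn\^{o}mes unitaires de degr\'{e} $\deg d$, $b$ les polyn\^{o}mes de degr\'{e} strictement inf\'{e}rieur \`{a} $\deg a$ et $c$ ceux de degr\'{e} strictement inf\'{e}rieur \`{a} $\deg d$, ce qui ne laisse qu'un nombre fini de choix. La condition suppl\'{e}mentaire $(ad-bc)=\mm$ ne fait que restreindre cet ensemble. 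On conclut que $\ensmatricesmerel_\mm$ est fini.

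L'unique point m\'{e}ritant attention est le premier calcul~: reconna\^{i}tre que le contr\^{o}le du degr\'{e} du d\'{e}terminant provient des conditions sur les degr\'{e}s des coefficients via l'ultram\'{e}tricit\'{e} du degr\'{e}. Le reste est un comptage \'{e}l\'{e}mentaire et ne pr\'{e}sente aucune difficult\'{e} s\'{e}rieuse.
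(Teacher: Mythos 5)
Votre preuve est correcte et suit essentiellement le m\^{e}me argument que celle de l'article~: les in\'{e}galit\'{e}s $\deg a > \deg b$ et $\deg d > \deg c$ forcent $\deg(ad-bc) = \deg a + \deg d = \deg\mm$, ce qui borne les degr\'{e}s des quatre coefficients et ne laisse qu'un nombre fini de possibilit\'{e}s sur $\Fq$. Votre r\'{e}daction explicite simplement un peu plus le recours \`{a} l'ultram\'{e}tricit\'{e} et le comptage final.
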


\begin{proof} Soit $\matrice{a}{b}{c}{d}$ appartenant \`{a} $\ensmatricesmerel_{\mm}$. Comme $\deg a > \deg b$ et $\deg d > \deg c$, le degr\'{e} de $ad-bc$ est \'{e}gal \`{a} $\deg (ad) = \deg a + \deg d$. Or, $(ad-bc)=\mm$ donc $\deg a$ et $\deg d$ valent au plus $\deg \mm$. Comme $a$, $b$, $c$ et $d$ sont \`{a} coefficients dans le corps fini $\Fq$, cela ne laisse qu'un nombre fini de possibilit\'{e}s pour ces polyn\^{o}mes.
\end{proof}

Le groupe $\G(A)$ op\`{e}re \`{a} droite sur l'ensemble $M_{2}(A)_{\mm}$ des matrices de $M_{2}(A)$ dont le d\'{e}terminant engendre $\mm$. Notons $M_{2}(A)_{\mm}/\G(A)$ l'ensemble des classes. On commence par en exhiber un syst\`{e}me de repr\'{e}sentants. Soit $P$ le g\'{e}n\'{e}rateur unitaire de $\mm$. Pour la suite, on notera que toute matrice de $\ensmatricesmerel_{\mm}$ a pour d\'{e}terminant $P$.

\begin{prop}\phantomsection\label{prop-systemerepr}
Les matrices
$m(a,b)=\matrice{a}{b}{0}{P/a}$ avec $a$ divisant $P$, $a$ unitaire et $\deg a > \deg b$, forment un syst\`{e}me de repr\'{e}sentants de $M_{2}(A)_{\mm} / \G(A)$. De plus, ce sont les seuls \'{e}l\'{e}ments $\matrice{a}{b}{c}{d}$ de $\ensmatricesmerel_{\mm}$ avec $c=0$.
\end{prop}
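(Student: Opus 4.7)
La proposition comporte deux affirmations : (i) que les $m(a,b)$ forment un syst\`{e}me de repr\'{e}sentants de $M_2(A)_\mm / \G(A)$, et (ii) qu'elles sont les seules matrices de $\ensmatricesmerel_\mm$ v\'{e}rifiant $c=0$.

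L'assertion (ii) est facile et je la traiterais en premier : si $\matrice{a}{b}{c}{d} \in \ensmatricesmerel_\mm$ avec $c=0$, alors $ad$ est un g\'{e}n\'{e}rateur de $\mm$, donc $ad=\lambda P$ avec $\lambda\in\Fq^\times$~; comme $a$ et $d$ sont unitaires, $\lambda=1$, d'o\`{u} $d=P/a$ (et en particulier $a\mid P$). La condition $\deg d > \deg c=-\infty$ est automatique et $\deg a > \deg b$ persiste. R\'{e}ciproquement, chaque $m(a,b)$ appartient bien \`{a} $\ensmatricesmerel_\mm$ par construction.

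Pour (i), la preuve utilise la th\'{e}orie de Hermite dans l'anneau euclidien $A$. Soit $M = \matrice{x}{y}{z}{w} \in M_2(A)_\mm$. Je proc\'{e}derais en trois \'{e}tapes de multiplication \`{a} droite par des \'{e}l\'{e}ments de $\G(A)$ pour atteindre la forme $m(a,b)$. D'abord, si $(z,w)\neq (0,0)$, soit $h=\gcd(z,w)$ unitaire et \'{e}crivons $z=hz'$, $w=hw'$ avec $\gcd(z',w')=1$~; par Bezout il existe $\beta,\delta\in A$ tels que $z'\beta + w'\delta=1$, et la matrice $g_1=\matrice{w'}{\beta}{-z'}{\delta}\in\SL_2(A)\subset \G(A)$ annule la deuxi\`{e}me entr\'{e}e de la seconde ligne de $Mg_1$. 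On obtient $Mg_1=\matrice{a'}{b'}{0}{d'}$ avec $a'd'=\det M$, un g\'{e}n\'{e}rateur de $\mm$, donc $a',d' \neq 0$. Ensuite, par multiplication par une matrice diagonale $\matrice{\lambda_{a'}^{-1}}{0}{0}{\lambda_{d'}^{-1}}\in\G(A)$ (o\`{u} $\lambda_{a'},\lambda_{d'}$ sont les coefficients dominants), on rend les entr\'{e}es diagonales unitaires ; leur produit devient alors exactement $P$. Enfin, par division euclidienne, il existe $c\in A$ tel que $\deg(b'' + a''c) < \deg a''$, et la multiplication par $\matrice{1}{c}{0}{1}\in\G(A)$ r\'{e}alise cette r\'{e}duction sans perturber les autres entr\'{e}es. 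On obtient ainsi une matrice de la forme $m(a,b)$ \'{e}quivalente \`{a} $M$.

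Pour l'unicit\'{e}, supposons $m(a_1,b_1) = m(a_2,b_2)\cdot g$ avec $g=\matrice{\alpha}{\beta}{\gamma}{\delta}\in\G(A)$. L'\'{e}galit\'{e} dans la position $(2,1)$ donne $(P/a_2)\gamma = 0$, d'o\`{u} $\gamma=0$ puisque $P/a_2 \neq 0$. Alors $\det g = \alpha\delta \in\Fq^\times$, donc $\alpha,\delta\in\Fq^\times$. Comparant la position $(1,1)$ : $a_2 \alpha = a_1$~; les deux polyn\^{o}mes \'{e}tant unitaires de m\^{e}me coefficient dominant, on tire $\alpha=1$ puis $a_1=a_2$. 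De m\^{e}me en position $(2,2)$ on obtient $\delta=1$. Enfin la position $(1,2)$ donne $a_2\beta = b_1-b_2$~; les degr\'{e}s de $b_1$ et $b_2$ \'{e}tant strictement inf\'{e}rieurs \`{a} $\deg a_2$, cela force $\beta=0$ et $b_1=b_2$.

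L'obstacle principal est purement combinatoire : v\'{e}rifier soigneusement \`{a} chaque \'{e}tape de la r\'{e}duction que la matrice utilis\'{e}e appartient bien \`{a} $\G(A)$ (et non juste \`{a} $M_2(A)$), et que les normalisations (unitaires, in\'{e}galit\'{e}s strictes sur les degr\'{e}s) sont simultan\'{e}ment compatibles. L'argument est standard (forme normale de Hermite) mais demande de la rigueur sur les coefficients dominants.
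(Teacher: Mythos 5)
Votre preuve est correcte et suit essentiellement la m\^{e}me d\'{e}marche que celle de l'article~: la r\'{e}duction \`{a} la forme triangulaire par Bezout que vous explicitez est exactement ce que l'article obtient en invoquant la transitivit\'{e} de $\G(A)$ sur $\PP^{1}(K)$, et votre v\'{e}rification de l'unicit\'{e} d\'{e}taille le <<~calcul \'{e}l\'{e}mentaire~>> auquel l'article se contente de renvoyer. Signalons seulement une coquille sans cons\'{e}quence~: la matrice $g_1$ annule la \emph{premi\`{e}re} entr\'{e}e de la seconde ligne (et y place $h$ en seconde position), comme le confirme d'ailleurs votre formule $Mg_1=\matrice{a'}{b'}{0}{d'}$.
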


\begin{proof}
Soit $M \in M_{2}(A)_{\mm}$. On commence par trouver une matrice triangulaire sup\'{e}rieure dans la classe de $M$. L'action \`{a} gauche du groupe $\G(A)$ sur $\PP^1(K)$ est transitive. Il existe donc $\gamma \in \G(A)$ avec $\gamma \infty = M^{-1} \infty$. Dans $M_{2}(A)_{\mm}$, on a alors $M'=M\gamma = \matrice{a}{b}{0}{\lambda P/a}$ avec $a \mid P$, $b \in A$ et $\lambda \in \Fq^{\times}$. Quitte \`{a} remplacer $M'$ par $M' \matrice{\alpha^{-1}}{0}{0}{\alpha \lambda^{-1}}$ o\`{u} $\alpha \in \Fq^{\times}$ est le coefficient dominant de $a$, on peut supposer $M'=\matrice{a}{b}{0}{P/a}$ avec $a$ unitaire. Enfin, un calcul \'{e}l\'{e}mentaire montre que deux telles matrices $\matrice{a}{b}{0}{P/a}$ et $\matrice{a'}{b'}{0}{P/a'}$ sont dans la m\^{e}me classe pour $\G(A)$ si et seulement si $a=a'$ et $b' \equiv b \bmod a$. On peut donc choisir $b$ de sorte que $\deg a > \deg b$. La derni\`{e}re assertion de l'\'{e}nonc\'{e} provient de la d\'{e}finition de $\ensmatricesmerel_{\mm}$.
\end{proof}

\begin{prop}\phantomsection\label{prop-exunique}
Soit $M=\matrice{a}{b}{c}{d} \in \ensmatricesmerel_{\mm}$. Si $c \neq 0$ (c'est-\`{a}-dire $M\infty \neq \infty$), il existe une unique matrice $M' \in \ensmatricesmerel_{\mm} \cap  M \G(A)$ telle que $M'0=M\infty$. Si $b \neq 0$ (c'est-\`{a}-dire $M0 \neq 0$), il existe une unique matrice $M' \in \ensmatricesmerel_{\mm} \cap  M \G(A)$ telle que $M'\infty=M 0$.
\end{prop}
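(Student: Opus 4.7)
Notre plan est de prouver la premi\`{e}re assertion par construction explicite via la division euclidienne, puis de d\'{e}duire la seconde par sym\'{e}trie (\'{e}change des r\^{o}les de $0$ et $\infty$, donc des colonnes de $M$). Nous commencerions par reformuler le probl\`{e}me~: toute matrice $M' \in M\G(A)$ s'\'{e}crit $M' = M\gamma$ avec $\gamma \in \G(A)$, et comme $M$ est inversible dans $\G(K)$, la condition $M'0 = M\infty$ \'{e}quivaut \`{a} $\gamma\cdot 0 = \infty$. Ceci impose \`{a} $\gamma$ la forme $\matrice{\alpha}{\lambda}{\beta}{0}$ avec $\alpha \in A$ et $\lambda,\beta \in \Fq^{\times}$ (de sorte que $\det\gamma = -\lambda\beta \in \Fq^{\times}$).

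Pour l'existence, nous calculerions
\[
M\gamma = \matrice{a\alpha+b\beta}{a\lambda}{c\alpha+d\beta}{c\lambda}
\]
puis choisirions $\alpha,\lambda,\beta$ afin que toutes les conditions d\'{e}finissant $\ensmatricesmerel_\mm$ soient satisfaites. L'id\'{e}e-cl\'{e} est de rendre le coefficient inf\'{e}rieur gauche $c\alpha + d\beta$ de degr\'{e} strictement inf\'{e}rieur \`{a} $\deg c$~: en effectuant la division euclidienne $d = qc + r$ avec $\deg q \geq 1$ et $\deg r < \deg c$ (possible car $c \neq 0$ et $\deg d > \deg c$), nous prendrions $\alpha = -\beta q$, d'o\`{u} $c\alpha + d\beta = \beta r$. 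Il ne resterait plus qu'\`{a} ajuster les constantes~: $\lambda = \lambda_c^{-1}$ pour que $c\lambda$ soit unitaire, et $\beta = -\lambda_q^{-1}$ pour que $a\alpha + b\beta = \beta(b-aq)$, de degr\'{e} $\deg a + \deg q$ (puisque $\deg a > \deg b$), soit unitaire. Les in\'{e}galit\'{e}s $\deg a' > \deg b'$, $\deg d' > \deg c'$ et l'identit\'{e} $\det M' = P$ r\'{e}sulteraient alors de calculs directs de degr\'{e}s.

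Pour l'unicit\'{e}, soient $M'$ et $M''$ deux telles matrices. Nous consid\'{e}rerions $\eta := (M'')^{-1}M' \in \G(A)$~: cet \'{e}l\'{e}ment fixe $0$, donc est triangulaire inf\'{e}rieur de la forme $\matrice{\alpha'}{0}{\beta'}{\mu'}$ avec $\alpha',\mu' \in \Fq^{\times}$ et $\beta' \in A$. L'\'{e}galit\'{e} $M' = M''\eta$ donnerait alors successivement~: $d' = d''\mu'$ avec $d'$, $d''$ unitaires, d'o\`{u} $\mu' = 1$~; puis $c' = c''\alpha' + d''\beta'$ avec $\deg c' < \deg d''$ et $\deg c'' < \deg d''$, ce qui force $\beta' = 0$~; enfin $a' = a''\alpha'$ avec $a'$, $a''$ unitaires, d'o\`{u} $\alpha' = 1$. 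Ainsi $\eta = I$ et $M' = M''$.

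Nous ne pr\'{e}voyons pas d'obstacle majeur~: la preuve se ram\`{e}ne essentiellement \`{a} un calcul guid\'{e} par la division euclidienne. Le point le plus d\'{e}licat sera la v\'{e}rification de la compatibilit\'{e} des normalisations simultan\'{e}es des coefficients dominants de $\lambda$, $\beta$ et des entr\'{e}es de $M'$ avec toutes les conditions d\'{e}finissant $\ensmatricesmerel_\mm$. La seconde assertion se traiterait de fa\c{c}on parall\`{e}le en imposant $\gamma\cdot\infty = 0$ (donc $\gamma$ de la forme $\matrice{0}{\lambda}{\beta}{\mu}$ avec $\lambda,\beta \in \Fq^\times$) et en effectuant la division euclidienne de $a$ par $b$ au lieu de $d$ par $c$.
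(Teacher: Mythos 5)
Votre d\'{e}monstration est correcte et suit essentiellement la m\^{e}me d\'{e}marche que celle de l'article~: la matrice $\gamma = \matrice{-\beta q}{\lambda}{\beta}{0}$ que vous construisez co\"{i}ncide, apr\`{e}s normalisation (on a $\lambda_q \lambda_c = 1$ puisque $d$ est unitaire), avec la matrice $\matrice{Q}{\alpha^{-1}}{-\alpha}{0}$ de l'article, o\`{u} $\alpha$ est le coefficient dominant de $c$ et $Q$ le quotient unitaire de la division euclidienne de $d$ par $\alpha^{-1}c$. Pour l'unicit\'{e}, vous comparez directement les deux candidates via $\eta = (M'')^{-1}M'$ au lieu de montrer que $M^{-1}M''$ est d\'{e}termin\'{e}e par $M$, mais les deux arguments reposent sur les m\^{e}mes contraintes de degr\'{e} et d'unitarit\'{e}.
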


\begin{proof}
On d\'{e}montre le r\'{e}sultat pour $c \neq 0$, le cas $b \neq 0$ \'{e}tant similaire. Commen\c{c}ons par l'existence. Soit $\alpha \in \Fq^{\times}$ le coefficient dominant de $c$. Le quotient de la division euclidienne de $d$ par $\alpha^{-1} c$ est un polyn\^{o}me unitaire $Q$ v\'{e}rifiant $\deg c > \deg(cQ - \alpha d)$. Posons 
\[
M'=M\matrice{Q}{\alpha^{-1}}{-\alpha}{0} = \matrice{aQ-\alpha b}{\alpha^{-1} a}{cQ -  \alpha d}{\alpha^{-1} c} \in M_{2}(A)_{\mm}.
\]
Comme $\deg Q > 0$, on a $\deg (aQ-\alpha b)=\deg(aQ) > \deg (\alpha^{-1} a)$. De plus, $\alpha^{-1} c$ et $aQ-\alpha b$ sont unitaires. Donc la matrice $M'$ appartient \`{a} l'ensemble $\ensmatricesmerel_{\mm}$ et v\'{e}rifie $M'0=a/c=M\infty$.

Passons \`{a} l'unicit\'{e}. Soit une matrice $M'' \in \ensmatricesmerel_{\mm} \cap M \G(A)$ avec $M''0=M\infty$. Il suffit de montrer que $M^{-1}M''$ est d\'{e}termin\'{e}e de fa\c{c}on unique par $M$. Comme $M^{-1} M''$ est de d\'{e}terminant $1$ et envoie $0$ sur $\infty$, on a $M^{-1} M''=\matrice{Q}{\alpha^{-1}}{-\alpha}{0}$ avec $\alpha \in \Fq^{\times}$ et $Q \in A$, d'o\`{u} $M''= \matrice{aQ-\alpha b}{\alpha^{-1} a}{cQ-\alpha d}{\alpha^{-1} c}$. De plus, $M''$ \'{e}tant dans $\ensmatricesmerel_{\mm}$, le polyn\^{o}me $\alpha^{-1} c$ est unitaire donc $\alpha$ est le coefficient dominant de $c$. Il reste \`{a} montrer que $Q$ est d\'{e}termin\'{e} par la matrice $M$. Comme $M'' \in \ensmatricesmerel_{\mm}$, le polyn\^{o}me $Q$ v\'{e}rifie $\deg c > \deg(cQ-\alpha d)$. Donc $Q$ est le quotient de la division euclidienne de $\alpha d$ par $c$ et il est d\'{e}termin\'{e} de fa\c{c}on unique par les polyn\^{o}mes $c$ et $d$, donc par $M$.
\end{proof}

\begin{proof}[D\'{e}monstration du th\'{e}or\`{e}me~\ref{th-actionheckesymbolesmanin}]
Prenons un repr\'{e}sentant $(u,v)$ de $(u:v)$ dans $A \times A$ avec $u$ et $v$ premiers entre eux. Il existe donc une matrice $g=\matrice{x}{y}{u}{v} \in \G(A)$ telle que $\xi(u:v)=\left[g0,g\infty\right]$. Soit $M = \matrice{a}{b}{c}{d} \in \ensmatricesmerel_{\mm}$. On commence par relever $(au+cv:bu+dv)$ en une matrice de $\G(A)$. Comme $g M  \in M_{2}(A)_{\mm}$, il existe $\delta$ et $\beta$, avec $\delta \mid P$, $\delta$ unitaire et $\deg \beta < \deg \delta$, tels que $gM \in m(\delta,\beta) \G(A)$ d'apr\`{e}s la proposition~\ref{prop-systemerepr}. Comme $m(\delta,\beta)^{-1} g M \in \G(A)$, on a
\begin{align*}
\left[ m(\delta, \beta)^{-1} gM0, m(\delta, \beta)^{-1} gM \infty \right]  & = \xi\left(\delta(au+cv)/P : \delta(bu+dv)/P \right)\\
& = \xi(au+cv : bu+dv)
\end{align*}
si $P/\delta$ est inversible dans $A/ \nn$. Pour simplifier, posons $C(\delta,\beta) = m(\delta,\beta) \G(A)$. Par la proposition~\ref{prop-systemerepr}, on en d\'{e}duit l'\'{e}galit\'{e} des sommes, finies d'apr\`{e}s le lemme~\ref{lem-fini}~:
\[
 \sum_{\substack{M \in \ensmatricesmerel_{\mm} \\ (u:v)M \text{ bien d\'{e}fini} }} \xi((u:v)M)
   = \sum_{\substack{ \deg \beta < \deg \delta,\; \delta \mid P,\; \delta \text{ unitaire} \\ (P/\delta)+\nn=A\\M \in g^{-1} C(\delta, \beta) \cap \ensmatricesmerel_{\mm} }} \left[ m(\delta,\beta)^{-1} gM0, m(\delta, \beta)^{-1} gM \infty \right].
\]
Ce symbole modulaire ne d\'{e}pend que du diviseur suivant, \`{a} support dans $\PP^1(K)$,
\[ 
D = \sum_{\beta,\delta,M} ( m(\delta,\beta)^{-1} gM\infty) - (m(\delta, \beta)^{-1} gM 0)
\]
o\`{u} la somme est sur $\beta$, $\delta$ et $M$ comme pr\'{e}c\'{e}demment. Si $M$ v\'{e}rifie $M0 \neq 0$, d'apr\`{e}s la proposition~\ref{prop-exunique}, il existe une unique matrice $M' \in \ensmatricesmerel_{\mm} \cap M \G(A)$ telle que $M' \infty = M0$. De plus, $M' \infty$ est distinct de $\infty$ (sinon on aurait $M0 = \infty = b/d$, donc $d=0$ ce que l'in\'{e}galit\'{e} $\deg d > \deg b$ exclut). On a donc l'\'{e}galit\'{e} des diviseurs
\[ 
\sum_{\substack{M \in g^{-1}C(\delta, \beta) \cap \ensmatricesmerel_{\mm} \\ M\infty \neq \infty}}
(m(\delta, \beta)^{-1} gM\infty)
=
\sum_{\substack{M \in g^{-1}C(\delta, \beta) \cap \ensmatricesmerel_{\mm} \\M0 \neq 0 }}
(m(\delta, \beta)^{-1} gM0)
\]
En utilisant l'unicit\'{e} dans la proposition~\ref{prop-exunique}, le diviseur $D$ vaut
\begin{align*}
&\sum_{\beta,\delta} \left(  \sum_{\substack{M \in g^{-1}C(\delta, \beta) \cap \ensmatricesmerel_{\mm},\\M\infty = \infty}}
(m(\delta, \beta)^{-1} gM\infty) - \sum_{\substack{M \in g^{-1}C(\delta, \beta) \cap \ensmatricesmerel_{\mm},\\M0=0}}
(m(\delta, \beta)^{-1} gM 0) \right) \\
=&  \sum_{\beta,\delta} \left\lgroup (m(\delta, \beta)^{-1} g\infty) - (m(\delta, \beta)^{-1} g 0) \right\rgroup
\end{align*}
($\beta, \delta$ dans $A$ avec $\deg \beta < \deg \delta$, $\delta$ unitaire divisant $P$ et $(P/\delta)+\nn = A$). La derni\`{e}re \'{e}galit\'{e} provient du fait que chaque classe pour $\G(A)$ poss\`{e}de d'uniques repr\'{e}sentants fixant $0$ et $\infty$ respectivement (voir proposition~\ref{prop-systemerepr}). Donc on obtient l'\'{e}galit\'{e} de symboles modulaires
\[
 \sum_{\substack{M \in \ensmatricesmerel_{\mm} \\ (u:v)M \text{ bien d\'{e}fini}}}  \xi ((u:v)M) =  \sum_{\substack{ \deg \beta < \deg \delta \\ \delta \mid P,\; \delta \text{ unitaire} \\ (P/\delta)+\nn=A }} \left[m(\delta, \beta)^{-1}g0,m(\delta, \beta)^{-1}g \infty \right]. \]
Enfin, comme $g0=y/v$ et $g \infty = x/u$, on reconna\^{i}t $T_{\mm} \left[y/v,x/u\right] = T_{\mm}\ \xi(u:v)$ au membre de droite.
\end{proof}

\subsection{Exemples d'ensembles \texorpdfstring{$\ensmatricesmerel_{\mm}$}{Sm}}\label{soussection-exSm}

La lettre $P$ continue \`{a} d\'{e}signer le g\'{e}n\'{e}rateur unitaire de l'id\'{e}al $\mm$. Si $\mm$ est de degr\'{e} $1$, l'ensemble $\ensmatricesmerel_{\mm}$ est form\'{e} des $2q$ matrices $\matrice{P}{\lambda}{0}{1}$, $\matrice{1}{0}{\lambda}{P}$, pour $\lambda \in \Fq$, et d'apr\`{e}s le th\'{e}or\`{e}me~\ref{th-actionheckesymbolesmanin}, l'action de $T_{\mm}$ est alors donn\'{e}e par
\[
 T_{\mm}\xi(u:v) = \sum_{\lambda \in \Fq}  \left\lgroup\xi(Pu:\lambda u+v) + \xi(u+\lambda v : Pv) \right\rgroup.
\]
Si $\mm$ est de degr\'{e} $2$, on obtient facilement la liste suivante des matrices de $\ensmatricesmerel_{\mm}$.

\begin{lem}\phantomsection
Soit $\mm$ l'id\'{e}al engendr\'{e} par $P = T^{2} + mT +n$ $(m,n \in \Fq)$. Posons
\begin{align*}
 M_{1}(b) & = \matrice{P}{b}{0}{1} \ , \ M_{2}(b) = \matrice{1}{0}{b}{P} \qquad (b \in A, \deg b \leq 1)\\
M_3(\alpha,b,c) &= \matrice{T+\alpha}{b}{c}{T+m-\alpha} \qquad (\alpha,b,c \in \Fq).
\end{align*}
Soit $\mathcal{R}$ l'ensemble des racines de $P$ dans $\Fq$. Alors $\ensmatricesmerel_{\mm}$ est form\'{e} des matrices
\begin{align*}
 M_{1}(b), M_{2}(b) & \qquad (b \in A, \deg b \leq 1) \\
M_3(-x,b,c) & \qquad (x \in \mathcal{R}, \; b , c \in \Fq \text{ avec } b=0 \text{ ou } c=0) \\
M_3(\alpha,b,-P(-\alpha)/b) & \qquad (\alpha \in \Fq, \alpha \notin \mathcal{R}, b \in \Fq^{\times}).
\end{align*}
Si $\mathcal{R}$ poss\`{e}de z\'{e}ro (resp. un, resp. deux) \'{e}l\'{e}ment(s), alors $\ensmatricesmerel_{\mm}$ est de cardinal $3q^2-q$ (resp. $3q^2$, resp. $3q^2+q$).
\end{lem}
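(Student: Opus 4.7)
The plan is a direct case analysis on the pair $(\deg a, \deg d)$ for a matrix $M = \matrice{a}{b}{c}{d} \in \ensmatricesmerel_\mm$. The starting observation, already used in Lemma~\ref{lem-fini}, is that the monicity of $a$ and $d$ together with the strict inequalities $\deg a > \deg b$ and $\deg d > \deg c$ forces $ad - bc$ to be monic of degree $\deg a + \deg d$; combined with $(ad - bc) = \mm$ and the monicity of $P$, this upgrades to the equality $ad - bc = P$ and hence $\deg a + \deg d = 2$. Only the three cases $(2,0)$, $(0,2)$, $(1,1)$ therefore need to be treated.

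The extremal cases are immediate. For $(\deg a, \deg d) = (2, 0)$ one has $d = 1$, $c = 0$ and $a = P$, while $b$ can be any polynomial of degree $\leq 1$; these are the matrices $M_1(b)$. The case $(0,2)$ is symmetric and yields the $M_2(b)$. Each family has $q^2$ elements.

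For the central case $(1, 1)$, I would write $a = T + \alpha$ and $d = T + \alpha'$ with $\alpha, \alpha', b, c \in \Fq$. Comparing coefficients in $ad - bc = T^2 + mT + n$ gives $\alpha + \alpha' = m$ and $\alpha\alpha' - bc = n$. Substituting $\alpha' = m - \alpha$ leaves the single defining equation
\[
 bc = \alpha(m - \alpha) - n = -P(-\alpha),
\]
which is the key identity. The enumeration is then finished by a dichotomy on the vanishing of $P(-\alpha)$: if $-\alpha$ is a root of $P$, i.e.\ $\alpha = -x$ with $x \in \mathcal{R}$, then $bc = 0$ and the admissible pairs are exactly those with $b = 0$ or $c = 0$, producing the sub-family $M_3(-x, b, c)$; otherwise $bc$ is a fixed element of $\Fq^\times$, $c$ is determined by the choice of $b \in \Fq^\times$, and one obtains the sub-family $M_3(\alpha, b, -P(-\alpha)/b)$.

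The cardinality claim follows by a straightforward count: each of the $|\mathcal{R}|$ special values of $\alpha$ contributes $2q-1$ admissible pairs $(b,c)$ (inclusion--exclusion on $b = 0$ or $c = 0$), each of the remaining $q - |\mathcal{R}|$ values contributes $q - 1$ pairs, so case $(1,1)$ produces $|\mathcal{R}|(2q-1) + (q - |\mathcal{R}|)(q-1) = q^2 - q + q\,|\mathcal{R}|$ matrices. Adding the $2q^2$ matrices from the two extremal cases gives $3q^2 - q + q\,|\mathcal{R}|$, which specializes to the three values announced for $|\mathcal{R}| \in \{0,1,2\}$. No step presents a genuine obstacle; the only subtlety is exploiting monicity of $a$ and $d$ to pin down $ad - bc = P$ exactly rather than merely up to an $\Fq^\times$-scalar.
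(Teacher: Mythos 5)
Votre démonstration est correcte ; le texte ne donne d'ailleurs aucune preuve de ce lemme (il est annoncé comme s'obtenant \emph{facilement}), et votre calcul direct --- monicité de $a$ et $d$ forçant $ad-bc=P$ et $\deg a+\deg d=2$, puis discussion des trois cas $(2,0)$, $(0,2)$, $(1,1)$ avec l'identité $bc=-P(-\alpha)$ --- est exactement la vérification attendue. Le décompte final $3q^2-q+q\,\#\mathcal{R}$ est également exact.
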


\subsection{Une loi de r\'{e}ciprocit\'{e} de Manin}

Cet \'{e}nonc\'{e} est une cons\'{e}quence directe du th\'{e}or\`{e}me~\ref{th-actionheckesymbolesmanin} et du th\'{e}or\`{e}me de modularit\'{e} pour les courbes elliptiques sur $K$.

\begin{prop}\phantomsection\label{prop-loireciprocite}
 Soit $E$ une courbe elliptique sur $K$, de conducteur $\nn \cdot (\infty)$ avec r\'{e}duction multiplicative d\'{e}ploy\'{e}e en la place $\infty$ et $\nn$ id\'{e}al non nul de $A$. Alors il existe une application $l_{E} : \PP^{1}(A/\nn) \to \Q$ et un \'{e}l\'{e}ment $\lambda_{E}$ de $\PP^{1}(A/\nn)$ tels qu'on ait, pour tout $\pp$ premier avec $\nn \not\subset \pp$,
\[
 q^{\deg \pp}+1-\# E(\F_\pp) = \sum_{\substack{M \in \ensmatricesmerel_{\pp} \\ \lambda_E M \text{bien d\'{e}fini}}} l_{E}(\lambda_{E} M)
\]
o\`{u} $E(\F_\pp)$ est le groupe des points \`{a} valeurs dans $\F_\pp = A/\pp$ de la r\'{e}duction de $E$ modulo~$\pp$.
\end{prop}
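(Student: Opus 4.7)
Le plan est de combiner directement le théorème~\ref{th-actionheckesymbolesmanin} avec le théorème de modularité pour les courbes elliptiques sur $K$ (Grothendieck, Deligne, Jacquet--Langlands, Drinfeld, voir \cite{gekeler-reversat}) et la Hecke-équivariance de l'accouplement $\langle \cdot , \cdot \rangle$ de Teitelbaum. D'abord, par modularité, la courbe $E$, de conducteur $\nn\cdot \infty$ avec réduction multiplicative déployée à l'infini, correspond à une forme primitive $F_E \in \faut_\nn(\C)$ dont les valeurs propres pour les opérateurs de Hecke $T_\pp$ ($\pp$ premier avec $\nn \not\subset \pp$) sont précisément les entiers $a_\pp(E) = q^{\deg \pp} + 1 - \#E(\F_\pp)$. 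Comme ces valeurs propres sont rationnelles, on peut normaliser $F_E$ pour qu'il soit à valeurs dans $\Q$, \emph{i.e.} $F_E \in \faut_\nn(\Q)$.

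Ensuite, je définirais $l_E : \PP^1(A/\nn) \to \Q$ par $l_E(x) = \langle \xi(x), F_E \rangle$. Cette fonction prend bien ses valeurs dans $\Q$ par construction. Comme $F_E$ est non nul et que l'accouplement $\smod_\pp^0(\Q) \times \faut_\nn(\Q) \to \Q$ est parfait (théorème~\ref{th-accouplementsm}), la forme linéaire $m \mapsto \langle m , F_E \rangle$ sur $\smod_\nn^0(\Q)$ est non nulle~; comme les symboles $\xi(x)$ engendrent $\smod_\nn$, il existe $\lambda \in \PP^1(A/\nn)$ avec $\langle \xi(\lambda),F_E \rangle \neq 0$. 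Quitte à remplacer $F_E$ par un multiple rationnel convenable, je peux donc supposer qu'il existe $\lambda_E \in \PP^1(A/\nn)$ tel que $l_E(\lambda_E)=1$.

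Pour $\pp$ premier avec $\nn \not\subset \pp$, l'opérateur $T_\pp$ est hermitien pour le produit de Petersson et l'accouplement $\langle \cdot, \cdot \rangle$ est compatible aux opérateurs de Hecke (\cite[lem.~9]{teitelbaum-modularsymbols}). On obtient alors successivement
\[
a_\pp(E) = a_\pp(E)\, l_E(\lambda_E) = \langle \xi(\lambda_E), a_\pp(E) F_E \rangle = \langle \xi(\lambda_E), T_\pp F_E \rangle = \langle T_\pp \xi(\lambda_E) , F_E \rangle.
\]
En développant le membre de droite grâce au théorème~\ref{th-actionheckesymbolesmanin}, qui donne
\[
T_\pp \xi(\lambda_E) = \sum_{\substack{M \in \ensmatricesmerel_\pp \\ \lambda_E M \text{ bien défini}}} \xi(\lambda_E M),
\]
et par bilinéarité de $\langle \cdot , \cdot \rangle$, on arrive à la formule annoncée $a_\pp(E) = \sum_M l_E(\lambda_E M)$.

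La démonstration ne présente pas d'obstacle majeur~: les ingrédients essentiels (modularité, Hecke-équivariance de l'accouplement, formule explicite de $T_\pp$) ont tous déjà été établis. Le seul point nécessitant une brève vérification est l'existence de $\lambda_E$ avec $l_E(\lambda_E) \neq 0$, qui résulte de la surjectivité de $\xi$ et de la non-dégénérescence de l'accouplement, et la possibilité de normaliser sur $\Q$, qui exploite la rationalité de $F_E$ fournie par la modularité.
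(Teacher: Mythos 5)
Votre démonstration est correcte et suit essentiellement la même démarche que celle du texte : modularité pour obtenir une forme primitive $F_E \in \faut_\nn(\Q)$ de valeurs propres $a_\pp(E)$, définition de $l_E(x) = \langle \xi(x), F_E\rangle$, existence de $\lambda_E$ par perfection de l'accouplement, puis combinaison de la Hecke-équivariance avec le théorème~\ref{th-actionheckesymbolesmanin}. La seule différence, purement cosmétique, est que vous normalisez en multipliant $F_E$ par un scalaire alors que le texte divise $l_F$ par $l_F(\lambda_E)$.
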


L'ensemble $\ensmatricesmerel_{\pp}$ \'{e}tant ind\'{e}pendant de $\nn$, l'\'{e}nonc\'{e} s'apparente \`{a} une loi de r\'{e}ciprocit\'{e} comme l'a remarqu\'{e} Manin~: elle relie les solutions modulo $\pp$ d'une \'{e}quation d\'{e}pendant de $\nn$ aux solutions modulo $\nn$ d'une \'{e}quation d\'{e}pendant de $\pp$. Pour des r\'{e}sultats similaires sur $\Q$, on renvoie \`{a} Manin \cite[th.~7.3]{manin-symbolesmodulaires}, Mazur \cite{mazur-courbesellsymbmod} et Merel \cite[th.~4]{merel-operateursG0N}. 

\begin{proof}
Soit $E$ une telle courbe elliptique. D'apr\`{e}s le th\'{e}or\`{e}me de modularit\'{e} pour les courbes elliptiques sur $K$, corollaire des travaux de Grothendieck, Jacquet--Langlands, Deligne et Drinfeld (discut\'{e} dans \cite[section~8]{gekeler-reversat}), il existe $F$ primitive dans $\faut_\nn(\Q)$ dont la valeur propre pour $T_{\pp}$ est $a_{\pp}=q^{\deg \pp}+1-\# E(\F_\pp)$, pour tout $\pp$ premier avec $\nn \not\subset \pp$. Consid\'{e}rons l'application
\[
 \begin{array}{rcl}
  l_{F} : \PP^{1}(A/\nn) & \longrightarrow & \Q \\
x & \longmapsto & \langle \xi(x),F \rangle.
 \end{array}
\]
Elle n'est pas identiquement nulle. En effet, comme $F \neq 0$ et l'accouplement est parfait sur $\Q$, il existe au moins un g\'{e}n\'{e}rateur $\xi(x)$ de $\smod^0(\Q)$ avec $ l_F(x) \neq 0$. Fixons un \'{e}l\'{e}ment $\lambda_E$ de $\PP^{1}(A/\nn)$ v\'{e}rifiant $l_F(\lambda_E)) \neq 0$. Pour tout $x \in \PP^{1}(A/\nn)$, on a d'apr\`{e}s le th\'{e}or\`{e}me~\ref{th-actionheckesymbolesmanin} :
\[
 \sum_{M \in \ensmatricesmerel_{\pp} , x M \text{ bien d\'{e}fini}} l_{F}(x M) = \langle T_{\pp} \xi(x),F \rangle =\langle \xi(x),T_{\pp} F \rangle = \langle \xi(x), a_{\pp} F \rangle = a_{\pp} l_{F}(x).
\]
L'application $l_{E} = l_{F} / {l_{F}(\lambda_E)}$ satisfait alors la propri\'{e}t\'{e} souhait\'{e}e.
\end{proof}

\section{Ind\'{e}pendance lin\'{e}aire d'op\'{e}rateurs de Hecke}\label{section-indlin}

Dans cette section on travaille avec $\smod_\pp$, pour $\pp$ premier.

\subsection{L'\'{e}l\'{e}ment d'enroulement}\label{sousection-elemenroul}

\subsubsection{D\'{e}finition et propri\'{e}t\'{e}s}

\begin{defi}
En s'inspirant de \cite{mazur-eisenstein,merel-torsion}, on appelle \emph{\'{e}l\'{e}ment d'enroulement} le symbole modulaire parabolique $\elemenroul \in \smod_{\pp}^{0}(\Q)$ correspondant \`{a} la forme lin\'{e}aire $F \mapsto \langle [0,\infty] , F \rangle$ sur $\faut(\Q)$ d'apr\`{e}s le th\'{e}or\`{e}me~\ref{th-accouplementsm}.
\end{defi}

En particulier, par la formule~\eqref{eq-LF1}, on a pour toute cocha\^{i}ne $F$ de $\faut(\C)$
\begin{equation}\label{eq-LF1e}
 L(F,1) = \frac{1}{q-1} \langle \elemenroul,F \rangle.
\end{equation}

Rappelons que $\overline{\smod_\pp^0}$ (not\'{e} aussi $\overline{\smod^0}$) d\'{e}signe le quotient sans torsion $\smod_\pp^0 / (\smod_\pp^0)\tors$. On l'identifie \`{a} un sous $\Z$-module de $\smod^0(\Q)$. Notons $\eta_{\mm} = T_{\mm} - (q^{\deg \mm}+1)$ dans $\TT$, pour tout premier $\mm \neq \pp$.

\begin{lem}\phantomsection\label{lem-homenroulement}
On a $\eta_{\mm}\left[0,\infty \right] = \eta_{\mm} \elemenroul$ dans $(q-1) \overline{\smod^0}$.
\end{lem}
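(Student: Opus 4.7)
La strat\'{e}gie comporte deux \'{e}tapes~: d'abord montrer que $\eta_\mm[0,\infty]$ et $\eta_\mm\elemenroul$ ont la m\^{e}me image dans $\overline{\smod^0}$, ensuite montrer que cet \'{e}l\'{e}ment commun est divisible par $q-1$.

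Pour la premi\`{e}re \'{e}tape, v\'{e}rifions que $\eta_\mm[0,\infty]$ est parabolique. La formule de Hecke pour $\smod_\pp$ (section~\ref{section-sm}) donne $T_\mm[0,\infty] = \sum_{(a,b,d)}[b/d,\infty]$ avec $(ad)=\mm$, $a, d$ unitaires, $\deg b < \deg d$~; comme $d$ divise un g\'{e}n\'{e}rateur de $\mm$, premier \`{a} $\pp$, chaque $b/d$ repr\'{e}sente la pointe $\Gamma 0$ dans l'ensemble \`{a} deux \'{e}l\'{e}ments $\Gamma_0(\pp) \bs \PP^{1}(K) = \{\Gamma\infty, \Gamma 0\}$. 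En d\'{e}nombrant les $q^{\deg\mm}+1$ termes, il vient $\partial T_\mm[0,\infty] = (q^{\deg\mm}+1)\, \partial[0,\infty]$, donc $\eta_\mm[0,\infty] \in \smod^0_\pp$. L'\'{e}quivariance de Hecke de l'accouplement, la d\'{e}finition de $\elemenroul$ et la formule~\eqref{eq-LF1e} impliquent que pour tout $F \in \faut_\pp$,
\[
\langle \eta_\mm[0,\infty], F\rangle = (q-1)L(\eta_\mm F, 1) = \langle \eta_\mm\elemenroul, F\rangle.
\]
La perfection de l'accouplement sur $\Q$ (th\'{e}or\`{e}me~\ref{th-accouplementsm}) donne alors l'\'{e}galit\'{e} dans $\smod^0_\pp(\Q)$, d'o\`{u} m\^{e}me image dans $\overline{\smod^0}$.

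Le c{\oe}ur de la preuve est la divisibilit\'{e} par $q-1$. L'identit\'{e} $[0,\infty] = \xi(0:1)$ et le th\'{e}or\`{e}me~\ref{th-actionheckesymbolesmanin} donnent $T_\mm\xi(0:1) = \sum_{M \in \ensmatricesmerel_\mm}\xi((0:1)M)$, tous les termes \'{e}tant bien d\'{e}finis puisque $\gcd(c,d)$ divise $\det M = P$, premier \`{a} $\pp$. On consid\`{e}re l'action par conjugaison de $\Fq^{\times}$ sur $\ensmatricesmerel_\mm$~:
\[
\lambda \cdot \matrice{a}{b}{c}{d} = \matrice{\lambda}{0}{0}{1} \matrice{a}{b}{c}{d} \matrice{\lambda^{-1}}{0}{0}{1} = \matrice{a}{\lambda b}{\lambda^{-1}c}{d}.
\]
Elle pr\'{e}serve $\ensmatricesmerel_\mm$~; elle est libre sur les matrices non diagonales (orbites de cardinal $q-1$), et admet pour seuls points fixes les deux matrices diagonales $\matrice{1}{0}{0}{P}$ et $\matrice{P}{0}{0}{1}$ ($\mm = (P)$ \'{e}tant premier, de seuls diviseurs unitaires $1$ et $P$). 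Comme $(0:1)\cdot(\lambda\cdot M) = (\lambda^{-1}c : d)$ et que la relation diagonale de la pr\'{e}sentation de Teitelbaum donne $\xi(\lambda^{-1}c:d) = \xi(c:d)$, les termes sont constants sur chaque orbite. Chaque matrice diagonale contribue $\xi(0:1)$ (puisque $(0:P) = (0:1)$ dans $\PP^{1}(A/\pp)$), d'o\`{u} $T_\mm\xi(0:1) \equiv 2\xi(0:1) \pmod{(q-1)\smod_\pp}$, puis $\eta_\mm\xi(0:1) \equiv -(q^{\deg\mm}-1)\xi(0:1) \equiv 0 \pmod{(q-1)\smod_\pp}$ puisque $q-1$ divise $q^{\deg\mm}-1$.

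Pour conclure, $\pp$ \'{e}tant premier, le quotient $\smod_\pp/\smod^0_\pp \cong B \cong \Z$ est libre, la suite $0 \to \smod^0_\pp \to \smod_\pp \to B \to 0$ est scind\'{e}e, et $(q-1)\smod_\pp \cap \smod^0_\pp = (q-1)\smod^0_\pp$. Combin\'{e} \`{a} la premi\`{e}re \'{e}tape, on obtient $\eta_\mm[0,\infty] \in (q-1)\smod^0_\pp$, dont l'image dans $\overline{\smod^0}$ appartient \`{a} $(q-1)\overline{\smod^0}$. Le point d\'{e}licat est l'identification de la bonne sym\'{e}trie dans $\ensmatricesmerel_\mm$ mettant en \'{e}vidence le facteur $q-1$~: l'action de $\Fq^{\times}$ par conjugaison, accord\'{e}e aux relations diagonales de Teitelbaum, rend la factorisation transparente.
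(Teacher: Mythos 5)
Your proposal is correct. The identification step is exactly the paper's argument: Hecke compatibility of the pairing $\langle\cdot,\cdot\rangle$ plus its perfection over $\Q$, applied to the two parabolic elements $\eta_\mm\left[0,\infty\right]$ and $\eta_\mm\elemenroul$. Where you genuinely differ is in how you obtain parabolicity and the factor $q-1$. The paper simply computes $\eta_\mm\left[0,\infty\right]$ from the definition of $T_\mm$ acting on $\left[0,\infty\right]$: with $M$ the unitary generator of $\mm$, one finds $\eta_\mm\left[0,\infty\right]=\sum_{b\neq 0,\ \deg b<\deg M}\left[b/M,0\right]$; each term is visibly parabolic (since $(bP,M)=1$), and grouping $b$ with $\lambda b$ for $\lambda\in\Fq^{\times}$ (the diagonal matrix $\matrice{\lambda}{0}{0}{1}\in\Gamma_0(\pp)$ gives $\left[\lambda b/M,0\right]=\left[b/M,0\right]$) yields directly $(q-1)\sum_{b\ \text{unitaire}}\left[b/M,0\right]\in(q-1)\smod^0$. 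You instead route through Theorem~\ref{th-actionheckesymbolesmanin} and the free $\Fq^{\times}$-action $\matrice{a}{b}{c}{d}\mapsto\matrice{a}{\lambda b}{\lambda^{-1}c}{d}$ on $\ensmatricesmerel_\mm$ combined with the diagonal relations; this is morally the same $\Fq^{\times}$-symmetry (the paper exploits it in exactly this form in the proof of Proposition~\ref{prop-indlinopheckeen0infty}), but it is heavier and only gives a congruence modulo $(q-1)\smod_\pp$, so you need two extra (correct) steps: the boundary count showing $\eta_\mm\left[0,\infty\right]\in\smod_\pp^0$, and the fact that $B\simeq\Z$ is torsion-free, whence $(q-1)\smod_\pp\cap\smod_\pp^0=(q-1)\smod_\pp^0$. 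Both routes are valid; the paper's direct computation is lighter (it does not need the Hecke-action theorem) and produces the explicit identity $\eta_\mm\left[0,\infty\right]=(q-1)\sum_{b\ \text{unitaire}}\left[b/M,0\right]$, which is reused later (for instance in the proof of Proposition~\ref{prop-basethetapenrouldegre3}), whereas your congruence argument suffices for the lemma itself but does not give that closed formula.
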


\begin{proof}
 Soit $M$ (resp. $P$) le g\'{e}n\'{e}rateur unitaire de $\mm$ (resp. $\pp$). Les id\'{e}aux $\pp$ et $\mm$ \'{e}tant premiers distincts, on a par d\'{e}finition des op\'{e}rateurs de Hecke
\[
 \eta_{\mm} \left[0,\infty \right]= \sum_{b \in A, b \neq 0, \deg b < \deg M } \left[ b/M,0\right].
\]
Par ailleurs, le symbole modulaire $\left[ b/M,0\right]$ est parabolique car $(bP,M)=1$. Enfin, pour tout $\lambda \in \Fq^{\times}$, on a $\left[ \lambda b/M,0\right] = \left[ b/M,0\right]$. Donc
\[
\eta_{\mm}\left[0,\infty \right] = (q-1) \sum_{\deg b < \deg M, \;b \text{ unitaire}} \left[ b/M,0\right]  \quad \in (q-1) \smod^0.
\]
Par l'accouplement $\langle \cdot , \cdot \rangle$, les symboles modulaires $\left[0,\infty \right]$ et $\elemenroul$ d\'{e}finissent la m\^{e}me forme lin\'{e}aire sur les cocha\^{i}nes. Par compatibilit\'{e} de $\langle \cdot , \cdot \rangle$ \`{a} Hecke, il en est de m\^{e}me des symboles modulaires $\eta_{\mm}\left[0,\infty \right] \in \overline{\smod^0}$ et $\eta_{\mm}\elemenroul$. Comme ils sont paraboliques d'apr\`{e}s ce qui pr\'{e}c\`{e}de, ils sont \'{e}gaux par perfection de $\langle \cdot , \cdot \rangle$ sur $\Q$.
\end{proof}

\begin{defi}[voir aussi {\cite[7.10]{pal-torsionjacobiandmc}}]
L'\emph{id\'{e}al d'Eisenstein} $I_{E}$ est l'id\'{e}al de $\TT$ engendr\'{e} par les \'{e}l\'{e}ments $\eta_{\mm}$ pour $\mm$ premier, $\mm \neq \pp$.
\end{defi}
Il n'est pas clair que cette d\'{e}finition co\"{i}ncide avec donn\'{e}e par A.~Tamagawa \cite[p.~230]{tamagawa-eisenstein} comme annulateur du diviseur cuspidal. Par ailleurs d'apr\`{e}s le lemme~\ref{lem-homenroulement}, $I_{E} \elemenroul$ est contenu dans $(q-1) \overline{\smod^0}$. Afin de pr\'{e}ciser le d\'{e}nominateur de l'\'{e}l\'{e}ment d'enroulement, on rappelle un th\'{e}or\`{e}me de P\'{a}l sur la structure de $\TT / I_{E}$, analogue d'un \'{e}nonc\'{e} c\'{e}l\`{e}bre de Mazur \cite{mazur-eisenstein}.

\begin{theo}[({\cite[th.~1.2]{pal-ontheeisensteinideal}})]\phantomsection\label{theo-pal-heckemoduloeisenstein}
Si $\pp$ est premier de degr\'{e} $d$, le groupe ab\'{e}lien $\TT / I_{E}$ est cyclique d'ordre
\[
\ordreeisenstein(\pp) = 
 \begin{cases}
  \frac{q^{d}-1}{q-1} & \text{si $d$ est impair ;} \\
  \frac{q^{d}-1}{q^2-1} & \text{si $d$ est pair.}
 \end{cases}
\]
\end{theo}

\begin{prop}\phantomsection\label{prop-denomenroul}
Il existe un plus petit entier $\denomenroul >0$ tel que $\denomenroul \elemenroul \in \overline{\smod^0}$. On l'appelle \emph{d\'{e}nominateur} de $\elemenroul$. Il divise $\ordreeisenstein(\pp)$~; en particulier, il est premier \`{a} $p$.
\end{prop}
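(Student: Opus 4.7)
The plan is to combine three ingredients: the lattice structure of $\overline{\smod^0}$ inside $\smod^0(\Q)$, Lemma~\ref{lem-homenroulement} which forces $I_{E} \cdot \elemenroul$ to be integral, and P\'al's theorem~\ref{theo-pal-heckemoduloeisenstein} controlling the size of $\TT/I_E$.

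First I would set up the existence of $\denomenroul$. The group $\overline{\smod^0}$ is a free $\Z$-module of finite rank $g$, embedded as a full lattice in the finite-dimensional $\Q$-vector space $\smod^0(\Q) = \overline{\smod^0} \otimes_\Z \Q$. Since $\elemenroul$ lies in $\smod^0(\Q)$, the set $\{ n \in \Z_{>0} \mid n \elemenroul \in \overline{\smod^0} \}$ is nonempty and admits a minimum, which we take as $\denomenroul$.

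Next I would show $\denomenroul \mid \ordreeisenstein(\pp)$. Consider the image $\bar{\elemenroul}$ of $\elemenroul$ in the abelian quotient $\smod^0(\Q) / \overline{\smod^0}$, on which the Hecke algebra $\TT$ still acts (since $\TT$ preserves $\overline{\smod^0}$). By Lemma~\ref{lem-homenroulement}, for every prime $\mm \neq \pp$ we have $\eta_{\mm} \elemenroul = \eta_{\mm} [0,\infty]$ as elements of $(q-1)\overline{\smod^0} \subset \overline{\smod^0}$; in particular $\eta_{\mm}$ kills $\bar{\elemenroul}$. Hence the Eisenstein ideal $I_E$ annihilates $\bar{\elemenroul}$, so the cyclic $\TT$-module $\TT \cdot \bar{\elemenroul}$ is a quotient of $\TT / I_E$. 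By P\'al's theorem, $\#(\TT/I_E) = \ordreeisenstein(\pp)$, so $\# (\TT \cdot \bar{\elemenroul})$ divides $\ordreeisenstein(\pp)$. The cyclic abelian subgroup $\Z \cdot \bar{\elemenroul}$, which has order exactly $\denomenroul$ by definition of the denominator, sits inside $\TT \cdot \bar{\elemenroul}$, giving $\denomenroul \mid \ordreeisenstein(\pp)$.

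Finally the coprimality with $p$ is formal: by the explicit formulas of Theorem~\ref{theo-pal-heckemoduloeisenstein}, $\ordreeisenstein(\pp)$ divides $q^{d} - 1$, which is prime to $p$ since $q$ is a power of $p$. The only step that requires any care is ensuring that the equality of Lemma~\ref{lem-homenroulement} really takes place inside the integral lattice $\overline{\smod^0}$ (not merely rationally), so that $\eta_{\mm}$ truly annihilates $\bar{\elemenroul}$ in the quotient; this is exactly what the lemma provides, and is the hinge of the argument. All remaining manipulations are elementary module theory.
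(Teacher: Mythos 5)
Votre démonstration est correcte et suit essentiellement la même voie que celle de l'article : on utilise le lemme~\ref{lem-homenroulement} pour voir que $I_E$ annule la classe de $\elemenroul$ dans $\smod^0(\Q)/\overline{\smod^0}$, puis le théorème~\ref{theo-pal-heckemoduloeisenstein} de Pál pour borner l'ordre du module cyclique $\TT\bar{\elemenroul}$, d'où la divisibilité de $\denomenroul$ par passage au sous-groupe $\Z\bar{\elemenroul}$. Votre vérification préalable de l'existence de $\denomenroul$ via la structure de réseau de $\overline{\smod^0}$ est un complément inoffensif que l'article obtient implicitement en déduisant que $\bar{\elemenroul}$ est d'ordre fini.
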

\begin{proof}
Soit $\elemenroul'$ la classe de $\elemenroul$ dans $\smod^0(\Q) /  \overline{\smod^0}$. Comme $I_{E} \elemenroul$ est contenu dans $\overline{\smod^0}$, l'application canonique $\TT \to \TT \elemenroul'$ passe au quotient en un homomorphisme surjectif de groupes ab\'{e}liens $\TT / I_{E} \to \TT \elemenroul'$. D'apr\`{e}s le th\'{e}or\`{e}me~\ref{theo-pal-heckemoduloeisenstein}, $\TT \elemenroul'$ est donc fini d'ordre divisant $\ordreeisenstein(\pp)$. On en d\'{e}duit que $\elemenroul'$ est d'ordre fini, divisant $\ordreeisenstein(\pp)$, dans le groupe $\smod^0 (\Q) / \overline{\smod^0}$. Cet ordre est le d\'{e}nominateur de $\elemenroul$. Il est premier \`{a} $p$ car $p \nmid \ordreeisenstein(\pp)$.
\end{proof}

\subsubsection{Exemple de calcul de \texorpdfstring{$\elemenroul$}{e} et \texorpdfstring{$\denomenroul$}{de}}\label{soussousection-exemplecalcule}

Rappelons l'isomorphisme $\alpha : \smod^{0}(\Q) \simfleche \faut(\Q)$ d\'{e}duit du lemme~\ref{lem-isomfautsmod}. Pour $\pp$ premier de degr\'{e} $3$, on propose d'expliciter $\alpha(\elemenroul$) dans une base de $\faut_\pp(\Q)$. Notre calcul repose sur une description du graphe $\Gamma \bs \T$ donn\'{e}e par Gekeler. Ce graphe est de genre $q$ et poss\`{e}de deux pointes not\'{e}es $[0]$ et $[\infty]$. D'apr\`{e}s \cite[5.3]{gekeler-automorpheformen} et \cite[section 6]{gekeler-onthecuspidaldivisor}, en reprenant les notations de ce dernier, la structure de $\Gamma \bs \T$ est~:
\[
 \xymatrix{
 & \bullet \ar@{.>}[r]^{\tilde{e}_{x}} \ar@{<-}[d]_{\tilde{e}_{[0]}} & \bullet \ar@{<-}[d]^{\tilde{e}_{[\infty]}} & \\
 [0] \cdots \bullet \ar@{>}[r]_{e_{[0]}} & \bullet \ar@{>}[r]_{e_{1}} & \bullet \ar@{<-}[r]_{e_{[\infty]}} & \bullet \cdots [\infty]
}
\]
o\`{u} $\xymatrix{
 \ar@{.>}[r]^{\tilde{e}_{x}} & 
}$
d\'{e}signe $q$ ar\^{e}tes index\'{e}es par $x \in \Fq$. De plus, la projection dans le groupe $H_{1}(\Gamma \bs \T,\ptes,\Z)$ de la g\'{e}od\'{e}sique de $\T$ reliant le bout $0$ au bout $\infty$ passe successivement par les ar\^{e}tes $e_{[0]}$, $e_{1}$ et l'ar\^{e}te oppos\'{e}e de $e_{[\infty]}$. Suivant Gekeler, pour $x \in \Fq$, on note $\varphi_{x}$ l'unique \'{e}l\'{e}ment de $\faut$ v\'{e}rifiant
\[
 \varphi_{x}(\tilde{e}_{[\infty]})=-1 \;, \; \varphi_{x}(\tilde{e}_{y}) = \delta_{xy} \qquad (y \in \Fq)
\]
o\`{u} $\delta$ est le symbole de Kronecker. Alors $\{ \varphi_{x} \}_{x \in \Fq}$ est une base de $\faut$. Notons $\{ \varphi_{x}^{'} \}_{x}$ la base duale de $\Hom(\faut,\Z)$. La forme lin\'{e}aire $F \mapsto \langle \left[0,\infty \right], F \rangle$ s'\'{e}crit $\sum_{x \in \Fq} \varphi_{x}^{'}$. Exprimons maintenant la cocha\^{i}ne $\alpha(\elemenroul) \in \faut(\Q)$ dans la base $\{\varphi_{x} \}_{x}$. Elle est d\'{e}termin\'{e}e de fa\c{c}on unique par la relation $ (\alpha(\elemenroul),\cdot)_{\mu} = \sum_{x \in \Fq} \varphi^{'}_{x}$. En calculant le volume de chaque ar\^{e}te du graphe \`{a} l'aide de \cite[lem.~5.6]{gekeler-automorpheformen} ou \cite[sec.~6]{gekeler-onthecuspidaldivisor}, on d\'{e}duit que la matrice du produit de Petersson dans la base $\{ \varphi_{x} \}_x$ est $I+(q+1)J$, o\`{u} $I$ est la matrice identit\'{e} et $J$ la matrice dont tous les coefficients sont \'{e}gaux \`{a} $1$ (elles sont carr\'{e}es de taille $q$). Un calcul d'alg\`{e}bre lin\'{e}aire donne alors
\[
 \alpha(\elemenroul) = \frac{1}{q^2+q+1} \sum_{x \in \Fq} \varphi_{x} \quad \in \faut(\Q).
\]
En particulier, si $\pp$ est premier de degr\'{e}~$3$, le d\'{e}nominateur $\denomenroul$ est exactement $\ordreeisenstein(\pp)$.

\medskip
Maintenant, sur un exemple, on exprime $\elemenroul$ dans la base explicite du th\'{e}or\`{e}me~\ref{theo-basesm-intro}. Cela permettra de comparer cette base \`{a} celle de Gekeler \'{e}voqu\'{e}e dans la remarque~\ref{remarque-basegek}.

\begin{exemple}[$q=2$ et $\pp=(T^3+T+1)$ id\'{e}al premier]
Nous avons vu qu'une base de l'espace $\smod_\pp^0$ est $\{ \xi(T:1), \xi(T+1:1)\}$. Pour $\mm$ id\'{e}al de degr\'{e} $1$, de g\'{e}n\'{e}rateur unitaire $m$, on a $\eta_{\mm}\elemenroul = -(q-1) \xi(m:1)$ (voir la preuve du lemme~\ref{lem-homenroulement}). Cela donne
\begin{equation}\label{ex-e-eta}
\eta_{T}\elemenroul = (T_{(T)}-3) \elemenroul = -\xi(T:1). 
\end{equation}
Par ailleurs, la matrice de l'op\'{e}rateur $T_{(T)}$ dans la base est $\matrice{-3}{-1}{2}{1}$. De~\eqref{ex-e-eta}, on d\'{e}duit
\[
\elemenroul = \frac{1}{7} (\xi(T:1)+\xi(T+1:1)). 
\]
Avec les notations pr\'{e}c\'{e}dentes, la base de Gekeler pour $\faut$ est $\{ \varphi_{0}$, $\varphi_{1} \}$. Notons $i$ l'injection $\faut \to \Hom(\faut,\Z)$ donn\'{e}e par le produit de Petersson et $v= i \circ \alpha : \smod \to \Hom(\faut,\Z)$. Si $\mm$ est de degr\'{e} $1$, on a $v(\xi(m:1)) = -\eta_{\mm}^{t} (v(\elemenroul)) = -\eta_{\mm}^{t}(\varphi_0^{'} + \varphi_1^{'})$, o\`{u} $\eta_{\mm}^{t}$ d\'{e}signe l'application transpos\'{e}e de $\eta_{\mm}$. Par ailleurs, les matrices des op\'{e}rateurs $T_{(T)}$ et $T_{(T+1)}$ dans $\{ \varphi_0,\varphi_1\}$ sont respectivement $\matrice{-3}{-1}{2}{1}$ et $\matrice{2}{1}{-2}{-2}$ d'apr\`{e}s \cite[6.8]{gekeler-automorpheformen}\footnote{
Dans cette r\'{e}f\'{e}rence, les op\'{e}rateurs de Hecke agissent \`{a} droite. Les matrices de l'exemple (6.10) sont donc les transpos\'{e}es des n\^{o}tres.}. On en d\'{e}duit $v(\xi(T:1)) = 4 \varphi_0^{'} + 3 \varphi_1^{'}$ et $v(\xi(T+1:1)) = 3 \varphi_0^{'} + 4\varphi_1^{'}$. Comme la matrice du produit de Petersson est $\matrice{4}{3}{3}{4}$, on a alors $\alpha(\xi(T:1)) = \varphi_0$ et $\alpha(\xi(T+1:1)) = \varphi_1$. Sur cet exemple, la base de Gekeler co\"{i}ncide donc avec celle du corollaire~\ref{cor-basefaut}.
\end{exemple}

\subsection{Ind\'{e}pendance lin\'{e}aire d'op\'{e}rateurs de Hecke en l'\'{e}l\'{e}ment d'enroulement}

On \'{e}tablit l'\'{e}nonc\'{e} sur $\left[0,\infty \right]$ puis on le rel\`{e}ve \`{a} $\elemenroul$.

\begin{prop}\phantomsection\label{prop-indlinopheckeen0infty}
Soient $R$ un anneau commutatif int\`{e}gre dans lequel $q-1$ est non nul et $r \geq 0$ un entier. Si $ \deg \pp \geq 2r+1$, la famille $\{ T_{\mm}\left[0, \infty \right] \}_{\deg \mm \leq r}$ est libre sur $R$ dans $\smod_\pp(R)$.
\end{prop}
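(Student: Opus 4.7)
L'idée est d'appliquer la formule explicite du théorème~\ref{th-actionheckesymbolesmanin} à $[0,\infty] = \xi(0:1)$, ce qui donne
\[
T_\mm [0,\infty] = \sum_{\matrice{a}{b}{c}{d} \in \ensmatricesmerel_\mm} \xi(c:d).
\]
La condition de validité $(c)+(d)+\pp=A$ est ici automatique puisque $d$ est unitaire de degré $\leq \deg \mm \leq r < \deg \pp$. De plus $\deg c, \deg d \leq r < \deg \pp/2$, donc chaque $\xi(c:d)$ s'interprète via le lemme~\ref{lem-comparaisonvariantesm} comme un élément du sous-module $M_r(R) \hookrightarrow \smod_\pp(R)$, lui-même décomposé par la proposition~\ref{prop-decomp} en $\bigoplus_{0 \leq k \leq r} N_k(R)$.

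L'étape centrale consiste à calculer, pour $\mm$ de degré $s \geq 1$, la projection $\pi_s$ de $T_\mm[0,\infty]$ sur $N_s(R)$. Les matrices $\matrice{a}{b}{c}{d} \in \ensmatricesmerel_\mm$ avec $\deg d < s$ donnent des $\xi(c:d)$ se résolvant dans $\bigoplus_{k<s} N_k(R)$ et ne contribuent donc pas à $\pi_s$. Celles avec $\deg d = s$ vérifient, via $\deg a + \deg d = s$ et $a$ unitaire, $a=1$, $b=0$, $d=P_\mm$ (générateur unitaire de $\mm$), et $c$ parcourt librement les polynômes de degré $<s$. Le terme $c=0$ donne $\xi(0:P_\mm) = -\xi(1:0) \in N_0(R)$. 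Pour $c$ non nul, on pose $u = c/\lambda_c$ unitaire, et les formules issues de la première partie de la preuve de la proposition~\ref{prop-basevariante} (valables dans $M_r(R)$ indépendamment de la parité de $\deg \pp$) entraînent $\xi(c:P_\mm) = -\xi(P_\mm:u)$~; cet élément appartient à la base de $N_s(R)$ uniquement lorsque $\gcd(u,P_\mm)=1$, et sinon il s'identifie dans $\PP^{1}(A/\pp)$ à $\xi(P_\mm/g:u/g)$ pour $g = \gcd(u,P_\mm)$, lequel retombe dans $N_{s-\deg g}(R)$. En regroupant par $u$, il vient
\[
\pi_s(T_\mm[0,\infty]) = -(q-1) \sum_{\substack{u \text{ unitaire}\\ \deg u < s,\ (u,P_\mm)=1}} \xi(P_\mm:u),
\]
expression non nulle puisque $u=1$ fournit toujours un terme.

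La conclusion vient d'une récurrence descendante sur $s$. Soit $\sum_{\deg \mm \leq r} \lambda_\mm T_\mm[0,\infty] = 0$. Projetant sur $N_r(R)$, seuls les $T_\mm$ avec $\deg \mm = r$ contribuent~; les supports $\{\xi(P_\mm:u)\}_u$ pour différents $\mm$ de degré $r$ sont deux à deux disjoints dans la base $\baseexplicite$ de $N_r(R)$ (l'élément $P_\mm$ détermine $\mm$). Comme $q-1$ n'est pas diviseur de zéro dans l'anneau intègre $R$, on conclut $\lambda_\mm = 0$ pour tout $\deg \mm = r$. On itère sur $N_{r-1}(R), \ldots, N_1(R)$, annulant successivement tous les $\lambda_\mm$ avec $\deg \mm \geq 1$. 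Reste $\lambda_A T_A [0,\infty] = -\lambda_A \xi(1:0) = 0$, d'où $\lambda_A = 0$ car $\xi(1:0)$ engendre le sous-module libre de rang un $\mathbf{N}_{\pp,0}(R) \cong R$.

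Le point le plus délicat me paraît être le traitement des termes $\xi(c:P_\mm)$ avec $\gcd(c,P_\mm) \neq 1$, en particulier quand $\mm$ n'est pas un idéal premier~: il faut s'assurer qu'ils se résolvent intégralement dans $\bigoplus_{k<s} N_k(R)$ et ne polluent pas $\pi_s$. Ceci repose sur l'identification $\xi(P_\mm:u) = \xi(P_\mm/g:u/g)$ dans $\PP^{1}(A/\pp)$ pour $g = \gcd(u,P_\mm)$, légitime car $g$ a pour degré $<\deg \pp$ et est donc inversible modulo $\pp$.
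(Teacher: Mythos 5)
Votre démonstration est correcte et suit essentiellement la même voie que celle de l'article : appliquer le théorème~\ref{th-actionheckesymbolesmanin} à $[0,\infty]=\xi(0:1)$, contrôler les degrés des coefficients des matrices de $\ensmatricesmerel_{\mm}$ (en particulier $\deg a+\deg d=\deg\mm$), puis invoquer la famille libre du théorème~\ref{theo-baseexplicite} (valable sans hypothèse de parité, avec $r<\deg\pp/2$) pour extraire le coefficient de plus haut degré et conclure par récurrence sur le degré, $q-1$ étant non diviseur de zéro dans $R$. La seule différence, de présentation, est que vous calculez la projection complète de $T_{\mm}[0,\infty]$ sur $\mathbf{N}_{\pp,\deg\mm}(R)$ en identifiant les matrices $\matrice{1}{0}{c}{P_{\mm}}$, là où l'article isole seulement le coefficient du symbole $\xi(n:1)$ ; les ingrédients et le mécanisme sont les mêmes.
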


\begin{proof}
On proc\`{e}de par r\'{e}currence sur $r$. Comme $\pp \neq A$, le symbole modulaire $\left[0,\infty \right]$ est non nul, ce qui d\'{e}montre l'affirmation pour $r=0$. Supposons l'\'{e}nonc\'{e} v\'{e}rifi\'{e} au rang $r-1$ et l'existence d'une relation 
\begin{equation}\label{eq-th-indeplineaireinitialisation-relation1}
\sum_{\deg \mm \leq r} \lambda_{\mm} T_{\mm} \left[0, \infty \right]= 0
\end{equation}
avec $\lambda_{\mm} \in R$. Montrons que $\lambda_{\nn}=0$ pour tout $\nn$ de degr\'{e} $r$. L'hypoth\`{e}se de r\'{e}currence permettra alors de conclure. Le th\'{e}or\`{e}me~\ref{th-actionheckesymbolesmanin} appliqu\'{e} \`{a} $\left[0, \infty\right] = \xi(0:1)$ donne
\[
 T_{\mm} \left[0,\infty\right] = \sum_{\matrice{a}{b}{c}{d} \in \ensmatricesmerel_{\mm},(c)+(d)+\pp =A} \xi (c:d)
= - \sum_{\matrice{a}{b}{c}{d} \in \ensmatricesmerel_{\mm},(c)+(d)+\pp =A} \xi(d:c)
\]
o\`{u} la derni\`{e}re \'{e}galit\'{e} provient de $\xi(c:d) = -\xi(-d:c) = -\xi(d:c)$. Par ailleurs, $\matrice{a}{b}{c}{d} \in \ensmatricesmerel_{\mm}$ si et seulement si $\matrice{a}{\lambda^{-1} b}{\lambda c}{d} \in \ensmatricesmerel_{\mm}$ pour tout $\lambda \in \Fq^{\times}$. Puisque $\xi(d:\lambda c ) = \xi(d:c)$ on a donc
\begin{equation}\label{eq-th-ind2plineaireinitialisation-relation2}
T_{\mm}\left[0, \infty \right] =  - k \xi(1:0) - (q-1) \sum_{\matrice{a}{b}{c}{d} \in \ensmatricesmerel_{\mm}, \; c \text{ unitaire}, \; (c)+(d)+\pp=A} \xi(d:c)
\end{equation}
o\`{u} $k$ est le nombre de rel\`{e}vements de $(0:1)$ en matrices de $\ensmatricesmerel_\mm$. Notons $u_{\mm}$ l'ensemble des $(d,c) \in A \times A$ avec $c$ unitaire tels qu'il existe $a$, $b$ dans $A$ avec $\matrice{a}{b}{c}{d} \in \ensmatricesmerel_{\mm}$. Notons $n$ le g\'{e}n\'{e}rateur unitaire de $\nn$. Des consid\'{e}rations \'{e}l\'{e}mentaires montrent que $(d:c) = (n:1)$ pour un $(d,c) \in u_{\mm}$ (avec $\deg \mm \leq r$) si et seulement si $\mm=\nn$ et $(d,c)=(n,1)$ dans $A \times A$. En isolant le terme en $\xi(n:1)$ dans \eqref{eq-th-indeplineaireinitialisation-relation1} \`{a} l'aide de \eqref{eq-th-ind2plineaireinitialisation-relation2}, on obtient alors
\[
(q-1) \lambda_{\nn} \xi(n:1) = k' \xi(1:0) + \sum_{(d,c) \in v_{r}, (c)+(d)+\pp=A} \alpha_{d,c} \; \xi(d:c)
\]
avec $k'$ et $\alpha_{d,c}$ dans $R$ et $v_{r} = (\cup_{\deg \mm \leq r} u_{\mm}) \prive \{ (n,1) \}$. On constate que, quitte \`{a} changer les coefficients $\alpha_{d,c}$, on peut aussi supposer $d$ et $c$ premiers entre eux. Par hypoth\`{e}se, on a $r < \deg(\pp)/2$ donc les symboles modulaires
\[
\{ \xi(1:0), \xi(n:1) \} \cup \{ \xi(d:c) \mid (d,c)\in v_{r}, (d)+(c)=A \} 
\]
forment une sous-famille de celle, libre, du th\'{e}or\`{e}me~\ref{theo-baseexplicite}. Le th\'{e}or\`{e}me appliqu\'{e} \`{a} $R$ donne $(q-1) \lambda_{\nn} = 0$. Comme $q-1$ est non nul dans l'anneau int\`{e}gre $R$, on conclut $\lambda_{\nn}=0$.
\end{proof}

\begin{remarque}
D'apr\`{e}s la relation \eqref{eq-th-ind2plineaireinitialisation-relation2}, l'\'{e}nonc\'{e} de la proposition n'est plus vrai si la caract\'{e}ristique de $R$ divise $q-1$.
\end{remarque}

On donne des \'{e}nonc\'{e}s de rel\`{e}vement en caract\'{e}ristique $0$ et $p$ (la caract\'{e}ristique de $K$). Notons $\widetilde{\elemenroul}$ la classe de $\denomenroul \elemenroul$ dans $\overline{\smod_\pp^0} / p \overline{\smod_\pp^0}$.

\begin{lem}\phantomsection\label{lem-relevementindlin0inftyae}
 \begin{enumerate}
  \item Supposons la famille $\{ T_{\mm}\left[0,\infty \right] \}_{\deg \mm \leq r+1} $ libre sur $\Z$ dans $\smod_\pp$. Alors la famille $\{ T_{\mm}\elemenroul \}_{\deg \mm \leq r}$ est libre sur $\Z$.
 \item Supposons la famille $ \{ T_{\mm}\left[0,\infty \right] \}_{\deg \mm \leq r+1}$ libre sur $\Fp$ dans $\smod_\pp(\Fp)$. Alors la famille $\{ T_{\mm} \widetilde{\elemenroul} \}_{\deg \mm \leq r}$ est libre sur $\Fp$ dans $\overline{\smod_\pp^0} / p \overline{\smod_\pp^0}$.
 \end{enumerate}
\end{lem}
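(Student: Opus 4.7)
Mon approche consiste à exploiter la relation-clé $\eta_\qq \elemenroul = \eta_\qq [0,\infty]$ du lemme~\ref{lem-homenroulement} (valable dans $\overline{\smod_\pp^0}$) pour convertir toute dépendance linéaire parmi les $T_\mm \elemenroul$ en une dépendance parmi les $T_\nn [0,\infty]$, et ensuite invoquer l'hypothèse de liberté. On y prendra $\qq$ premier de degré~$1$ distinct de $\pp$ ; un tel $\qq$ existe car $\deg \pp \geq 2r+3 \geq 3$.

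Je procéderais par récurrence sur $r$. Le cas de base $r = 0$ demande $\elemenroul \neq 0$ (resp. $\widetilde{\elemenroul} \neq 0$) : si l'élément d'enroulement s'annulait, on aurait $\eta_\qq [0,\infty] = (T_\qq - q - 1)[0,\infty] = 0$ (resp. $\equiv 0 \pmod{p}$), ce qui contredit la liberté supposée de $\{[0,\infty], T_\qq [0,\infty]\}$. Pour le pas inductif, partant d'une relation $\sum_{\deg \mm \leq r} \lambda_\mm T_\mm \elemenroul = 0$, je lui appliquerais $\eta_\qq$ ; par commutativité de $\TT$ et le lemme~\ref{lem-homenroulement}, j'obtiens $\sum \lambda_\mm T_\mm \eta_\qq [0,\infty] = 0$ dans $\overline{\smod_\pp^0}$. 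Je développerais les produits $T_\mm T_\qq$ grâce aux relations de Hecke standard (à savoir $T_\mm T_\qq = T_{\mm\qq}$ si $\qq \nmid \mm$, et $T_{\mm\qq} + q T_{\mm/\qq}$ si $\qq \mid \mm$, valides puisque $\qq \neq \pp$) pour récrire l'égalité sous la forme $\sum_{\deg \nn \leq r+1} \mu_\nn T_\nn [0,\infty] = 0$ dans le module approprié. L'hypothèse de liberté impose alors $\mu_\nn = 0$ pour tout $\nn$ ; en particulier, pour $\nn$ de degré $r+1$ divisible par $\qq$, seul le terme $\lambda_{\nn/\qq}$ contribue à $\mu_\nn$, d'où $\lambda_\mm = 0$ pour tout $\mm$ de degré $r$. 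L'hypothèse de récurrence conclut.

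L'étape la plus délicate est le va-et-vient entre les modules $\overline{\smod_\pp^0}$, $\smod_\pp^0$ et $\smod_\pp$, nécessaire pour invoquer la liberté dans $\smod_\pp$ (ou dans $\smod_\pp(\Fp)$). Pour la partie 1, je multiplierais par $N = |(\smod_\pp^0)\tors|$ (qui divise $q+1$) afin de relever l'égalité de $\overline{\smod_\pp^0}$ à $\smod_\pp^0 \subset \smod_\pp$, et je simplifierais ensuite le facteur $N$ grâce à l'absence de torsion dans $\Z$. Pour la partie 2, le fait que $\denomenroul$ soit premier à $p$ (proposition~\ref{prop-denomenroul}) permet d'inverser $\denomenroul$ modulo $p$, et comme $p \nmid q+1$ (car $q$ est une puissance de $p$), donc $p \nmid |(\smod_\pp^0)\tors|$, l'image du sous-groupe de torsion s'annule dans $\smod_\pp^0(\Fp)$ et la relation se remonte sans perte dans $\smod_\pp(\Fp)$. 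Ce contrôle conjoint du dénominateur $\denomenroul$ et de la torsion, tous deux premiers à $p$, est le point qu'il faut justifier soigneusement.
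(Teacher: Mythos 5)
Votre démonstration est correcte et suit pour l'essentiel la même démarche que l'article~: application de $\eta_\qq$ pour un idéal premier $\qq$ de degré $1$, transfert vers $\left[0,\infty\right]$ via le lemme~\ref{lem-homenroulement}, isolement des coefficients de degré maximal grâce à la multiplicativité des opérateurs de Hecke, puis contrôle de la torsion (multiplication par son ordre pour la partie enti\`{e}re, primalité à $p$ de cet ordre et de $\denomenroul$ pour la partie modulo $p$). La seule différence est de présentation~: vous organisez l'argument en récurrence sur $r$ là où l'article élimine les coefficients par degrés décroissants au sein d'une même relation, ce qui revient au même.
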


\begin{proof}
Supposons qu'il existe $\lambda_\mm \in \Z$ pour $\deg \mm \leq r$ avec $\sum_{\mm} \lambda_{\mm} T_{\mm} \elemenroul = 0$. Fixons un id\'{e}al $\nn$ de $A$ de degr\'{e} $1$. En appliquant l'\'{e}l\'{e}ment $\eta_\nn$ de l'anneau commutatif $\TT$, on obtient $ \sum_{\mm} \lambda_{\mm} T_{\mm} \eta_{\nn} \elemenroul = 0$. Puis, d'apr\`{e}s le lemme~\ref{lem-homenroulement}, on a l'\'{e}galit\'{e} dans $\overline{\smod_\pp^0}$
\begin{equation}\label{eq-relTm}
 \sum_{\mm} \lambda_{\mm} T_\mm T_\nn \left[0,\infty \right] - (q+1) \sum_{\mm} \lambda_\mm T_{\mm}\left[0,\infty \right] =0.
\end{equation}
Les op\'{e}rateurs $T_{\mm}$ satisfont aux propri\'{e}t\'{e}s usuelles ci-dessous ($\qq$ est premier)~:
\begin{align*}
T_{\mm} T_{\mathfrak{m}'} &= T_{\mathfrak{mm'}} \quad \text{si } \mm + \mm' = A\\
T_{\qq^i}T_{\qq} &= T_{\qq^{i+1}} +  q^{\deg  \qq} T_{\qq^{i-1}} \quad \text{si } \qq + \pp= A\\
T_{\qq^i} &= (T_{\qq})^i \quad \text{si } \qq + \pp \neq A.
\end{align*}
En particulier $T_{\mm}T_{\nn}$ est la somme de $T_{\mm\nn}$ et d'une combinaison lin\'{e}aire sur $\Z$ d'op\'{e}rateurs $T_{\mathfrak{r}}$ o\`{u} $\deg \mathfrak{r} < \deg(\mm\nn)$ c'est-\`{a}-dire $\deg \mathfrak{r} \leq r$. De \eqref{eq-relTm} on d\'{e}duit une expression de $\lambda_\mm T_{\mm\nn}\left[0,\infty \right]$ dans $\overline{\smod_\pp^0}$ comme combinaison lin\'{e}aire de $(T_{\mathfrak{r}}\left[0,\infty\right])_{\deg \mathfrak{r} \leq r}$. Supposons $\mm$ de degr\'{e}~$r$. Par hypoth\`{e}se, la famille $\{ T_{\mm}\left[0,\infty \right] \}_{\deg \mm \leq r+1} $ \'{e}tant libre dans $\smod_\pp$, son image dans  $\smod_\pp / (\smod_\pp)\tors$ est aussi libre sur $\Z$. Comme $\deg(\mm\nn) = r+1$, le coefficient $\lambda_\mm$ est donc nul pour tout $\mm$ de degr\'{e} $r$. En reportant dans \eqref{eq-relTm} et en appliquant le m\^{e}me raisonnement \`{a} $\mm$ de degr\'{e} $r-1$, puis $r-2$, et ainsi de suite, on trouve $\lambda_\mm = 0$ pour tout $\mm$ de degr\'{e} $\leq r$. Ainsi la famille est 
libre.

Passons \`{a} $\Fp$. Supposons qu'on ait $\sum_\mm \lambda_\mm T_\mm \widetilde{\elemenroul} = 0$ dans $\overline{\smod_\pp^0} / p\overline{\smod_\pp^0}$ pour $\lambda_\mm \in \Z$. Un raisonnement similaire au pr\'{e}c\'{e}dent affirme que l'\'{e}l\'{e}ment
\[
\sum_\mm \lambda_\mm \denomenroul T_\mm \eta_\nn[0,\infty] = \sum_\mm \lambda_\mm \denomenroul T_\mm \eta_\nn \elemenroul 
\]
est dans $p \overline{\smod_\pp^0}$, qui s'injecte dans $p\overline{\smod_\pp}$, en notant $\overline{\smod_\pp} = \smod_\pp / (\smod_\pp)\tors$. Par ailleurs, $\pp$ \'{e}tant premier, le groupe $(\smod_\pp)\tors$ est trivial ou cyclique d'ordre $(q+1)$ (\cite[p.~278]{teitelbaum-modularsymbols}) donc d'ordre premier \`{a} $p$. Il y a donc un isomorphisme canonique $\smod_\pp / p\smod_\pp \simeq \overline{\smod_\pp} / p \overline{\smod_\pp}$. Donc l'image de $\sum_\mm \lambda_\mm \denomenroul T_\mm \eta_\nn[0,\infty]$ est nulle dans le $\Fp$-espace vectoriel $\smod_\pp / p\smod_\pp $. Par ailleurs, le d\'{e}nominateur $\denomenroul$ \'{e}tant premier \`{a} $p$ (proposition~\ref{prop-denomenroul}), cela revient \`{a} $\sum_\mm \lambda_\mm T_\mm \eta_\nn[0,\infty] = 0$ dans $\smod_\pp / p\smod_\pp  = \smod_\pp(\Fp)$. La fin de l'argument est similaire \`{a} celui sur $\Z$.
\end{proof}

\begin{theo}\phantomsection\label{th-indlinopHeckeene}
Soient $\pp$ un id\'{e}al premier de degr\'{e} $\geq 3$ et $r$ la partie enti\`{e}re de $(\deg(\pp)-3)/2$.
\begin{enumerate}
 \item Les symboles modulaires $\{ T_{\mm}\elemenroul \}_{\deg \mm \leq r}$ sont libres sur $\Z$ dans $\TT \elemenroul \subset \smod_\pp^{0}(\Q)$.
 \item\label{th-indlinopHeckeene-modp} Les symboles modulaires $\{ T_{\mm}\widetilde{\elemenroul} \}_{\deg \mm \leq r}$ sont libres sur $\Fp$ dans $\overline{\smod_\pp^0} / p \overline{\smod_\pp^0}$.
\end{enumerate}
\end{theo}

\begin{proof}
Comme $\deg \pp \geq 2r+3$, la famille $ \{ T_{\mm} \left[0,\infty \right] \}_{\deg \mm \leq r+1}$ est libre sur $\Z$ dans $\smod_\pp$ d'apr\`{e}s la proposition~\ref{prop-indlinopheckeen0infty}. On rel\`{e}ve le r\'{e}sultat dans $\TT \elemenroul$ \`{a} l'aide du lemme~\ref{lem-relevementindlin0inftyae}. Le deuxi\`{e}me \'{e}nonc\'{e} se prouve de fa\c{c}on similaire avec la proposition~\ref{prop-indlinopheckeen0infty} pour $R=\Fp$.
\end{proof}

\begin{cor}\phantomsection
Les affirmations suivantes sont \'{e}quivalentes pour $\pp$ premier~:
\begin{enumerate}
 \item\label{enu-enonnul-i} $\elemenroul \neq 0$ ;
 \item\label{enu-enonnul-ii} $g>0$ ;
 \item\label{enu-enonnul-iii} $\deg \pp \geq 3$.
\end{enumerate}
\end{cor}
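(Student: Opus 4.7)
L'implication (\ref{enu-enonnul-ii})$\iff$(\ref{enu-enonnul-iii}) est imm\'{e}diate \`{a} partir des formules~\eqref{eq-genre} de Gekeler~: si $d = \deg \pp$ vaut $1$, alors $g = (q-q)/(q^2-1) = 0$~; si $d=2$, alors $g = (q^2-q^2)/(q^2-1) = 0$~; et si $d \geq 3$, l'expression donn\'{e}e par~\eqref{eq-genre} est strictement positive (qu'elle soit paire ou impaire).

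Pour (\ref{enu-enonnul-i})$\Rightarrow$(\ref{enu-enonnul-ii}), on remarque que $\elemenroul$ vit par d\'{e}finition dans $\smod_{\pp}^0(\Q)$. Or, d'apr\`{e}s la proposition~\ref{prop-symbolesmodulairesethomologie} et la discussion qui suit, $\smod_{\pp}^0(\Q)$ est de dimension $g$ sur $\Q$. Donc $g = 0$ entra\^{i}ne $\smod_\pp^0(\Q) = 0$ et \emph{a fortiori} $\elemenroul = 0$, ce qui donne la contrapos\'{e}e.

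Il reste l'implication non triviale (\ref{enu-enonnul-iii})$\Rightarrow$(\ref{enu-enonnul-i}), et c'est l\`{a} que le th\'{e}or\`{e}me~\ref{th-indlinopHeckeene} intervient de fa\c{c}on essentielle. Sous l'hypoth\`{e}se $\deg \pp \geq 3$, la partie enti\`{e}re $r = \lfloor (\deg \pp - 3)/2 \rfloor$ est bien $\geq 0$, et le th\'{e}or\`{e}me~\ref{th-indlinopHeckeene} fournit alors l'ind\'{e}pendance lin\'{e}aire sur $\Z$ de la famille $\{ T_\mm \elemenroul \}_{\deg \mm \leq r}$. Le seul id\'{e}al de $A$ de degr\'{e} $0$ \'{e}tant $A$ lui-m\^{e}me avec $T_A = \mathrm{Id}$, en prenant $r=0$ on obtient en particulier que $\{ \elemenroul \}$ est libre, donc $\elemenroul \neq 0$.

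Aucune \'{e}tape ne pr\'{e}sente de difficult\'{e} r\'{e}elle~: tout le travail a \'{e}t\'{e} accompli dans les \'{e}nonc\'{e}s pr\'{e}c\'{e}dents, et il ne s'agit ici que d'un corollaire de mise en forme du th\'{e}or\`{e}me~\ref{th-indlinopHeckeene} combin\'{e} avec le calcul explicite du genre.
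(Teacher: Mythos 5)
Votre preuve est correcte et suit exactement la m\^{e}me d\'{e}marche que celle de l'article~: l'\'{e}quivalence (\ref{enu-enonnul-ii})$\iff$(\ref{enu-enonnul-iii}) par la formule~\eqref{eq-genre}, l'implication (\ref{enu-enonnul-i})$\Rightarrow$(\ref{enu-enonnul-ii}) par $\dim_{\Q}\smod_\pp^0(\Q)=g$, et (\ref{enu-enonnul-iii})$\Rightarrow$(\ref{enu-enonnul-i}) par le th\'{e}or\`{e}me~\ref{th-indlinopHeckeene} appliqu\'{e} \`{a} $r=0$. Rien \`{a} signaler.
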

\begin{proof}
L'\'{e}quivalence de \ref{enu-enonnul-ii} et \ref{enu-enonnul-iii} d\'{e}coule de la formule \eqref{eq-genre} qui donne le genre en fonction de $\deg \pp$. Le th\'{e}or\`{e}me~\ref{th-indlinopHeckeene} pour $r=0$ d\'{e}montre \ref{enu-enonnul-iii} $\Rightarrow$ \ref{enu-enonnul-i}. Enfin, l'implication \ref{enu-enonnul-i} $\Rightarrow$ \ref{enu-enonnul-ii} vient du fait que la dimension de $\smod_\pp^{0}(\Q)$ est $g$.
\end{proof}

\subsection{Non-annulation de fonctions \texorpdfstring{$L$}{L} de formes automorphes}

On rappelle quelques r\'{e}sultats sur l'alg\`{e}bre de Hecke issus de la th\'{e}orie des formes automorphes. Comme $\faut$ est libre de type fini sur $\Z$ et qu'on peut voir $\TT$ comme une sous-alg\`{e}bre de $\End(\faut)$, le $\Z$-module $\TT$ est libre de type fini. Soit $\ensnewforms$ l'ensemble des formes primitives de $\faut(\C)$ (on l'a not\'{e} $\ensnewforms_\pp$ dans l'introduction). On a suppos\'{e} $\pp$ premier donc elles constituent une base de $\faut(\C)$. Le groupe de Galois absolu de $\Q$ op\`{e}re sur $\ensnewforms$ \emph{via} son action sur les coefficients de Fourier. Notons $\ensorbites$ l'ensemble des orbites pour cette action. Pour $F \in \ensnewforms$, soient $[F]$ l'orbite et $a_{[F]}$ l'id\'{e}al annulateur de $F$ dans $\TT$ (il ne d\'{e}pend que de $[F]$). L'application $[F] \mapsto a_{[F]}$ est une bijection entre $\ensorbites$ et l'ensemble des id\'{e}aux premiers minimaux de $\TT$. Soit $K_{F}$ le corps de nombres totalement r\'{e}el engendr\'{e} par les coefficients de Fourier 
de $F$. Le degr\'{e} de $K_{F}$ sur $\Q$ co\"{i}ncide avec 
le cardinal de l'orbite $[F]$. L'homomorphisme d'anneaux
\[
 \begin{array}{rcl}
  \TT & \longrightarrow & K_{F} \\
t & \longmapsto & \frac{tF}{F}
 \end{array}
\]
est de noyau $a_{[F]}$. Il induit un isomorphisme de $\Q$-alg\`{e}bres $(\TT / a_{[F]}) \otimes_{\Z} \Q \simeq K_{F}$. Le morphisme canonique de $\TT$-modules $\varphi : \TT \to \prod_{[F] \in \ensorbites} \TT /a_{[F]}$ est injectif et son image est d'indice fini. Donc la $\Q$-alg\`{e}bre $\TT \otimes_{\Z} \Q$ est semi-simple et isomorphe au produit des $K_{F}$ pour $[F] \in \ensorbites$. En particulier, l'alg\`{e}bre $\TT$ est de rang $g$ sur $\Z$.

\begin{lem}\phantomsection\label{lem-dimensionnewf}
Le $\Z$-module $\TT \elemenroul$ est libre de rang $\# \{ F \in \ensnewforms \mid L(F,1) \neq 0 \}$.
\end{lem}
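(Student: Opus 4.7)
Le plan consiste \`{a} r\'{e}duire le probl\`{e}me au calcul de $\dim_{\Q}(\TT\elemenroul \otimes_{\Z} \Q)$, puis \`{a} diagonaliser l'action de Hecke gr\^{a}ce \`{a} l'accouplement avec les formes primitives. Tout d'abord, $\TT\elemenroul$ \'{e}tant contenu dans le $\Q$-espace vectoriel $\smod_\pp^{0}(\Q)$, il est sans torsion sur $\Z$, donc libre, et son rang co\"{i}ncide avec $\dim_{\Q}(\TT\elemenroul \otimes_{\Z} \Q)$, quantit\'{e} inchang\'{e}e apr\`{e}s extension des scalaires \`{a} $\C$.

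Le point central de la preuve sera d'identifier $\smod_\pp^{0}(\C)$ avec $\C^{\ensnewforms}$ \emph{via} $m \mapsto (\langle m,F \rangle)_{F \in \ensnewforms}$, isomorphisme bien d\'{e}fini par perfection de l'accouplement $\langle \cdot , \cdot \rangle$ (th\'{e}or\`{e}me~\ref{th-accouplementsm}) et diagonalisabilit\'{e} de $\faut_\pp(\C)$ dans la base $\ensnewforms$. La compatibilit\'{e} de $\langle \cdot , \cdot \rangle$ aux $T_{\mm}$ entra\^{i}nera alors que chaque $T_{\mm}$ agit sur $\C^{\ensnewforms}$ composante par composante par les valeurs propres $(\lambda_F(\mm))_{F}$, o\`{u} $\lambda_F(\mm)$ d\'{e}signe la valeur propre de $F$ pour $T_\mm$. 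D'apr\`{e}s \eqref{eq-LF1e}, l'\'{e}l\'{e}ment $\elemenroul$ se lira quant \`{a} lui $((q-1) L(F,1))_{F \in \ensnewforms}$.

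La conclusion se ram\`{e}nera alors \`{a} un calcul dans $\C^{\ensnewforms}$, combin\'{e} \`{a} la d\'{e}composition $\TT \otimes_{\Z} \Q \simeq \prod_{[F] \in \ensorbites} K_F$ d\'{e}j\`{a} rappel\'{e}e, o\`{u} chaque $K_F$ se plonge dans $\C^{[F]}$ via ses conjugu\'{e}s galoisiens. Dans chaque bloc $[F]$, l'image de l'action de $K_F$ sur $\elemenroul$ sera nulle si $L(F,1)=0$ (condition invariante sous Galois sur l'orbite $[F]$), et de dimension $[K_F : \Q] = \#[F]$ sur $\Q$ sinon (elle y sera libre de rang $1$ sur $K_F$, puisque multiplication par un \'{e}l\'{e}ment non nul de $K_F$). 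En sommant sur les orbites, on obtiendra
\[
\sum_{[F] \in \ensorbites,\; L(F,1) \neq 0} \#[F] = \#\{ F \in \ensnewforms \mid L(F,1) \neq 0 \},
\]
qui est l'\'{e}galit\'{e} voulue. Le point \`{a} manipuler avec soin sera la coh\'{e}rence entre les diverses identifications (accouplement de Teitelbaum, action galoisienne sur les coefficients de Fourier, d\'{e}composition spectrale de $\TT$), mais cette articulation ne devrait pas pr\'{e}senter de difficult\'{e} majeure.
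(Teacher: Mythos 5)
Votre preuve est correcte et repose sur les m\^{e}mes ingr\'{e}dients que celle de l'article : libert\'{e} par absence de torsion, perfection de l'accouplement, base de formes primitives $\ensnewforms$, formule $\langle \elemenroul, F \rangle = (q-1)L(F,1)$, d\'{e}composition $\TT \otimes_{\Z} \Q \simeq \prod_{[F]} K_F$ et invariance galoisienne de la condition $L(F,1) \neq 0$ sur chaque orbite. La seule diff\'{e}rence est d'organisation : vous calculez directement la dimension de l'image $(\TT \otimes_{\Z} \C)\elemenroul$ dans $\C^{\ensnewforms}$ via la diagonalisation de Hecke, tandis que l'article d\'{e}termine l'annulateur $I_{\elemenroul} = \bigcap_{[F],\, L(F,1)\neq 0} a_{[F]}$ puis le rang du quotient $\TT / I_{\elemenroul} \simeq \TT\elemenroul$ --- deux points de vue \'{e}quivalents pour un module cyclique, la variante de l'article fournissant en prime la description explicite de $I_{\elemenroul}$ r\'{e}utilis\'{e}e dans la preuve de la proposition~\ref{prop-basethetapenrouldegre3}.
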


\begin{proof}
Ce module est clairement sans torsion et de type fini, donc libre. Soit $I_{\elemenroul}$ l'id\'{e}al annulateur de $\elemenroul$ dans $\TT$. L'application $t \mapsto t \elemenroul$ de $\TT$ dans $\smod^0(\Q)$ donne un isomorphisme de $\Z$-modules $\TT / I_{\elemenroul} \simeq \TT \elemenroul$. Calculons le rang du quotient.

Notons $\ensorbites_{\elemenroul}$ l'ensemble des orbites $[F] \in \ensorbites$ telles que $L(F,1) \neq 0$ (cette condition ne d\'{e}pend que de $[F]$). On commence par montrer
\begin{equation}\label{eq-descriptionIe}
 \bigcap_{[F] \in \ensorbites_{\elemenroul}} a_{[F]} = I_{\elemenroul}.
\end{equation}
Soit $t$ dans l'intersection. Pour tout $F \in \ensnewforms$ v\'{e}rifiant $L(F,1)\neq 0$, on a $\langle t \elemenroul,F \rangle = \langle \elemenroul, tF \rangle =0$. Par ailleurs, si $F \in \ensnewforms$ v\'{e}rifie $L(F,1) =0$, on a $\langle \elemenroul,F \rangle=0$ par la formule~\eqref{eq-LF1e}. Donc $\langle t\elemenroul, F \rangle = 0$ car $F$ est propre. Ainsi $t \elemenroul$ est orthogonal \`{a} $\ensnewforms$ et, comme l'accouplement est parfait, on en d\'{e}duit $t \elemenroul=0$. Cela d\'{e}montre une inclusion. Pour l'autre, prenons $t$ dans l'annulateur de $\elemenroul$ et $F \in \ensnewforms$ v\'{e}rifiant $L(F,1) \neq 0$. On a $\langle \elemenroul,tF \rangle = \langle t\elemenroul ,F \rangle = 0$. Comme $F$ est propre et $\langle \elemenroul,F \rangle\neq 0$, on en d\'{e}duit $tF=0$. Donc $t$ appartient \`{a} $a_{[F]}$ pour tout $[F] \in \ensorbites_{\elemenroul}$.

L'homomorphisme canonique de $\Z$-modules $\TT \rightarrow \prod_{[F] \in \ensorbites_{\elemenroul}} \TT / a_{[F]}$ est de noyau $I_{\elemenroul}$ d'apr\`{e}s ce qui pr\'{e}c\`{e}de, et son image est d'indice fini car il en est de m\^{e}me de $\varphi$. Donc le $\Q$-espace vectoriel $(\TT / I_{\elemenroul}) \otimes_{\Z} \Q$ est isomorphe \`{a} $\prod_{[F] \in \ensorbites_{\elemenroul}} K_{F}$. Il est de dimension
\[
 \sum_{[F] \in \ensorbites_{\elemenroul}} [K_F:\Q] = \# \{ F \in \ensnewforms \mid L(F,1) \neq 0 \}.
\]
Le $\Z$-module $\TT / I_{\elemenroul}$ a le rang annonc\'{e}.
\end{proof}

On traduit alors l'ind\'{e}pendance lin\'{e}aire du th\'{e}or\`{e}me~\ref{th-indlinopHeckeene} en le r\'{e}sultat annonc\'{e} de non-annulation de fonctions $L$.

\begin{proof}[Preuve du th\'{e}or\`{e}me~\ref{theo-minorationfaut}]
Le $\Z$-module $\TT \elemenroul$ est de rang $\# \{ F \in \ensnewforms \mid L(F,1) \neq 0 \}$ par le lemme~\ref{lem-dimensionnewf}. Par ailleurs, d'apr\`{e}s le th\'{e}or\`{e}me~\ref{th-indlinopHeckeene}, il est de rang au moins \'{e}gal au nombre de polyn\^{o}mes unitaires de $A$ de degr\'{e} $\leq r$ c'est-\`{a}-dire $\frac{q^{r+1}-1}{q-1}$. Enfin, la formule \eqref{eq-genre} pour $\# \ensnewforms=g$ donne les in\'{e}galit\'{e}s $\frac{q^{r+1}-1}{q-1} \geq q^{r} \geq (q^2-1)^{1/2} g^{1/2} / q^2$.
\end{proof}

\section{L'analogue de l'homomorphisme d'enroulement de Mazur}

On travaille encore dans $\smod_\pp$ avec $\pp$ premier. La d\'{e}finition suivante pourra \^{e}tre compar\'{e}e \`{a} celles de Mazur \cite{mazur-eisenstein} et P\'{a}l \cite[rem.~5.7]{pal-ontheeisensteinideal}.

\begin{defi}L'\emph{homomorphisme d'enroulement} est l'homomorphisme de $\TT$-modules
\[
 \begin{array}{rcl}
  I_{E} & \longrightarrow & \overline{\smod^{0}} \\
 t & \longmapsto & \frac{t \elemenroul}{q-1}.
 \end{array}
\]
\end{defi}
D'apr\`{e}s le lemme~\ref{lem-homenroulement}, il est bien d\'{e}fini et l'involution $w_{\pp}$ op\`{e}re par $-1$ sur son image car $w_{\pp}\left[0,\infty\right] =\left[\infty,0\right]$

\subsection{Homomorphisme d'enroulement en degr\'{e} \texorpdfstring{$3$}{3}}
Si $\pp$ est de degr\'{e} $3$, cet homomorphisme conjointement \`{a} la base explicite permet de d\'{e}crire la structure du $\TT$-module des symboles modulaires paraboliques. L'\'{e}nonc\'{e} qui suit peut \^{e}tre rapproch\'{e} de \cite[th.~18.10]{mazur-eisenstein} pour l'homomorphisme d'enroulement classique localis\'{e} en un nombre premier d'Eisenstein.

\begin{prop}\phantomsection\label{prop-basethetapenrouldegre3}
Soit $\pp$ premier de degr\'{e} $3$. Les symboles modulaires $\eta_\nn \elemenroul /(q-1)$, pour $\deg \nn = 1$, forment une base de $\smod_\pp^{0}$ sur $\Z$. L'homomorphisme d'enroulement est un isomorphisme de $\TT$-modules $I_{E} \simeq \smod_\pp^{0}$.
\end{prop}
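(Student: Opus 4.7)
Le plan repose sur une identification explicite de $\eta_\nn \elemenroul / (q-1)$ pour $\deg \nn = 1$ et sur l'application du th\'{e}or\`{e}me~\ref{theo-baseexplicite} au cas $d=3$. Soit $n$ le g\'{e}n\'{e}rateur unitaire de $\nn$. Le calcul effectu\'{e} dans la preuve du lemme~\ref{lem-homenroulement} donne
\[
\eta_\nn \elemenroul = \eta_\nn [0,\infty] = (q-1) \sum_{\substack{\deg b < 1 \\ b\ \text{unitaire}}} [b/n,0] = (q-1)\, [1/n, 0],
\]
la seule valeur admissible de $b$ \'{e}tant $b=1$. Par ailleurs, en relevant $(n:1)$ par la matrice $\matrice{1}{0}{n}{1} \in \G(A)$ (de d\'{e}terminant $1$), on trouve $\xi(n:1) = [0, 1/n] = -[1/n,0]$, d'o\`{u} $\eta_\nn \elemenroul / (q-1) = -\xi(n:1)$.

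Lorsque $d=\deg\pp=3$, la condition $\deg v < \deg u < 3/2$ du th\'{e}or\`{e}me~\ref{theo-baseexplicite} force $\deg u = 1$ et $v=1$. La base de $\smod_\pp^0$ fournie par ce th\'{e}or\`{e}me est donc exactement $\{ \xi(u:1) : u\ \text{unitaire de degr\'{e}}\ 1\}$, qui poss\`{e}de $q$ \'{e}l\'{e}ments (ce qui recoupe l'identit\'{e} $g=q$ donn\'{e}e par \eqref{eq-genre}). Au signe pr\`{e}s, c'est la famille $\{ \eta_\nn \elemenroul / (q-1) \}_{\deg \nn = 1}$, qui est donc une base de $\smod_\pp^0$ sur $\Z$. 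Cela \'{e}tablit la premi\`{e}re assertion~; noter aussi que $\pp$ \'{e}tant de degr\'{e} impair, $\smod_\pp^0$ est sans torsion, de sorte que $\overline{\smod_\pp^0} = \smod_\pp^0$ et que l'homomorphisme d'enroulement atterrit bien dans $\smod_\pp^0$.

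Pour la seconde assertion, notons $\psi : I_E \to \smod_\pp^0$ l'homomorphisme d'enroulement~; il est $\TT$-lin\'{e}aire par construction. Son image contient une $\Z$-base de $\smod_\pp^0$ d'apr\`{e}s ce qui pr\'{e}c\`{e}de, donc $\psi$ est surjectif. Par le th\'{e}or\`{e}me~\ref{theo-pal-heckemoduloeisenstein}, $\TT / I_E$ est fini, donc $I_E$ est un sous-$\Z$-module de $\TT$ de m\^{e}me rang $g$ ; comme $I_E$ est sans torsion (car $\TT \subset \End(\faut(\C))$ l'est), c'est un $\Z$-module libre de rang $g=q$. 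Une surjection $\psi : \Z^g \twoheadrightarrow \Z^g$ \'{e}tant automatiquement un isomorphisme (le noyau \'{e}tant de rang nul et inclus dans un module libre), on conclut que $\psi$ est un isomorphisme de $\TT$-modules.

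L'\'{e}nonc\'{e} ne pr\'{e}sente pas de v\'{e}ritable obstacle : une fois acquis le calcul pr\'{e}paratoire liant $\eta_\nn \elemenroul$ \`{a} $\xi(n:1)$ (d\'{e}j\`{a} illustr\'{e} pour $q=2$ dans l'exemple de la section~\ref{soussousection-exemplecalcule}), tout d\'{e}coule par comparaison directe des bases et par un argument standard de rang.
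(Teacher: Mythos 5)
Your proof is correct, and its first half (the computation $\eta_\nn\elemenroul/(q-1)=-\xi(n:1)$ via the proof of Lemma~\ref{lem-homenroulement}, the observation that for $d=3$ the basis of Theorem~\ref{theo-baseexplicite} reduces exactly to $\{\xi(u:1)\}_{u\ \mathrm{unitaire},\ \deg u=1\}$, and the resulting surjectivity) is the same as the paper's. Where you genuinely diverge is the injectivity step. You argue by rank: $\TT/I_E$ is finite (P\'{a}l), so $I_E$ is a torsion-free $\Z$-module of rank $g=q$, as is $\smod_\pp^0$, and a surjection between free $\Z$-modules of the same finite rank is automatically injective. This is valid and more elementary. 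The paper instead identifies the kernel as $I_{\elemenroul}\cap I_E$ and proves the stronger fact that the annihilator $I_{\elemenroul}$ of $\elemenroul$ in $\TT$ is \emph{zero} when $\deg\pp=3$: by Proposition~\ref{prop-lmellin}, $L(F,s)$ is a nonzero polynomial in $q^{-s}$ of degree $\leq 0$, hence a nonzero constant, so $L(F,1)\neq 0$ for every primitive $F$; by \eqref{eq-descriptionIe}, $I_{\elemenroul}$ is then the intersection of all minimal primes of the reduced algebra $\TT$, hence trivial. Your route buys brevity and avoids any analytic input; the paper's route costs a little more but yields, as a by-product, the non-vanishing of all the $L(F,1)$ and the faithfulness of the $\TT$-action on $\elemenroul$ in degree $3$, which is of independent interest (and consistent with the subsequent corollary). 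Both are complete proofs of the stated proposition.
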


\begin{proof}
Comme $\pp$ est premier de degr\'{e} impair, la torsion de $\smod_\pp$ est nulle. L'homomorphisme d'enroulement est donc \`{a} valeurs dans $\overline{\smod_\pp^{0}} = \smod_\pp^0$ et le lemme~\ref{lem-homenroulement} assure que $\eta_\nn \elemenroul = \eta_\nn \left[0,\infty\right]$ dans $\smod_\pp^0$. Soit $n$ le polyn\^{o}me unitaire de degr\'{e}~$1$ engendrant $\nn$. La preuve du lemme~\ref{lem-homenroulement} donne $\eta_\nn[0,\infty] = -(q-1)\xi(n:1)$. De la base du th\'{e}or\`{e}me~\ref{theo-basesm-intro} on d\'{e}duit alors la premi\`{e}re affirmation de l'\'{e}nonc\'{e}. L'homomorphisme d'enroulement, dont l'image contient une base de $\smod_\pp^0$ sur $\Z$, est donc surjectif. Pour l'injectivit\'{e}, il reste \`{a} voir que $I_{\elemenroul} \cap I_{E} = \{ 0 \}$. En fait, l'id\'{e}al $I_{\elemenroul}$ est nul pour $\pp$ premier de degr\'{e} $3$. En effet, si $F$ est primitive, la fonction $L(F,s)$ est alors un polyn\^{o}me non nul en $q^{-s}$ de degr\'{e} $\leq 0$, donc une constante non nulle (cf. 
proposition~\ref{prop-lmellin}). D'apr\`{e}s la description donn\'{e}e 
en \eqref{eq-descriptionIe}, l'id\'{e}al $I_{\elemenroul}$ est alors l'intersection de tous les id\'{e}aux premiers minimaux de $\TT$, donc nul.
\end{proof}

\begin{cor}
Soit $\pp$ premier de degr\'{e} $3$. Les $\TT / p \TT$-modules $\smod_\pp^{0}/p \smod_\pp^{0}$ et $\faut_\pp / p \faut_\pp$ sont libres de rang $1$. En particulier, l'action de $\TT / p \TT$ sur $\faut_\pp / p \faut_\pp$ est fid\`{e}le.
\end{cor}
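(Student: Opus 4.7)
The plan is to combine the isomorphism from Proposition~\ref{prop-basethetapenrouldegre3} with the structure result of P\'{a}l (Th\'{e}or\`{e}me~\ref{theo-pal-heckemoduloeisenstein}), and then transport the result from $\smod_\pp^0$ to $\faut_\pp$ via Lemme~\ref{lem-isomfautsmod}.

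First, I would use the fact that $p \nmid \ordreeisenstein(\pp)$ for $\pp$ of degree $3$. Indeed, $\ordreeisenstein(\pp) = (q^3-1)/(q-1) = q^2+q+1$ by Th\'{e}or\`{e}me~\ref{theo-pal-heckemoduloeisenstein}, and since $q$ is a power of $p$ we have $q^2+q+1 \equiv 1 \pmod{p}$. So $\TT/I_E$ is a finite group of order prime to $p$. Tensoring the short exact sequence
\[
0 \longrightarrow I_E \longrightarrow \TT \longrightarrow \TT / I_E \longrightarrow 0
\]
with $\Z/p\Z$ gives the long exact sequence
\[
\mathrm{Tor}_1^{\Z}(\TT/I_E, \Z/p\Z) \longrightarrow I_E / p I_E \longrightarrow \TT / p \TT \longrightarrow (\TT/I_E)/p(\TT/I_E) \longrightarrow 0,
\]
whose extreme terms vanish because $\TT/I_E$ is a finite abelian group of order coprime to $p$. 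Hence the natural map $I_E / p I_E \to \TT / p \TT$ is an isomorphism of $\TT$-modules (and in particular of $\TT/p\TT$-modules).

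Next, by Proposition~\ref{prop-basethetapenrouldegre3} the enrollment homomorphism yields a $\TT$-linear isomorphism $I_E \simeq \smod_\pp^{0}$, so after reduction modulo $p$
\[
\smod_\pp^{0} / p \smod_\pp^{0} \simeq I_E / p I_E \simeq \TT / p \TT
\]
as $\TT/p\TT$-modules, which shows that $\smod_\pp^{0} / p \smod_\pp^{0}$ is free of rank $1$. To transfer this to cocha\^{i}nes, I would invoke Lemme~\ref{lem-isomfautsmod} which provides a canonical $\TT$-equivariant isomorphism $\overline{\smod_\pp^0} \simeq \faut_\pp$; since $\pp$ is premier of odd degree $3$, the torsion of $\smod_\pp$ (hence of $\smod_\pp^0$) is nulle, so $\overline{\smod_\pp^0} = \smod_\pp^0$ and the isomorphism descends to $\faut_\pp / p \faut_\pp \simeq \smod_\pp^0 / p \smod_\pp^0 \simeq \TT / p \TT$.

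Finally, the faithfulness statement is immediate: a ring acts faithfully on any free module of rang $1$ over itself, since the kernel of the action on a free generator is zero. There is no real obstacle here; the only subtlety to watch is the coprimality of $p$ and $\ordreeisenstein(\pp)$, which is what permits the reduction mod $p$ of the exact sequence to give an isomorphism rather than just a surjection, and the torsion-freeness of $\smod_\pp$ for $\deg \pp$ odd, which is needed to identify $\overline{\smod_\pp^0}$ with $\smod_\pp^0$.
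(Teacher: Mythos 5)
Votre preuve est correcte et suit essentiellement la m\^{e}me route que l'article~: on se ram\`{e}ne, via la proposition~\ref{prop-basethetapenrouldegre3} et le lemme~\ref{lem-isomfautsmod}, \`{a} montrer que $I_{E}/pI_{E} \simeq \TT/p\TT$, ce qui d\'{e}coule du fait que $\TT/I_{E}$ est fini d'ordre premier \`{a} $p$ (th\'{e}or\`{e}me~\ref{theo-pal-heckemoduloeisenstein}). La seule diff\'{e}rence est cosm\'{e}tique~: l\`{a} o\`{u} vous invoquez la suite exacte longue de $\mathrm{Tor}$ pour obtenir d'un coup la surjectivit\'{e} et l'injectivit\'{e} de $I_{E}/pI_{E} \to \TT/p\TT$, l'article proc\`{e}de \`{a} la main (th\'{e}or\`{e}me de Krull pour \'{e}tablir $p\TT + I_{E} = \TT$, puis v\'{e}rification directe de $I_{E} \cap p\TT = pI_{E}$).
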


\begin{proof}
Les deux modules sont isomorphes~: cela provient de l'isomorphisme de $\TT$-modules $\alpha : \smod_\pp^0 \simfleche \faut_\pp$. De plus, par la proposition~\ref{prop-basethetapenrouldegre3}, ils sont aussi isomorphes \`{a} $I_{E} / p I_{E}$.

Calculons le rang de $I_{E} / p I_{E}$ sur $\TT / p\TT$. Le groupe $\TT / I_{E}$ est fini d'ordre premier \`{a} $p$, d'apr\`{e}s P\'{a}l (th\'{e}or\`{e}me~\ref{theo-pal-heckemoduloeisenstein}). On en d\'{e}duit $p \TT + I_{E} = \TT$. En effet, si ce n'est pas le cas, d'apr\`{e}s le th\'{e}or\`{e}me de Krull, $I_{E}$ et $p\TT$ sont contenus dans un id\'{e}al maximal $\mathcal{M}$ de $\TT$. On aurait une surjection canonique de $\TT / I_{E}$ dans le corps $\TT / \mathcal{M}$ de caract\'{e}ristique $p$. Donc $\TT / \mathcal{M}$ serait fini d'ordre premier \`{a} $p$, ce qui est contradictoire. Les id\'{e}aux $p \TT$ et $I_{E}$ \'{e}tant \'{e}trangers, l'inclusion $I_{E} \hookrightarrow \TT$ induit une surjection de $\TT/p \TT$-modules $I_{E} / p I_{E} \to \TT / p\TT$. On propose de voir que $I_{E} \cap p\TT = pI_{E}$, ce qui prouvera que cette surjection est bijective (et $I_{E} / p I_{E}$ sera alors de rang $1$). Consid\'{e}rons un \'{e}l\'{e}ment $x=p t$ de $I_{E} \cap p\TT$, avec $t$ dans $\TT$. Comme $x$ est 
d'image nulle dans $\TT/I_E$, l'image de $t$ est d'ordre $1$ ou $p$ dans ce quotient. Or, le groupe ab\'{e}lien $\TT/ I_E$ est d'ordre premier \`{a} $p$. Donc $t$ appartient n\'{e}cessairement \`{a} $I_E$. Cela d\'{e}montre $I_E \cap p\TT \subset pI_E$. L'autre inclusion est imm\'{e}diate et conclut la d\'{e}monstration.
\end{proof}

\subsection{En degr\'{e} sup\'{e}rieur}
On peut voir que l'homomorphisme d'enroulement n'est plus surjectif d\`{e}s que $d = \deg \pp \geq 4$. En effet notons $V$ son image, qui est toujours contenue dans le sous-espace propre de $w_{\pp \vert \overline{\smod_\pp^{0}}}$ pour la valeur propre $-1$. Si $V = \overline{\smod_\pp^0}$ alors l'involution $w_{\pp}$ agit comme $-1$ sur $\overline{\smod_\pp^0}$, donc sur $\faut_\pp$ (lemme~\ref{lem-isomfautsmod}). Soit $w$ l'involution d'Atkin--Lehner de la courbe modulaire de Drinfeld $X$ associ\'{e}e \`{a} $\Gamma_0(\pp)$ (cf. \cite{gekeler-uberdrinfeldschehecke}). Elle induit un automorphisme de la jacobienne de $X$, d\'{e}fini sur $K$, qui serait alors $-\mathrm{id}$. La courbe $X$ serait hyperelliptique. Mais d'apr\`{e}s la classification de Schweizer \cite[th.~20]{schweizer-hypereellipticdmc}, cela ne se produit pas si $d \geq 4$.

Pour $d \geq 3$, on sait que la famille de symboles modulaires $\xi(n:1)$, pour $n$ unitaire de degr\'{e} $1$, est libre (th\'{e}or\`{e}me~\ref{theo-baseexplicite}). Comme dans la preuve de la proposition~\ref{prop-basethetapenrouldegre3}, on en d\'{e}duit que $\Q \cdot V$ est toujours de dimension $\geq q$. On termine par un exemple pour $d=4$ o\`{u} cette minoration est optimale.

\begin{exemple}[$q=2$, $\pp=(T^4+T+1)$ id\'{e}al premier]
Dans la base suivante de $\smod_\pp^0(\Q)$
\[
\{ \xi(T:1),\xi(T+1:1),\xi(T^2:1),\xi(T^2+1:1) \},
\]
la matrice de $w_{\pp}$ est
\[
 \left(\begin{array}{rrrr} 
-1 & 0 & 0 & -1 \\
0 & -1 & -1 & 0 \\
0 & 0 & 1 & 0 \\
0 & 0 & 0 & 1
\end{array}
\right).
\]
Le sous-espace propre de $w_{\pp}$ pour la valeur propre $-1$ est de dimension $2$ et $\dim_{\Q} \Q \cdot V = 2$.
\end{exemple}

\paragraph{Remerciements} Ce travail s'est d\'{e}velopp\'{e} \`{a} partir de ma th\`{e}se de doctorat pr\'{e}par\'{e}e  \`{a} l'Universit\'{e} Paris~7. Il a \'{e}t\'{e} compl\'{e}t\'{e} lors de s\'{e}jours \`{a} l'Institut des Hautes \'{E}tudes Scientifiques et au Max-Planck-Institut f\"{u}r Mathematik, que je remercie pour leur hospitalit\'{e}. Je suis tr\`{e}s reconnaissante \`{a} Lo\"{i}c Merel pour ses remarques et un argument crucial dans la section~\ref{section-baseexplicite}. Enfin le rapporteur, par sa relecture minutieuse et ses nombreux commentaires, a sensiblement am\'{e}lior\'{e} la pr\'{e}sentation de ce travail~: qu'il en soit chaleureusement remerci\'{e}.

\end{document}